\long\def\mOne#1{#1}
\long\def\mTwo#1{#1}
\long\def\mMy#1{#1}
\newtheorem{theorem}{Theorem}[section]
\newtheorem{lemma}[theorem]{Lemma}
\theoremstyle{definition}
\newtheorem{remark}[theorem]{Remark}
\newdefinition{definition}{Definition}
\newdefinition{example}{Example}[section]
\journal{Journal of Computational Physics}
\begin{document}
\begin{frontmatter}
\title{Finite element method for nonlinear Riesz space fractional diffusion equations on 
irregular domains} 
\author[nwpu]{Z. Yang}
\ead{zzyang@mail.nwpu.edu.cn}
\author[nwpu]{Z. Yuan}
\author[nwpu]{Y. Nie\corref{cor1}}
\ead{yfnie@nwpu.edu.cn}
\author[nwpu]{J. Wang}
\author[nwpu]{X. Zhu}
\author[qut]{F. Liu} 

\cortext[cor1]{Corresponding author}
\address[nwpu]{
  Research Center for Computational Science,
  Northwestern Polytechnical University, 
  Xi'an, Shaanxi 710072, China}
\address[qut]{Mathematical Sciences School, Queensland University of Technology, 
QLD.4001, Australia}
\begin{abstract}
In this paper, we consider two-dimensional Riesz space fractional diffusion
equations with nonlinear source term on convex domains. Applying Galerkin
finite element method in space and backward difference method in time, we
present a fully discrete scheme to solve Riesz space fractional diffusion
equations. Our breakthrough is developing an algorithm to form stiffness matrix
on unstructured triangular meshes, which can help us to deal with space
fractional terms on any convex domain. The stability and convergence of the
scheme are also discussed. Numerical examples are given to verify accuracy and
stability of our scheme.
\end{abstract}

\begin{keyword}
  finite element method \sep Riesz fractional derivative \sep nonlinear source term \sep irregular domain 
  
\end{keyword}
\end{frontmatter}

\section[Introduction]{Introduction}\label{sec:introduciton}
In recent years, fractional calculus is becoming more and more popular among
various fields due mainly to its widely applications in science and
engineering, see
\cite{Gorenflo2001,Zaslavsky2002,Podlubny1998,Metzler2000,Kilbas2006}.  In
physics, space fractional derivatives are used to model anomalous diffusion
(super-diffusion and sub-diffusion). In water resources, fractional models are
used to describe chemical and pollute transport in heterogeneous aquifers
\cite{Benson2000}.

Owing to fractional differential equations' various applications,
seeking effective methods to solve them is becoming more and more important. 
There are a
large volume of literatures available on this subject. Researchers have
presented many analytical techniques for solving fractional differential
equations, such as Fourier transform method, Laplace transform method, Mellin
transform method, and Green function method \cite{Kilbas2006}. 
However, it is difficult to find the close
forms of most fractional differential equations, and the close forms are always
represented by special functions, such as Mittag-Leffler function, which means
they are difficult to represent simply and compute directly.  Moreover, most
nonlinear equations are not solvable by analytical methods, so researchers have
to resort to numerical methods.

Over the last few decades, many classical numerical methods have been extended
to solve fractional differential equations, such as finite difference
method~\cite{Liu2013,Zhuang2008,Meerschaert2004,Wang2012}, finite element
method~(FEM)~\cite{Roop2006,Ervin2006,Ervin2007,Jin2015}, and spectral
method~\cite{Chen2014a,Li2009,Li2012,Bueno-Orovio2014}. 

\mTwo{ 
As an efficient method widely used in engineering design and analysis, 
FEM has been deeply studied by a number of scholars to solve fractional 
differential equations.
Ervin and Roop \cite{Ervin2007} defined directional
integrals and directional derivatives, and developed a theoretical framework
for the variational problem of the steady state fractional
advection-dispersion equation on bounded domains in $\mathbb{R}^d$.  Deng
\cite{Deng2009} investigated FEM for the one-dimensional
space and time fractional Fokker-Planck equation. In \cite{Zhang2010}, adopting
FEM, Zhang, Liu and Anh solved one-dimensional symmetric
space-fractional differential equations.  Zhang and Deng \cite{Zhang2012a}
proposed FEM for two-dimensional fractional diffusion equations with time
fractional derivative.  In \cite{Choi2012}, the authors considered FEM for the
space fractional diffusion equation on domains in $\mathbb{R}$.  Deng and
Hesthaven \cite{Deng2013b} proposed a local discontinuous Galerkin method for
the fractional diffusion equation, and offered stability analysis and error
estimates.  Wang and Yang \cite{Wang2013} derived a Petrov-Galerkin weak
formulation to the fractional elliptic differential equation and proved that
the bilinear form is weakly coercive.  Bu et al.\ \cite{Bu2014, Bu2015,
Bu2015a} considered two-dimensional space fractional diffusion equations on
rectangle domains solved by FEM. In \cite{Qiu2015}, Qiu et al. developed
nodal discontinuous Galerkin methods for fractional diffusion equations on 2D
irregular domains and provided stability analysis and error estimates.  Du and
Wang \cite{Du2015} introduced a fast FEM for 2D space-fractional dispersion
equations  by exploiting the structure of stiffness matrix for rectangular mesh
on rectangular domain.
As we can see, many works on FEM are limited in solving
fractional differential equations with linear source term on rectangle domains
with regular meshes. Two-dimensional space fractional problems with nonlinear
source term defined on irregular domains, especially partitioned with
unstructured meshes, are seldom considered, although they are more real and
more useful.}

In this paper, we consider the two-dimensional Riesz space fractional diffusion
equation on convex domain $\Omega$ with initial condition and boundary
condition:
\begin{equation} \label{eq:equation}
  \left\{
  \begin{aligned}
    & \frac{\partial u}{\partial t} = 
      K_x \frac{\partial^{2\alpha}u}{\partial|x|^{2\alpha}} + 
      K_y \frac{\partial^{2\beta}u}{\partial|y|^{2\beta}} + F(u) + f(x, y, t),
      \quad (x,y,t) \in \Omega \times (0, T],\\
    & u(x, y, 0) = \varphi(x, y), \quad (x,y) \in \Omega, \\
  & u(x, y, t) = 0, \quad (x,y,t) \in \partial\Omega \times (0, T],
  \end{aligned}
  \right.
\end{equation}
where $\frac{1}{2} < \alpha,\beta < 1$, 
$K_x > 0, K_y > 0$, and {$F(u)\in C^1(\Theta)$ is a nonlinear function 
($\Theta$ is a proper close domain)}.
Boundaries of $\Omega$ are defined as follows (Fig.~\ref{fig:domain}): 
\begin{figure}
  \centering
  \includegraphics[width=0.3\textwidth]{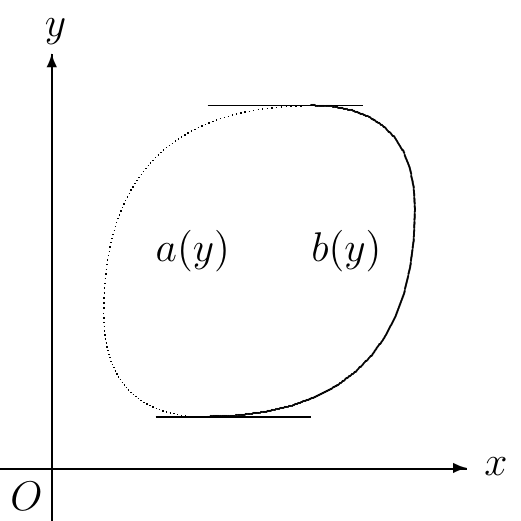}
  \hskip15pt
  \includegraphics[width=0.3\textwidth]{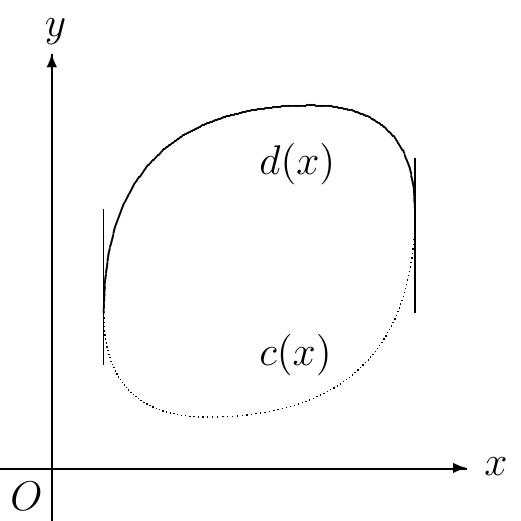}
  \caption{Convex domain $\Omega$ with its boundary $a(y), b(y), c(x), d(x)$.}\label{fig:domain}
\end{figure}
\begin{equation*} \label{eq:boundary}
  \left\{
  \begin{aligned}
    a(y) &= \min \{x: (x, \eta) \in \Omega, \eta=y\}, \\
    b(y) &= \max \{x: (x, \eta) \in \Omega, \eta=y\}, \\
    c(x) &= \min \{y: (\xi, y) \in \Omega, \xi=x\}, \\
    d(x) &= \max \{y: (\xi, y) \in \Omega, \xi=x\}. \\
  \end{aligned}\right.
\end{equation*}
In Eq.~\eqref{eq:equation}, Riesz derivatives \cite{Kilbas2006} $\frac{\partial^{2\alpha}u}{\partial|x|^{2\alpha}}$
and $\frac{\partial^{2\beta}u}{\partial|y|^{2\beta}}$ are defined by
\mTwo{
\begin{equation}
  \begin{aligned}
    \frac{\partial^{2\alpha}u(x,y, t)}{\partial|x|^{2\alpha}} &= 
    - c_{\alpha} \left({}_{a(y)}D_{x}^{2\alpha}u(x,y,t) + {}_{x}D_{b(y)}^{2\alpha}u(x,y,t)\right), \\
    \frac{\partial^{2\beta}u(x,y,t)}{\partial|y|^{2\beta}} &= 
    - c_{\beta} \left({}_{c(x)}D_{y}^{2\beta}u(x,y,t) + {}_{y}D_{d(x)}^{2\beta}u(x,y,t)\right),
  \end{aligned}
\end{equation} }
where $c_{\alpha} = \frac{1}{2\cos(\alpha\pi)}$, $c_{\beta} = \frac{1}{2\cos(\beta\pi)}$,
and the operators ${}_{a(y)}D_{x}^{\mu}u(x,y)$, ${}_{x}D_{b(y)}^{\mu}u(x,y)$, 
${}_{c(x)}D_{y}^{\mu}u(x,y)$, ${}_{y}D_{d(x)}^{\mu}u(x,y)$ ($n-1 < \mu < n, n \in \mathbb{N}$) 
are defined as~\cite{Podlubny1998}
\mTwo{
\begin{equation*}
  \begin{aligned}
    {}_{a(y)}D_{x}^{\mu}u(x,y,t) &= \frac{1}{\Gamma(n-\mu)}\frac{\partial^n}{\partial x^n} \int_{a(y)}^{x}{(x-s)}^{n-\mu-1}u(s,y,t)ds,  \\
    {}_{x}D_{b(y)}^{\mu}u(x,y,t) &= \frac{{(-1)}^n}{\Gamma(n-\mu)}\frac{\partial^n}{\partial x^n} \int_{x}^{b(y)}{(s-x)}^{n-\mu-1}u(s,y,t)ds, \\
    {}_{c(x)}D_{y}^{\mu}u(x,y,t) &= \frac{1}{\Gamma(n-\mu)}\frac{\partial^n}{\partial y^n} \int_{c(x)}^{y}{(y-s)}^{n-\mu-1}u(x,s,t)ds, \\
    {}_{y}D_{d(x)}^{\mu}u(x,y,t) &= \frac{{(-1)}^n}{\Gamma(n-\mu)}\frac{\partial^n}{\partial y^n} \int_{y}^{d(x)}{(s-y)}^{n-\mu-1}u(x,s,t)ds. \\
  \end{aligned}
\end{equation*} }

In this paper, an implicit Galerkin FEM is developed to solve
Eq.~\eqref{eq:equation}, in which the time derivative is discretized by
backward Euler method and the nonlinear term $F(u)$ is approximated by Taylor
formula.  Under suitable conditions, our method is stable and convergent.

The outline of this paper is shown as below. Section~\ref{sec:preliminaries}
gives some notations and lemmas, which will be used later on.  In
Section~\ref{sec:scheme}, we present the backward Euler Galerkin method (BEGM)
and its implementation in detail.  Stability and convergence are investigated in
Section~\ref{sec:stability}.  In Section~\ref{sec:examples}, some numerical
results are tested.  And the last section offers some conclusions on the method and some thoughts
on the future work.

\section[Preliminaries]{Preliminaries}\label{sec:preliminaries}
This section mainly introduce some definitions and lemmas, 
introduced by Ervin and Roop in \cite{Ervin2006,Ervin2007}. We list 
some here for the following sections.  Firstly, we give the
definitions of fractional derivative spaces, 
i.e.\ $J_L^{\mu}(\Omega)$, $J_R^{\mu}(\Omega)$, $J_S^{\mu}(\Omega)$, and
\mTwo{$H^{\mu}(\Omega)$}.
\begin{definition}
  Let $\mu > 0$. Define the seminorm
  \begin{equation*}
    |u|_{J_L^{\mu}(\Omega)} = {\big(
    \|{}_{a(y)}D_{x}^{\mu}u\|^2_{L^2(\Omega)} + 
    \|{}_{c(x)}D_{y}^{\mu}u\|^2_{L^2(\Omega)}  \big)}^{1/2}
  \end{equation*}
  and the norm
  \begin{equation*}
    \|u\|_{J_L^{\mu}(\Omega)} = 
    {\big(\|u\|^2_{L^2(\Omega)} + |u|^2_{J_L^{\mu}(\Omega)}\big)}^{1/2}
  \end{equation*}
  and denote $J_L^{\mu}(\Omega)$ ($J_{L,0}^{\mu}(\Omega)$) as the closure of 
  $C^{\infty}(\Omega)$ ($C^{\infty}_0(\Omega)$) with respect to 
  $\|\cdot\|_{J_L^{\mu}(\Omega)}$.
\end{definition}

\begin{definition}
  Let $\mu > 0$. Define the seminorm
  \begin{equation*}
    |u|_{J_R^{\mu}(\Omega)} = {\big(
    \|{}_{x}D_{b(y)}^{\mu}u\|^2_{L^2(\Omega)} + 
    \|{}_{y}D_{d(x)}^{\mu}u\|^2_{L^2(\Omega)}  \big)}^{1/2}
  \end{equation*}
  and the norm
  \begin{equation*}
    \|u\|_{J_R^{\mu}(\Omega)} = 
    {\big(\|u\|^2_{L^2(\Omega)} + |u|^2_{J_R^{\mu}(\Omega)}\big)}^{1/2}
  \end{equation*}
  and denote $J_R^{\mu}(\Omega)$ ($J_{R,0}^{\mu}(\Omega)$) as the closure of 
  $C^{\infty}(\Omega)$ ($C^{\infty}_0(\Omega)$) with respect to 
  $\|\cdot\|_{J_R^{\mu}(\Omega)}$.
\end{definition}

\begin{definition}
  Let $\mu \neq n - 1/2$, $n\in \mathbb{N}$. Define the seminorm 
  \begin{equation*}
    |u|_{J_S^{\mu}(\Omega)} = {\big(
    |({}_{a(y)}D_{x}^{\mu}u, {}_{x}D_{b(y)}^{\mu}u)| + 
    |({}_{c(x)}D_{y}^{\mu}u, {}_{y}D_{d(x)}^{\mu}u)| 
    \big)}^{1/2}
  \end{equation*}
  and the norm
  \begin{equation*}
    \|u\|_{J_S^{\mu}(\Omega)} = {\big(
      \|u\|^2_{L^2(\Omega)} + |u|^2_{J_S^{\mu}(\Omega)} \big)}^{1/2}
  \end{equation*}
  and denote $J_S^{\mu}(\Omega)$ ($J_{S,0}^{\mu}(\Omega)$) as the closure of 
  $C^{\infty}(\Omega)$ ($C^{\infty}_0(\Omega)$) with respect to 
  $\|\cdot\|_{J_S^{\mu}(\Omega)}$.
\end{definition}

\begin{definition}
  Let $\mu > 0$. Define the seminorm
  \begin{equation*}
    |u|_{H^{\mu}(\Omega)} = \| |\omega|^{\mu}\mathcal{F}(\hat u)(\omega)\|_{L^2(\mathbb{R}^2)}
  \end{equation*}
  and the norm
  \begin{equation*}
    \|u\|_{H^{\mu}(\Omega)} = {\big(
      \|u\|^2_{L^2(\Omega)} + |u|^2_{H^{\mu}(\Omega)} \big)}^{1/2}
  \end{equation*}
  where $\mathcal{F}(\hat u)(\omega)$ is the Fourier transformation of function 
  $\hat u$, $\hat u$ is the zero extension of $u$ outside $\Omega$, 
  and denote $H^{\mu}(\Omega)$ ($H_{0}^{\mu}(\Omega)$) as the closure of 
  $C^{\infty}(\Omega)$ ($C^{\infty}_0(\Omega)$) with respect to 
  $\|\cdot\|_{H^{\mu}(\Omega)}$.
\end{definition}
Based on these definitions, the following lemma shows that the spaces 
$J_{L,0}^{\mu}(\Omega)$, $J_{R,0}^{\mu}(\Omega)$, $J_{S,0}^{\mu}(\Omega)$ 
and $H_0^{\mu}(\Omega)$ are equivalent with equivalent seminorms and norms if 
$\mu \ne n - 1/2$. 
\begin{lemma}[\cite{Ervin2007}]\label{lemma:relation}
  Let $\mu \ne n - 1/2 $ $(n\in\mathbb{N})$, and \mTwo{$u \in
    J_{L,0}^{\mu}(\Omega) \cap J_{R,0}^{\mu}(\Omega)
  \cap H^{\mu}_0{(\Omega)}$}. Then
  there exist positive constants $C_1$ and $C_2$ independent of $u$ such that
  \begin{equation}
    C_1 |u|_{H^{\mu}(\Omega)} \le  
    \max\big\{|u|_{J_L^{\mu}(\Omega)},|u|_{J_R^{\mu}(\Omega)}\big\}
    \le C_2 |u|_{H^{\mu}(\Omega)}. 
  \end{equation}
\end{lemma}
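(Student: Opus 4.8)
The plan is to move to the Fourier side after a zero extension, treat one coordinate direction at a time, and then assemble the two resulting one-dimensional estimates on $\mathbb{R}^2$ by Fubini. First I would reduce the claim to $u\in C_0^\infty(\Omega)$: since $J_{L,0}^\mu(\Omega)$, $J_{R,0}^\mu(\Omega)$ and $H_0^\mu(\Omega)$ are by definition the closures of $C_0^\infty(\Omega)$ in their respective norms, once the two inequalities hold on $C_0^\infty(\Omega)$ with constants independent of the function, a routine density argument extends them to the intersection $J_{L,0}^\mu\cap J_{R,0}^\mu\cap H_0^\mu$ --- approximate $u$ in $\|\cdot\|_{H^\mu(\Omega)}$; the upper bound below forces the approximants to be Cauchy in the $J_L^\mu$ and $J_R^\mu$ seminorms as well, and the $L^2$ parts identify all three limits with $u$. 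For $u\in C_0^\infty(\Omega)$ let $\hat u$ be its extension by zero to $\mathbb{R}^2$. Since $u$ is compactly supported inside $\Omega$, on $\Omega$ the Riemann--Liouville operators with the moving endpoints $a(y),b(y),c(x),d(x)$ coincide with the whole-line Liouville derivatives of $\hat u$, e.g.\ ${}_{a(y)}D_x^\mu u={}_{-\infty}D_x^\mu\hat u$ on $\Omega$, and $\hat u$ is a smooth, compactly supported function, so Parseval's identity and the standard Fourier-symbol formulas for fractional derivatives apply to it without any restriction on $\mu$.

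For the upper bound, fix $y$ and apply Parseval in $x$, using $\mathcal{F}_x\!\big({}_{-\infty}D_x^\mu\hat u\big)(\omega_1,y)=(i\omega_1)^\mu\,\mathcal{F}_x\hat u(\omega_1,y)$ and $|(i\omega_1)^\mu|=|\omega_1|^\mu$, to get $\|{}_{-\infty}D_x^\mu\hat u(\cdot,y)\|_{L^2(\mathbb{R})}=\big\||\omega_1|^\mu\mathcal{F}_x\hat u(\cdot,y)\big\|_{L^2(\mathbb{R})}$. Integrating in $y$, doing the same in the $y$-direction, using the elementary comparison $|\omega_1|^{2\mu}+|\omega_2|^{2\mu}\le 2\,(\omega_1^2+\omega_2^2)^\mu$, and the trivial restriction bound $\|{}_{a(y)}D_x^\mu u\|_{L^2(\Omega)}\le\|{}_{-\infty}D_x^\mu\hat u\|_{L^2(\mathbb{R}^2)}$, I obtain $|u|_{J_L^\mu(\Omega)}\le C_2|u|_{H^\mu(\Omega)}$; the same computation with right-sided derivatives and the symbol $(-i\omega)^\mu$ gives $|u|_{J_R^\mu(\Omega)}\le C_2|u|_{H^\mu(\Omega)}$, which is the right-hand inequality.

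For the lower bound I would reverse the symbol comparison, $(\omega_1^2+\omega_2^2)^\mu\le 2^\mu\big(|\omega_1|^{2\mu}+|\omega_2|^{2\mu}\big)$, reducing matters to bounding $\|{}_{-\infty}D_x^\mu\hat u\|_{L^2(\mathbb{R}^2)}$ (and its $y$-counterpart) by the $J_L^\mu$ and $J_R^\mu$ seminorms of $u$. The whole-line $L^2$ norm here is the part over $\Omega$ --- which is exactly $\|{}_{a(y)}D_x^\mu u\|_{L^2(\Omega)}$ --- plus a tail coming from $x$ beyond the right edge of $\Omega$, and controlling that tail is the crux of the proof and the step I expect to be the main obstacle. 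Beyond the boundary, ${}_{-\infty}D_x^\mu\hat u(x,y)$ equals (for the orders $\tfrac12<\mu<1$ relevant to \eqref{eq:equation}) a multiple of the integral of the kernel $(x-s)^{-\mu}$ against $\partial_s\hat u(s,y)$ over the support of $\hat u$, a kernel that decays as $x$ leaves $\overline\Omega$; one then estimates this tail by a constant times a combination of $\|{}_{a(y)}D_x^\mu u\|_{L^2(\Omega)}$ and $\|{}_x D_{b(y)}^\mu u\|_{L^2(\Omega)}$, the underlying fact being that the finite-interval and whole-line fractional operators have the same order on compactly supported functions (equivalently, one imports the one-dimensional seminorm equivalence of Ervin and Roop and applies it on every line $\{y=\mathrm{const}\}$ and $\{x=\mathrm{const}\}$). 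Combining the tail bounds yields $C_1|u|_{H^\mu(\Omega)}\le\max\{|u|_{J_L^\mu(\Omega)},|u|_{J_R^\mu(\Omega)}\}$, completing the proof.

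Finally I would record the $J_S^\mu$ companion mentioned before the lemma, which also explains why $\mu\ne n-\tfrac12$ is assumed throughout: for real, compactly supported $v$, Parseval gives $\big({}_{-\infty}D_x^\mu v,\,{}_x D_\infty^\mu v\big)_{L^2(\mathbb{R})}=\cos(\mu\pi)\,\big\||\omega|^\mu\mathcal{F}v\big\|_{L^2(\mathbb{R})}^2$, and similarly in $y$, so $|\cdot|_{J_S^\mu}$ is comparable to $|\cdot|_{H^\mu}$ precisely when $\cos(\mu\pi)\ne0$; since the natural proof routes the $J_L^\mu$ and $J_R^\mu$ equivalences through $J_S^\mu$, the half-integer orders $\mu=n-\tfrac12$ must be excluded.
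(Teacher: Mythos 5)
Your upper bound is fine: for $u\in C_0^\infty(\Omega)$ the operators ${}_{a(y)}D_x^\mu$, ${}_{c(x)}D_y^\mu$ agree on $\Omega$ with the whole-line derivatives of the zero extension $\hat u$, Plancherel gives $\|{}_{-\infty}D_x^\mu\hat u\|_{L^2(\mathbb{R}^2)}=\||\omega_1|^\mu\mathcal{F}\hat u\|_{L^2(\mathbb{R}^2)}$, and the comparison $|\omega_1|^{2\mu}+|\omega_2|^{2\mu}\le 2(\omega_1^2+\omega_2^2)^{\mu}$ plus restriction to $\Omega$ yields $\max\{|u|_{J_L^\mu},|u|_{J_R^\mu}\}\le C_2|u|_{H^\mu}$; the density step is routine. (Note the paper itself does not prove this lemma but quotes it from Ervin and Roop, so the comparison is with their argument.) The genuine gap is in your lower bound. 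Your plan is to write $\|{}_{-\infty}D_x^\mu\hat u\|^2_{L^2(\mathbb{R}^2)}$ as the part over $\Omega$ plus a tail over $\{x>b(y)\}$ and to control the tail by kernel decay, "importing the one-dimensional seminorm equivalence of Ervin and Roop" if needed. The fallback is circular --- that one-dimensional equivalence \emph{is} the lemma (per line), not an available tool --- and the direct kernel-decay estimate fails as sketched: for $x>b(y)$ one has ${}_{-\infty}D_x^\mu\hat u=\frac{-\mu}{\Gamma(1-\mu)}\int_{a(y)}^{b(y)}(x-s)^{-\mu-1}u(s,y)\,ds$, and a Cauchy--Schwarz bound of its $L^2$ tail produces $\int_{a(y)}^{b(y)}(b(y)-s)^{-2\mu-1}ds$, which diverges; no bound of the tail by $\|u\|_{L^2}$ or by simple decay can hold uniformly, and, tellingly, such an argument would never use the hypothesis $\mu\ne n-\tfrac12$, which must enter the constant $C_1$.

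The correct closing move is the one you relegate to your final paragraph: the cross-term Plancherel identity (the paper's Lemma~\ref{lemma:23}). Since ${}_{-\infty}D_x^\mu\hat u$ vanishes for $x<a(y)$ and ${}_xD_{+\infty}^\mu\hat u$ vanishes for $x>b(y)$, the whole-plane pairing localizes to $\Omega$, giving $|\cos(\mu\pi)|\,\||\omega_1|^\mu\mathcal{F}\hat u\|^2_{L^2(\mathbb{R}^2)}=\big|\big({}_{a(y)}D_x^\mu u,\ {}_xD_{b(y)}^\mu u\big)_{L^2(\Omega)}\big|\le \|{}_{a(y)}D_x^\mu u\|_{L^2(\Omega)}\,\|{}_xD_{b(y)}^\mu u\|_{L^2(\Omega)}\le \max\{|u|_{J_L^\mu},|u|_{J_R^\mu}\}^2$, and similarly in $y$; adding the two and using $(\omega_1^2+\omega_2^2)^\mu\le 2^{\mu}\big(|\omega_1|^{2\mu}+|\omega_2|^{2\mu}\big)$ gives $C_1|u|_{H^\mu}\le\max\{|u|_{J_L^\mu},|u|_{J_R^\mu}\}$ with $C_1$ proportional to $|\cos(\mu\pi)|^{1/2}$, which is exactly where $\mu\ne n-\tfrac12$ is used and why no tail estimate is needed. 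With your lower-bound section replaced by this argument (as in Ervin and Roop), the proof is complete; as written, the crux step is not established.
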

We also have the fractional Poincar\'e-Friedrichs inequalities.
\begin{lemma}\label{lemma:equivalent}
  For $u \in H_0^{\mu}(\Omega)$ and $ 0 < s < \mu$, we have
  \mTwo{
  \begin{equation}
    \begin{aligned}
      \|u\|_{L^2(\Omega)} \le C_1\|{}_{a(y)}D_{x}^{s}u\|_{L^2(\Omega)} 
      \le C_2\|{}_{a(y)}D_{x}^{\mu}u\|_{L^2(\Omega)}, \\
      \|u\|_{L^2(\Omega)} \le C_3\|{}_{c(x)}D_{y}^{s}u\|_{L^2(\Omega)} 
      \le C_4\|{}_{c(x)}D_{y}^{\mu}u\|_{L^2(\Omega)},
    \end{aligned}
  \end{equation} }
  where $C_1$, $C_2$, $C_3$, and $C_4$ are positive constants independent of $u$.
\end{lemma}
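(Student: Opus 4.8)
The plan is to reduce both pairs of inequalities to a one-dimensional Poincar\'e--Friedrichs estimate on the horizontal (resp.\ vertical) sections of $\Omega$, and to prove that one-dimensional estimate from the composition rules of the Riemann--Liouville operators together with the $L^2$-boundedness of fractional integration over a bounded interval.

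\emph{Reduction to smooth functions and to slices.} Since $C_0^\infty(\Omega)$ is dense in $H_0^\mu(\Omega)$, and (as checked below) the maps $u\mapsto{}_{a(y)}D_x^{s}u$ and $u\mapsto{}_{a(y)}D_x^{\mu}u$ extend to bounded operators from $H_0^\mu(\Omega)$ into $L^2(\Omega)$ — the latter by Lemma~\ref{lemma:relation} — it suffices to establish the inequalities for $\varphi\in C_0^\infty(\Omega)$. Fix such a $\varphi$, put $L:=\operatorname{diam}(\Omega)$, and for each $y$ in the $y$-range of $\Omega$ let $I_y:=(a(y),b(y))$, an open interval of length $\le L$. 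By convexity $\{x:(x,y)\in\Omega\}=I_y$, so $\varphi(\cdot,y)\in C_0^\infty(I_y)$, and ${}_{a(y)}D_x^{\sigma}\varphi(\cdot,y)$ is exactly the one-dimensional left Riemann--Liouville derivative of $\varphi(\cdot,y)$ on $I_y$. Hence it is enough to prove: for every bounded interval $I$ with $|I|\le L$ and every $v\in C_0^\infty(I)$,
\[
  \|v\|_{L^2(I)}\ \le\ \frac{L^{s}}{\Gamma(s+1)}\,\|{}_aD_x^{s}v\|_{L^2(I)}
  \ \le\ \frac{L^{s}}{\Gamma(s+1)}\cdot\frac{L^{\mu-s}}{\Gamma(\mu-s+1)}\,\|{}_aD_x^{\mu}v\|_{L^2(I)} .
\]
Squaring and integrating in $y$ (the constants being uniform in $y$) then yields the first pair of inequalities in the lemma; the second pair follows verbatim after interchanging the roles of $x$ and $y$ and replacing $I_y$ by the vertical section $(c(x),d(x))$.

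\emph{The one-dimensional estimate.} For $v\in C_0^\infty(I)$ all Riemann--Liouville ``initial terms'' at the left endpoint of $I$ vanish, so the composition/semigroup identities for the left operators give
\[
  v = {}_aI_x^{s}\big({}_aD_x^{s}v\big),\qquad {}_aD_x^{s}v = {}_aI_x^{\mu-s}\big({}_aD_x^{\mu}v\big),
\]
where ${}_aI_x^{\gamma}$ denotes the left fractional integral of order $\gamma>0$. For $x\in I$ and $w$ supported in $I$ one has $|{}_aI_x^{\gamma}w(x)|\le (k_\gamma*|w|)(x)$ with the truncated kernel $k_\gamma(t)=t^{\gamma-1}/\Gamma(\gamma)$ on $(0,|I|)$, so Young's convolution inequality gives $\|{}_aI_x^{\gamma}w\|_{L^2(I)}\le \|k_\gamma\|_{L^1}\|w\|_{L^2(I)}=\big(|I|^{\gamma}/\Gamma(\gamma+1)\big)\|w\|_{L^2(I)}\le \big(L^{\gamma}/\Gamma(\gamma+1)\big)\|w\|_{L^2(I)}$. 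Applying this with $\gamma=s$ to the first identity and with $\gamma=\mu-s$ to the second yields the displayed one-dimensional chain. The same bound (with $\gamma=\mu-s$) applied slice-wise also shows that $\varphi\mapsto{}_{a(y)}D_x^{s}\varphi={}_{a(y)}I_x^{\mu-s}({}_{a(y)}D_x^{\mu}\varphi)$ is bounded from $H_0^\mu(\Omega)$ into $L^2(\Omega)$, which (together with the boundedness of ${}_{a(y)}D_x^{\mu}$ granted by Lemma~\ref{lemma:relation}) justifies the density step used above.

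\emph{Main difficulty.} There is no deep obstruction; the two points requiring care are (i) the rigorous justification of the composition identities $v={}_aI_x^{s}({}_aD_x^{s}v)$ and ${}_aD_x^{s}v={}_aI_x^{\mu-s}({}_aD_x^{\mu}v)$ on the dense class $C_0^\infty(I)$, which rests on the vanishing of the Riemann--Liouville initial terms for compactly supported smooth functions, followed by passage to $H_0^\mu$ by continuity of the operators involved; and (ii) the uniformity of the one-dimensional constant over all sections, which holds because $L^{\gamma}/\Gamma(\gamma+1)$ is nondecreasing in the interval length and every section of the bounded convex set $\Omega$ has length at most $\operatorname{diam}(\Omega)$. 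All constants produced in this way depend only on $\operatorname{diam}(\Omega)$, $s$ and $\mu$, hence are independent of $u$, as required.
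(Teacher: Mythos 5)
Your proposal is correct, but note that the paper does not actually prove this lemma in-text: it simply cites Theorem~3.1.9 of Roop's thesis. Your argument — reduce to $C_0^\infty(\Omega)$ by density, slice $\Omega$ along horizontal (resp.\ vertical) lines using convexity, write $v={}_aI_x^{s}({}_aD_x^{s}v)$ and ${}_aD_x^{s}v={}_aI_x^{\mu-s}({}_aD_x^{\mu}v)$ via the semigroup property (boundary terms vanishing for compactly supported smooth slices), and bound the fractional integral on an interval of length at most $\operatorname{diam}(\Omega)$ by Young's inequality with the $L^1$ kernel norm $|I|^{\gamma}/\Gamma(\gamma+1)$ — is the standard route taken in that reference, and all the steps hold. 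Two small points of care: (i) Lemma~\ref{lemma:relation} as stated carries the restriction $\mu\ne n-1/2$, whereas the present lemma does not; you only need the one-sided bound $|u|_{J_L^{\mu}(\Omega)}\le C|u|_{H^{\mu}(\Omega)}$, which is valid for all $\mu>0$, so you should invoke that direction rather than the full equivalence; (ii) in the density passage one should also identify the continuous extension of $u\mapsto{}_{a(y)}D_x^{s}u$ with the derivative of the limit function (this is immediate from the definition of the closure spaces $J_{L,0}^{s}$, but deserves a sentence). With those remarks your proof is a legitimate self-contained substitute for the citation.
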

\begin{proof}
  See Theorem $3.1.9$ in~\cite{Roop2004}.
\end{proof}
\begin{lemma}[\cite{Ervin2007}]\label{lemma:23}
  Let $\mu > 0$, $u \in J_{L,0}^{\mu}(\Omega) \cap J_{R,0}^{\mu}(\Omega)$. Then
  \mTwo{
  \begin{equation}
    \begin{aligned}
    \big({}_{a(y)}D_{x}^{\mu}u(x,y), {}_{x}D_{b(y)}^{\mu}u(x,y)\big) 
    &= \cos(\mu\pi)\|{}_{-\infty}D_{x}^{\mu}\hat u(x,y)\|^2_{L^2(\mathbb{R}^2)}  \\
    &= \cos(\mu\pi)\|{}_xD_{+\infty}^{\mu}\hat u(x,y)\|^2_{L^2(\mathbb{R}^2)}, \\
    \big({}_{c(x)}D_{y}^{\mu}u(x,y), {}_{y}D_{d(x)}^{\mu}u(x,y)\big) 
    &= \cos(\mu\pi)\|{}_{-\infty}D_{y}^{\mu}\hat u(x,y)\|^2_{L^2(\mathbb{R}^2)}  \\
    &= \cos(\mu\pi)\|{}_yD_{+\infty}^{\mu}\hat u(x,y)\|^2_{L^2(\mathbb{R}^2)}, \\
    \end{aligned}
  \end{equation}}
where $\hat u$ is the extension of $u$ by zero outside $\Omega$.
\end{lemma}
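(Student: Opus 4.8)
The plan is to reduce to the case $u\in C_0^{\infty}(\Omega)$ by density, transport the identities to the whole plane via the zero extension, and then diagonalize the Riemann--Liouville operators with the Fourier transform. For the density reduction, observe that, with respect to the intersection norm on $V:=J_{L,0}^{\mu}(\Omega)\cap J_{R,0}^{\mu}(\Omega)$, both sides of the claimed identity are continuous in $u$: by the Cauchy--Schwarz inequality the left-hand side is bounded by $|u|_{J_L^{\mu}(\Omega)}\,|u|_{J_R^{\mu}(\Omega)}$, while each right-hand side is $\cos(\mu\pi)$ times a quantity bounded by a constant times $|u|_{H^{\mu}(\Omega)}^2$, which by Lemma~\ref{lemma:relation} is controlled by $|u|_{J_L^{\mu}(\Omega)}^2+|u|_{J_R^{\mu}(\Omega)}^2$. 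Since $C_0^{\infty}(\Omega)$ is dense in $V$ by definition, it is enough to prove the identities for $u\in C_0^{\infty}(\Omega)$.

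Second, for such $u$ the zero extension $\hat u$ belongs to $C_0^{\infty}(\mathbb{R}^2)$, and for each fixed $y$ the section $x\mapsto\hat u(x,y)$ is supported in $[a(y),b(y)]$ because, $\Omega$ being convex, that section is an interval. It follows that inside $\Omega$ the finite-terminal operators agree with the whole-line ones, namely ${}_{a(y)}D_x^{\mu}u={}_{-\infty}D_x^{\mu}\hat u$ and ${}_xD_{b(y)}^{\mu}u={}_xD_{+\infty}^{\mu}\hat u$, while ${}_{-\infty}D_x^{\mu}\hat u$ vanishes for $x<a(y)$ and ${}_xD_{+\infty}^{\mu}\hat u$ vanishes for $x>b(y)$ (both also vanish for $y$ outside the $y$-projection of $\Omega$). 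Hence the regions outside $\Omega$ contribute nothing to the pairing, one of the two factors being zero there, so $\big({}_{a(y)}D_x^{\mu}u,\,{}_xD_{b(y)}^{\mu}u\big)_{L^2(\Omega)}=\big({}_{-\infty}D_x^{\mu}\hat u,\,{}_xD_{+\infty}^{\mu}\hat u\big)_{L^2(\mathbb{R}^2)}$; a direct tail estimate shows these whole-line derivatives decay like $|x|^{-\mu-1}$ and therefore lie in $L^2(\mathbb{R}^2)$, so the pairing is legitimate.

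Third, I would take the one-dimensional Fourier transform in $x$ (treating $y$ as a parameter), using $\mathcal{F}_x({}_{-\infty}D_x^{\mu}\hat u)=(i\omega)^{\mu}\mathcal{F}_x\hat u$ and $\mathcal{F}_x({}_xD_{+\infty}^{\mu}\hat u)=(-i\omega)^{\mu}\mathcal{F}_x\hat u$ with the principal branch, together with Parseval's identity. As $\hat u$ is real, $|\mathcal{F}_x\hat u(\cdot,y)|^2$ is an even function of $\omega$, and the branch computation $(i\omega)^{\mu}\overline{(-i\omega)^{\mu}}=|\omega|^{2\mu}e^{i\pi\mu\,\mathrm{sgn}(\omega)}$ shows that the imaginary part integrates to zero against this even weight, leaving $\cos(\mu\pi)\int_{\mathbb{R}}|\omega|^{2\mu}|\mathcal{F}_x\hat u(\omega,y)|^2\,d\omega=\cos(\mu\pi)\,\|{}_{-\infty}D_x^{\mu}\hat u(\cdot,y)\|_{L^2(\mathbb{R})}^2$. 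Integrating in $y$ by Tonelli gives the first equality; since $|(-i\omega)^{\mu}|=|\omega|^{\mu}$ as well, the same computation with the roles of the two derivatives swapped gives the second, and the $y$-direction identities follow by repeating the argument with $x$ and $y$ interchanged.

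The step I expect to be the main obstacle is the second one, where the geometry of $\Omega$ meets the nonlocality of the operators: one must check carefully that the zero extension makes the finite-terminal and whole-line derivatives coincide inside $\Omega$, that the contributions from outside $\Omega$ drop out because the two factors are supported on opposite sides of the section, and that the whole-line fractional derivatives genuinely belong to $L^2(\mathbb{R}^2)$. The Fourier and branch-cut bookkeeping in the third step is routine provided the principal branch of $\zeta\mapsto\zeta^{\mu}$ is used consistently.
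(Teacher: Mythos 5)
Your argument is correct, but note that the paper itself offers no proof of this lemma: it is quoted from \cite{Ervin2007} and the reader is referred to Roop's thesis \cite{Roop2004}. Your three steps (reduce to $u\in C_0^{\infty}(\Omega)$ by density, pass to the zero extension $\hat u$ so that the finite-terminal and whole-line Riemann--Liouville derivatives coincide on each horizontal section of the convex domain and the off-$\Omega$ contributions vanish, then diagonalize with the one-dimensional Fourier transform and the principal-branch identity $(i\omega)^{\mu}\overline{(-i\omega)^{\mu}}=|\omega|^{2\mu}e^{i\pi\mu\,\mathrm{sgn}(\omega)}$) are exactly the standard route taken in those references, so you have in effect reconstructed the cited proof rather than found a different one. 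The only point worth flagging is your density step: you invoke Lemma~\ref{lemma:relation} to get continuity of the right-hand side in the intersection norm, and that lemma excludes $\mu=n-1/2$, whereas the statement here is phrased for all $\mu>0$; since the lemma is only ever applied in this paper with $1/2<\mu<1$ (and at half-integers the right-hand side degenerates because $\cos(\mu\pi)=0$, so only the continuity of the left-hand side is needed there), this is a cosmetic restriction, but you should either state it or handle the half-integer case separately.
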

For the proof of this lemma, see \cite{Roop2004} for more details. 

\begin{lemma}[\cite{Zhang2010}]\label{lemma:var}
  Let $1/2 < \mu < 1 $. $u,v \mTwo{\in H_0^{\mu}(\Omega) \cap H_0^{2\mu}(\Omega)}$. Then
  \begin{equation}\label{eq:lvar}
    \begin{aligned}
      \big({}_{a(y)}D_{x}^{2\mu}u(x,y), v(x,y)\big) &= \big({}_{a(y)}D_{x}^{\mu}u(x,y), {}_{x}D_{b(y)}^{\mu}v(x,y)\big), \\
      \big({}_{x}D_{b(y)}^{2\mu}u(x,y), v(x,y)\big) &= \big({}_{x}D_{b(y)}^{\mu}u(x,y), {}_{a(y)}D_{x}^{\mu}v(x,y)\big).
    \end{aligned}
  \end{equation}
\end{lemma}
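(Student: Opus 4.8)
The plan is to establish the two adjoint-type identities by combining the "integration-by-parts" / fractional-adjoint property of one-sided Riemann--Liouville derivatives with the zero-extension device already used in Lemmas~\ref{lemma:23} and~\ref{lemma:relation}. Since $u,v\in H_0^\mu(\Omega)\cap H_0^{2\mu}(\Omega)$, their zero extensions $\hat u,\hat v$ lie in the corresponding spaces on $\mathbb{R}^2$, and the left Riemann--Liouville derivative based at $a(y)$ agrees (after extension) with the derivative based at $-\infty$, because $u$ vanishes on $(-\infty,a(y)]$ in the $x$-variable for each fixed $y$; similarly the right derivative based at $b(y)$ coincides with the one based at $+\infty$. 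Thus it suffices to prove, for each fixed $y$, the one-dimensional identities $({}_{-\infty}D_x^{2\mu}\hat u,\hat v)_{L^2(\mathbb{R})}=({}_{-\infty}D_x^{\mu}\hat u,{}_xD_{+\infty}^{\mu}\hat v)_{L^2(\mathbb{R})}$ and its mirror image, and then integrate in $y$ over the appropriate range.

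First I would reduce to the one-dimensional statement on $\mathbb{R}$ by Fubini, fixing $y$ and writing everything in terms of the $x$-slices of the extended functions. Second, I would use the semigroup property of one-sided fractional integrals/derivatives to split ${}_{-\infty}D_x^{2\mu}={}_xD^{\mu}_{+\infty}$-adjoint style: concretely, write ${}_{-\infty}D_x^{2\mu}\hat u = {}_{-\infty}D_x^{\mu}\bigl({}_{-\infty}D_x^{\mu}\hat u\bigr)$, valid for $1/2<\mu<1$ since $2\mu\in(1,2)$ and the regularity $\hat u\in H^{2\mu}$ guarantees the intermediate derivative is in $L^2$. Third, I would invoke the fractional integration-by-parts formula on $\mathbb{R}$, namely $({}_{-\infty}D_x^{\mu}w,v)_{L^2(\mathbb{R})}=(w,{}_xD_{+\infty}^{\mu}v)_{L^2(\mathbb{R})}$ for $w,v$ in the relevant fractional Sobolev spaces (this is the Fourier-side statement $\widehat{{}_{-\infty}D_x^{\mu}w}=(i\omega)^\mu\hat w$ paired against $\overline{(i\omega)^{\mu}\hat v}$ appropriately, or equivalently Lemma~2.7-type results in Ervin--Roop), applied with $w={}_{-\infty}D_x^{\mu}\hat u$. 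This yields $({}_{-\infty}D_x^{2\mu}\hat u,\hat v)=({}_{-\infty}D_x^{\mu}\hat u,{}_xD_{+\infty}^{\mu}\hat v)$; translating back to the finite-interval operators via the zero-extension correspondence gives the first line of \eqref{eq:lvar}. The second line follows by the symmetric argument with the roles of left/right derivatives interchanged, or by applying the first identity with $u$ and $v$ swapped and using $\cos(\mu\pi)\ne 0$ is not even needed here.

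The main obstacle I expect is the careful justification that the finite-interval Riemann--Liouville operators ${}_{a(y)}D_x^{\mu}$ and ${}_{x}D_{b(y)}^{\mu}$ acting on $u,v$ genuinely transport to the whole-line operators ${}_{-\infty}D_x^{\mu}$, ${}_{x}D_{+\infty}^{\mu}$ acting on $\hat u,\hat v$ \emph{including the boundary terms}: the integration-by-parts identity on a finite interval $[a(y),b(y)]$ normally produces boundary contributions involving fractional integrals of $u$ and $v$ at the endpoints, and one must check these vanish. This is where the membership $u,v\in H_0^\mu(\Omega)$ (zero trace, and more importantly zero extension lying in $H^\mu(\mathbb{R}^2)$) does the work, exactly as in the proof of Lemma~\ref{lemma:23}; so in practice I would not reprove it from scratch but cite \cite{Roop2004,Ervin2007} for the density/extension machinery and then run the Fourier-transform computation on $\mathbb{R}^2$ (or on $\mathbb{R}$ slicewise), where the identity is transparent because $\mathcal{F}({}_{-\infty}D_x^{2\mu}\hat u)=(i\omega_1)^{2\mu}\mathcal{F}(\hat u)=(i\omega_1)^{\mu}\cdot(i\omega_1)^{\mu}\mathcal{F}(\hat u)$ factors precisely as needed, and Parseval converts the $L^2(\mathbb{R}^2)$ inner product into the claimed pairing. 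A secondary technical point is handling the non-constant limits $a(y),b(y)$: since the extension is by zero, the slice of $\hat u$ in $x$ is supported in $[a(y),b(y)]$ for a.e.\ $y$, so no difficulty arises beyond measurability of $y\mapsto a(y),b(y)$, which holds since $\Omega$ is convex.
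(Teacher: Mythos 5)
Your argument is correct, but it follows a genuinely different route from the paper. The paper never leaves the bounded domain: it writes ${}_{a(y)}D_x^{2\mu}u = D_x^2\,{}_{a(y)}J_x^{2-2\mu}u$ for $u,v\in C_0^\infty(\Omega)$, performs one classical integration by parts (no boundary terms by compact support), uses the semigroup property of the fractional integrals to split $J^{2-2\mu}=J^{1-\mu}J^{1-\mu}$, transfers one factor ${}_{a(y)}J_x^{1-\mu}$ onto $v$ via the adjoint property of fractional \emph{integral} operators (Corollary~2.1 of Ervin--Roop), recombines $D_x J^{1-\mu}$ into the $\mu$-order derivatives, and concludes by density. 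You instead extend by zero to $\mathbb{R}^2$, identify ${}_{a(y)}D_x^{\mu}$, ${}_{x}D_{b(y)}^{\mu}$ with the whole-line operators, and verify the adjoint identity on the Fourier side via $(i\omega_1)^{\mu}(i\omega_1)^{\mu}=(i\omega_1)^{2\mu}$ and Plancherel --- essentially the mechanism underlying the Ervin--Roop lemmas themselves, and the same device the paper uses for Lemma~\ref{lemma:23}. Your route avoids any discussion of boundary terms and of the semigroup property of finite-interval integrals, at the price of two points you should make explicit: first, when translating back, the $L^2(\mathbb{R}^2)$ pairing $({}_{-\infty}D_x^{\mu}\hat u,\,{}_xD_{+\infty}^{\mu}\hat v)$ involves functions that are \emph{not} supported in $\overline\Omega$ (the left derivative has a tail for $x>b(y)$, the right one for $x<a(y)$); the pairing nevertheless reduces to the integral over $\Omega$ because on each tail the other factor vanishes, and this complementary-support observation is what legitimizes ``translating back.'' Second, your parenthetical alternative for the second line of \eqref{eq:lvar} --- applying the first identity with $u$ and $v$ swapped --- does not give it (that yields a statement about ${}_{a(y)}D_x^{2\mu}v$ paired with $u$, not about ${}_{x}D_{b(y)}^{2\mu}u$); rely on the mirror-image Fourier computation with $(-i\omega_1)^{\mu}$ instead, which works verbatim.
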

\mTwo{
\begin{proof}
  Assuming $u, v \in C^\infty_0(\Omega)$, by the property of fractional derivatives, we have
  \cite[see formulas 2.4.12 and 2.4.13 in page 92]{Kilbas2006}
  \begin{equation*}
    {}_{a(y)}D^\alpha_x u = {}_{a(y)}J^{1-\alpha}_x D_xu, \quad 0 < \alpha < 1,
  \end{equation*}
  where $D_x$ represent the classical derivative of variable $x$, and
  ${}_{a(y)}J^{\mu}_x$ is left fractional integral operator defined by
  \begin{equation*}
    {}_{a(y)}J_{x}^{\mu}u(x,y)=\frac{1}{\Gamma(\mu)}\int_{a(y)}^{x}{(x-s)}^{\mu-1}u(s,y)ds,
    \quad \mu > 0.
  \end{equation*}
  Similarly, define right fractional integral operator
  \begin{equation*}
    {}_{x}J_{b(y)}^{\mu}u(x,y)=\frac{1}{\Gamma(\mu)}\int_{x}^{b(y)}{(s-x)}^{\mu-1}u(s,y)ds,
    \quad \mu > 0.
  \end{equation*}
  According to the definitions of the integral operators defined above, we have 
  \cite[see formula 2.1.30 in page 73]{Kilbas2006}
  \begin{equation*}
    \begin{aligned}
    {}_{a(y)}J_{x}^{\alpha + \beta}u(x,y) = {}_{a(y)}J_{x}^{\alpha}{}_{a(y)}J_{x}^{\beta}u(x,y), \\
    {}_{x}J_{b(y)}^{\alpha + \beta}u(x,y) = {}_{x}J_{b(y)}^{\alpha}{}_{x}J_{b(y)}^{\beta}u(x,y).
    \end{aligned}
  \end{equation*}
  Then
  \begin{equation*}
    \begin{aligned}
      \big({}_{a(y)}D_{x}^{2\mu}u(x,y), v(x,y)\big) 
      &= \big(D_x^2\ {}_{a(y)}J_{x}^{2-2\mu}u(x,y), v(x,y)\big) \\
      &= \big(D_x\ {}_{a(y)}J_{x}^{2-2\mu}u(x,y), -D_x\ v(x,y)\big) \\
      &= \big({}_{a(y)}J_{x}^{2-2\mu}D_x u(x,y), -D_x\ v(x,y)\big). \\
    \end{aligned}
  \end{equation*}
  Applying Corollary~2.1 in Ref.~\cite{Ervin2007}, we can deduce that
  \begin{equation*}
    \begin{aligned}
      \big({}_{a(y)}D_{x}^{2\mu}u(x,y), v(x,y)\big) 
      &= \big({}_{a(y)}J_{x}^{1-\mu}D_x u(x,y), -{}_{x}J_{b(y)}^{1-\mu}D_x\ v(x,y)\big) \\
      &= \big(D_x\ {}_{a(y)}J_{x}^{1-\mu}u(x,y), -D_x\ {}_{x}J_{b(y)}^{1-\mu}v(x,y)\big) \\
      &=  \big({}_{a(y)}D_{x}^{\mu}u(x,y), {}_{x}D_{b(y)}^{\mu}v(x,y)\big).
    \end{aligned}
  \end{equation*}

  Dense argument yields the first equivalent relation in Eq.~\eqref{eq:lvar}. 
  The second identity is proved similarly.
\end{proof} }
The formulas in Lemma~\ref{lemma:var}, which are used to construct the
stiffness matrix in finite element method, are similar to the formula of
integration by parts, but for fractional derivatives.
\mTwo{
\begin{lemma}[\cite{Ervin2007}]\label{lemma:add}
  Let $\mu > 0, \mu \ne n-1/2$. Then for all $u\in  J_{L,0}^{\mu}(\Omega) \cap J_{R,0}^{\mu}(\Omega)$, following inequalities hold
\begin{equation}
  \begin{aligned}
  C_1\|{}_{a(y)}D_{x}^{\alpha}u\|^2 \le \|{}_{x}D_{b(y)}^{\alpha}u\|^2 
    \le C_2\|{}_{a(y)}D_{x}^{\alpha}u\|^2,  \\
  C_3\|{}_{c(x)}D_{y}^{\beta}u\|^2 \le \|{}_{y}D_{d(x)}^{\beta}u\|^2 
    \le C_4\|{}_{c(x)}D_{y}^{\beta}u\|^2,  \\
  \end{aligned}
\end{equation}
where $C_1, C_2, C_3, C_4 > 0$ are independent with $u$.
\end{lemma}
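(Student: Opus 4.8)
The plan is to derive both inequalities from the fractional ``integration by parts'' identity already available as Lemma~\ref{lemma:23}, combined with a zero-extension argument; I would note at the outset that Lemma~\ref{lemma:relation} by itself does not suffice, since it only compares the \emph{full} seminorms $|\cdot|_{J_L^{\mu}(\Omega)}$ and $|\cdot|_{J_R^{\mu}(\Omega)}$ and does not separate the two coordinate directions. I will prove the assertion for a generic exponent $\mu>0$ with $\mu\ne n-1/2$ (the two displayed lines are then the cases $\mu=\alpha$ and $\mu=\beta$), and I will treat only the $x$-direction, the $y$-direction being obtained by running the same argument in the second variable. First I would reduce to $u\in C_0^{\infty}(\Omega)$: because $J_{L,0}^{\mu}(\Omega)\cap J_{R,0}^{\mu}(\Omega)$ is by definition a closure of $C_0^{\infty}(\Omega)$ and the maps $u\mapsto\|{}_{a(y)}D_x^{\mu}u\|_{L^2(\Omega)}$ and $u\mapsto\|{}_{x}D_{b(y)}^{\mu}u\|_{L^2(\Omega)}$ are continuous with respect to $\|\cdot\|_{J_L^{\mu}(\Omega)}$ and $\|\cdot\|_{J_R^{\mu}(\Omega)}$ respectively, it is enough to prove the estimates for smooth compactly supported $u$ and pass to the limit. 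For such $u$ I abbreviate $A=\|{}_{a(y)}D_x^{\mu}u\|_{L^2(\Omega)}$, $B=\|{}_{x}D_{b(y)}^{\mu}u\|_{L^2(\Omega)}$ and $P=\|{}_{-\infty}D_x^{\mu}\hat u\|_{L^2(\mathbb{R}^2)}$, where $\hat u$ is the extension of $u$ by zero.

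The first step is to establish $A\le P$ and $B\le P$. Since $\Omega$ is convex and $u$ vanishes near $\partial\Omega$, the horizontal slice $\hat u(\cdot,y)$ is supported in $[a(y),b(y)]$, so for $x\in(a(y),b(y))$ the lower terminal $-\infty$ in ${}_{-\infty}D_x^{\mu}\hat u(x,y)$ may be replaced by $a(y)$; hence the restriction of ${}_{-\infty}D_x^{\mu}\hat u$ to $\Omega$ coincides with ${}_{a(y)}D_x^{\mu}u$, and $A\le P$ follows by discarding the (nonnegative) contribution from $\mathbb{R}^2\setminus\Omega$. Applying the same reasoning to ${}_{x}D_{+\infty}^{\mu}\hat u$, and using the equality $\|{}_{-\infty}D_x^{\mu}\hat u\|_{L^2(\mathbb{R}^2)}=\|{}_{x}D_{+\infty}^{\mu}\hat u\|_{L^2(\mathbb{R}^2)}$ contained in Lemma~\ref{lemma:23}, gives $B\le P$. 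The second step invokes Lemma~\ref{lemma:23} once more together with Cauchy--Schwarz:
\begin{equation*}
  |\cos(\mu\pi)|\,P^{2}=\bigl|\bigl({}_{a(y)}D_x^{\mu}u,\ {}_{x}D_{b(y)}^{\mu}u\bigr)\bigr|\le AB ,
\end{equation*}
where $|\cos(\mu\pi)|>0$ precisely because $\mu\ne n-1/2$.

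The last step is bookkeeping (and trivial if $P=0$). From $|\cos(\mu\pi)|P^{2}\le AB\le BP$ I get $|\cos(\mu\pi)|P\le B$, and combining with $A\le P$ yields $|\cos(\mu\pi)|A\le B$, i.e.\ $\cos^{2}(\mu\pi)\|{}_{a(y)}D_x^{\mu}u\|^{2}\le\|{}_{x}D_{b(y)}^{\mu}u\|^{2}$. Symmetrically, $|\cos(\mu\pi)|P^{2}\le AB\le AP$ gives $|\cos(\mu\pi)|P\le A$, and with $B\le P$ this produces $\|{}_{x}D_{b(y)}^{\mu}u\|^{2}\le\sec^{2}(\mu\pi)\|{}_{a(y)}D_x^{\mu}u\|^{2}$. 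Thus one may take $C_1=\cos^{2}(\mu\pi)$ and $C_2=\sec^{2}(\mu\pi)$, which depend only on $\mu$; density then extends the estimates to all of $J_{L,0}^{\mu}(\Omega)\cap J_{R,0}^{\mu}(\Omega)$. I expect the only genuinely delicate point to be the identification in the first step --- that the left (resp.\ right) Riemann--Liouville derivative of the zero extension, restricted to $\Omega$, reproduces the truncated operator ${}_{a(y)}D_x^{\mu}$ (resp.\ ${}_{x}D_{b(y)}^{\mu}$) on $\Omega$ --- which is exactly where convexity of $\Omega$ and the homogeneous Dirichlet condition enter; everything after that is Cauchy--Schwarz.
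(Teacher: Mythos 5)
Your proof is correct, but it is not the paper's route: the paper offers no argument at all for this lemma, simply deferring to Lemma~5.4 of Ervin and Roop \cite{Ervin2007}, whose proof lives inside the Fourier-transform framework used to establish the equivalence of $J_{L,0}^{\mu}$, $J_{R,0}^{\mu}$ and $H_0^{\mu}$. What you do instead is derive the two-sided bound self-containedly from material already quoted in the paper: the identification (valid by convexity of $\Omega$, so that each horizontal slice is the interval $[a(y),b(y)]$, and by compact support of $u$) of ${}_{-\infty}D_x^{\mu}\hat u$ and ${}_{x}D_{+\infty}^{\mu}\hat u$ restricted to $\Omega$ with the truncated operators, giving $A\le P$ and $B\le P$, followed by Lemma~\ref{lemma:23} and Cauchy--Schwarz to get $|\cos(\mu\pi)|P^2\le AB$, whence $\cos^2(\mu\pi)A^2\le B^2\le \sec^2(\mu\pi)A^2$. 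This buys explicit constants $C_1=\cos^2(\mu\pi)$, $C_2=\sec^2(\mu\pi)$ depending only on $\mu$, avoids re-deriving any Fourier multiplier identities beyond what Lemma~\ref{lemma:23} already packages, and your opening remark is apt: Lemma~\ref{lemma:relation} alone cannot give this statement because it controls only the combined (both-direction) seminorms and only their maximum. The one point worth tightening is the density step: a function in $J_{L,0}^{\mu}(\Omega)\cap J_{R,0}^{\mu}(\Omega)$ is a priori a limit of possibly different $C_0^{\infty}$ sequences in the two norms, so to pass both sides of the inequality to the limit you should invoke the equivalence of $J_{L,0}^{\mu}$, $J_{R,0}^{\mu}$ and $H_0^{\mu}$ for $\mu\ne n-1/2$ (stated just before Lemma~\ref{lemma:relation}) to get a single approximating sequence converging in both norms; with that sentence added, the argument is complete.
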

\begin{proof}
  See Lemma 5.4 in Ref.~\cite{Ervin2007}. 
\end{proof}}

\section{Discrete scheme and implementation}\label{sec:scheme} In this section,
we present the detail of BEGM and then analyze it briefly.  We begin with the
variational formulation of Eq.~\eqref{eq:equation}: 

{\it Find $u(t) \in U$ such that
\begin{equation}\label{eq:variation}
  \begin{aligned}
    (u_t, v) + a(u, v) &= l(v),     &\forall v \in V, t \in (0, T],  \\
    (u(\cdot, 0), v)            &= (u_0, v), &\forall v \in V,
  \end{aligned}
\end{equation}
where 
$U = L^2(0, T; V)$, $V = H_{0}^{\alpha}(\Omega) \cap H_{0}^{\beta}(\Omega)$,
and $a(u, v)$, $l(v)$ are \mTwo{given} as
\begin{equation} \label{eq:auv}
  \begin{aligned}
    a(u,v) 
    &= K_x c_{\alpha} \big(({}_{a(y)}D_{x}^{\alpha}u, {}_{x}D_{b(y)}^{\alpha}v) 
    + ({}_{x}D_{b(y)}^{\alpha}u, {}_{a(y)}D_{x}^{\alpha}v)\big) \\
    &\quad + K_y c_{\beta}
    \big(({}_{c(x)}D_{y}^{\beta}u, {}_{y}D_{d(x)}^{\beta}v) 
    + ({}_{y}D_{d(x)}^{\beta}u, {}_{c(x)}D_{y}^{\beta}v)\big),  \\
  \end{aligned}
\end{equation}
\begin{equation}
    l(v) = \int_{\Omega}F(u)vdx + \int_{\Omega}fvdx.
  \end{equation} }

According to properties of fractional derivatives, $a(u, v)$ is bilinear,
continuous and coercive, which will be proved in Section~\ref{sec:stability}.
In the following sections, assume that the domain $\Omega$ is polygonal such
that the boundary is exactly represented by boundaries of triangles.  Let
${\{\mathcal{T}_h\}}$ be a family of shape regular triangulations of $\Omega$,
and $h$ be the maximum diameter of elements in $\mathcal{T}_h$.  For finite
element methods, the idea is to approximate Eq.~\eqref{eq:auv} by
conforming, finite dimensional space $V_h \in V$.  Then we define the test
space 
\begin{equation}
  V_h = \{v_h: v_h \in C(\Omega),v_h \in V, v_h|_K\in P_s(K), \forall K \in \mathcal{T}_h\}.
\end{equation}
where $P_s(K)$ is the set of polynomials of which degrees are at most $s$ in $K$.

In the following subsections, let $\tau$ be the time step size and $n_T$ be a
positive integer with $\tau = T/n_T$ and \mOne{$t_n = n\tau$} for $n = 0, 1,
\dots, n_T$.  For the function $u(x, y, t)$ and $f(x, y, t)$, denote $u^n =
u^n(\cdot)=u(\cdot, t_n)$, $f^n = f(\cdot, t_n)$, and $\bar \partial_t u^n =
\frac{u^n-u^{n-1}}{\tau}$.

\subsection{Backward Euler Galerkin method}\label{ssec:begm}
Using backward Euler method on Eq.~\eqref{eq:variation}, 
\mTwo{we get a semi-discrete approximation} for Eq.~\eqref{eq:equation}
\begin{equation}\label{eq:begm}
  (\bar \partial_t u^n, v_h) + a(u^n, v_h) = \big(F(u^n),v_h\big) 
  + \big(f(x,y,n\tau), v_h\big), \quad \forall v_h \in V_h.
\end{equation}
Because of the nonlinear term $F(u)$, solving Eq.~\eqref{eq:begm} is more
difficult than the linear case.  Here, a linearization method is suggested to
approximate $F(u)$ accurately.
\mTwo{
Assuming $F(u) \in C^1(\Theta)$, $F''(u) \in L^\infty(\Theta)$, and $u_t(x,t)$
is bounded, by Taylor's formula we obtain}
\begin{equation} \label{eq:taylor1}
  F(u^{n}) = F(u^{n-1}) + F'(u^{n-1})(u^{n} - u^{n-1}) + O(\tau^2).
\end{equation}
Insert \eqref{eq:taylor1} to \eqref{eq:begm} and drop the term $O(\tau^2)$,
then we have
\begin{equation*} 
  \begin{aligned}
    (\bar \partial_t u^n, v_h) &+ a(u^n, v_h) - \big(F'(u^{n-1})u^{n}, v_h\big) \\
   &= \big(F(u^{n-1}) - F'(u^{n-1})u^{n-1}, v_h\big) + (f^{n},v_h). 
  \end{aligned}
\end{equation*}
\mTwo{So we get the fully-discrete scheme}:
find $u_h^n \in V_h$ for $n = 1,2,\dots, n_T$ such that
\begin{equation}\label{eq:scheme1}
  \left\{
  \begin{aligned}
   &(\bar \partial_t u_h^n, v_h)+ a(u_h^n, v_h) - \big(F'(u_h^{n-1})u_h^{n}, v_h\big)\\
   & \qquad \qquad = \big(F(u_h^{n-1}) - F'(u_h^{n-1})u_h^{n-1}, v_h\big) + (f^{n},v_h), 
     \quad v_h \in V_h,\\
   &u_h^0 = P\varphi(x, y),
  \end{aligned}
  \right.
\end{equation}
where $P$ is a projection operator. 
\mTwo{
We have obtained the backward Euler Galerkin method (BEGM) as desired.}

\subsection{Implementation of BEGM}\label{ssec:implementation}
Here, we turn to the implementation of BEGM, which works well on any convex
domain with unstructured meshes.

For finite element subspace $V_h$, the set of nodes,
$\{(x_k, y_k):k\in\mathcal{N}\}$, is assumed to consist of the vertices of the
principal lattice on each of the elements and includes the vertices of the
elements. Let $\varphi_k(x_l, y_l) = \delta_{kl}$, $k,l\in \mathcal{N}$, where
$\delta_{kl}$ is the Kronecker symbol, be the \mMy{nodal} basis function. For
each time step, we can expand the discrete solution $u_h^n$ as
\begin{equation}\label{eq:uhn}
  u^n_h = \sum\limits_{k\in \mathcal{N}} u^n_k \varphi_k(x, y).
\end{equation}
where $u^n_k$ is unknown coefficients.  Inserting~\eqref{eq:uhn}
into~\eqref{eq:scheme1}, we obtain

\begin{equation}
  \begin{aligned}
\sum\limits_{k\in \mathcal{N}} \Big( (\varphi_k, \varphi_l)  +&  
\tau  a(\varphi_k, \varphi_l)  
- \tau   \left(F'(u_h^{n-1})\varphi_k, \varphi_l\right) \Big) u^n_k \\
&= \sum\limits_{k\in \mathcal{N}} \Big( (\varphi_k, \varphi_l) 
- \tau \left(F'(u^{n-1}_h)\varphi_k, \varphi_l\right) \Big) u^{n-1}_k \\
&\quad\mTwo{+ \tau \left(F(u^{n-1}_h), \varphi_l\right)} + \tau(f^{n}, \varphi_l), \quad \forall l \in \mathcal{N},
  \end{aligned}
\end{equation}
which we can write in the matrix-vector form as
\begin{equation}
  \begin{aligned}
    (\bm{M} + \tau \bm{A} - \tau \bm{D}^{n-1})\bm{u}^n = 
    (\bm{M} - \tau \bm{D}^{n-1})\bm{u}^{n-1} \mTwo{+ \tau \bm{b}_1^{n-1} + \tau \bm{b}_2^{n}},
  \end{aligned}
\end{equation}
where ${\bm u^n}=(u_k^n)$ is the unknown vector, $\bm{M}=(m_{kl})$ is the mass matrix with elements $m_{kl} = (\varphi_l, \varphi_k)$, 
$\bm{A}=(a_{kl})$ the stiffness matrix with elements $a_{kl} = a(\varphi_l, \varphi_k)$, 
$\bm{D}^{n-1}=(d_{kl}^{n-1})$ the matrix with elements 
$d_{kl}^{n-1} = \big(F'(u^{n-1})\varphi_l, \varphi_k\big)$, 
$\bm{b}_1^{n-1}=(b_{1,k})$ the vector with elements 
$b_{1,k}^{n-1} = \big(F(u^{n-1}), \varphi_k\big)$, and
$\bm{b}_2^{n}=(b_{2,k}^{n})$ the vector with elements $b_{2,k}^{n} = (f^{n}, \varphi_k)$.
Among them, the matrix $\bm{A}$ is more difficult to calculate because 
of the non-locality of fractional derivatives. 

Now, we focus on the computation of $\bm{A}$. The elements of $\bm{A}$ is 
\begin{equation}\label{eq:comauv} 
  \begin{aligned}
    a_{kl} = a(\varphi_l,\varphi_k) &= K_x c_{\alpha}
    \big(({}_{a(y)}D_{x}^{\alpha}\varphi_l, {}_{x}D_{b(y)}^{\alpha}\varphi_k) 
    + ({}_{x}D_{b(y)}^{\alpha}\varphi_l, {}_{a(y)}D_{x}^{\alpha}\varphi_k)\big) \\
    &\quad + K_y c_{\beta}
    \big(({}_{c(x)}D_{y}^{\beta}\varphi_l, {}_{y}D_{d(x)}^{\beta}\varphi_k) 
    + ({}_{y}D_{d(x)}^{\beta}\varphi_l, {}_{c(x)}D_{y}^{\beta}\varphi_k)\big).  \\
  \end{aligned}
\end{equation}
Considering the similarity of four terms in the right hand of~\eqref{eq:comauv}, 
we only illustrate the computing process of 
$({}_{a(y)}D_{x}^{\alpha}\varphi_l, {}_{x}D_{b(y)}^{\alpha}\varphi_k)$ as an example.
Using Gaussian quadrature, we obtain
\begin{equation}
  \begin{aligned}
    ({}_{a(y)}D_{x}^{\alpha}\varphi_l, {}_{x}D_{b(y)}^{\alpha}\varphi_k)
    &= \int_{\varOmega}{}_{a(y)}D_{x}^{\alpha}\varphi_l\; {}_{x}D_{b(y)}^{\alpha}\varphi_k dxdy  \\
    &= \sum_{K\in\mathcal{T}}\int_{K}{}_{a(y)}D_{x}^{\alpha}\varphi_l \;{}_{x}D_{b(y)}^{\alpha}\varphi_k dxdy \\
    &\approx \sum_{K\in\mathcal{T}}\sum_{(x_i,y_i)\in G_K} \omega_i 
      \; {}_{a(y)}D_{x}^{\alpha}\varphi_l|_{(x_i, y_i)} 
      \; {}_{x}D_{b(y)}^{\alpha}\varphi_k|_{(x_i, y_i)}
  \end{aligned}
\end{equation}
where $G_K$ is the set of all Gaussian points in element $K$, 
and $\omega_i$ is weight of Gaussian point $(x_i, y_i)$. 
How to compute ${}_{a(y)}D_{x}^{\alpha}\varphi_l|_{(x_i, y_i)}$ and
${}_{x}D_{b(y)}^{\alpha}\varphi_k|_{(x_i, y_i)}$ is the hardest and most
\mMy{critical} issue.  

\mOne{
First of all, we need to
locate the intersection points between the integral path
and the element boundaries. Unstructured meshes make this work
more difficult. To improve the searching efficiency, we should avoid
searching elements aimlessly. 
Algorithm~\ref{algorithm1} shows how to calculate the intersection points 
$(x_i^j, y_i)$ on the integral path for $x$-left
fractional derivative of basis function at Gaussian points in $K$. For other
fractional derivatives, the algorithms are similar.}

\renewcommand{\algorithmicensure}{\textbf{Output:}}
\begin{algorithm}
      \mTwo{
    \caption{Calculate integral path for $x$-left fractional derivative of basis
      function at Gaussian points in ${K}$}\label{algorithm1}
    \begin{algorithmic}[1]
      \REQUIRE Triangulation  $\mathcal{T}$ with its vertices set $V_\mathcal{T}$, Element $K \in \mathcal{T}$,  Gaussian points $G_{K}$ on $K$.
    \ENSURE Ordered intersection points set $I_i$ for each Gaussian point in $G_{K}$.
    \STATE Let the vertices of $K$ be $(x^K_1, y^K_1), (x^K_2, y^K_2), (x^K_3, y^K_3)$.
    \STATE Set $x_{min} = \min\{x^K_1, x^K_2, x^K_3\}$, $x_{max} = \max\{x^K_1, x^K_2, x^K_3\}$.
    \STATE Set $y_{min} = \min\{y^K_1, y^K_2, y^K_3\}$, $y_{max} = \max\{y^K_1, y^K_2, y^K_3\}$.
      \STATE Set $\Omega_{K}$ be the influence domain 
        $\{(x,y) : y\in[y_{min}, y_{max}], x < x_{max}\}$.
      \STATE Set $E_{K} = \{K' \in \mathcal{T} :  K' \cap \Omega_K \ne
    \emptyset\}$
        \FOR{each Gaussian points $(x_i, y_i) \in G_{K}$}
          \FOR{each element $K' \in E_{K}$}
          \STATE Get intersection points  of edges of $K'$ with line $y = y_i, x<x_{max}$. 
          \STATE Update $I_i$ by the intersection points.
          \ENDFOR
          \STATE Sort $I_i$ and erase repeated points in $I_i$.
        \ENDFOR
        \RETURN $\{I_i\}$ 
    \end{algorithmic}}
\end{algorithm}

\begin{figure}
  \centering
  \includegraphics[width=0.28\textwidth]{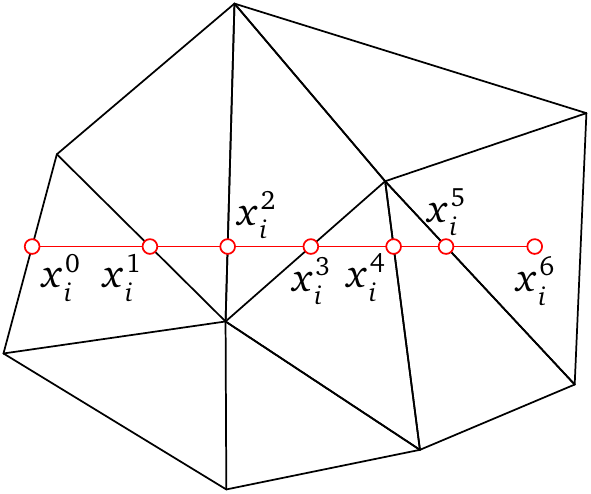}
  \caption{The path to calculate the left derivative. ($x_i^0 = a(y_i)$, $x_i^6 = x_i$) }\label{fig:mesh}
\end{figure}
\mOne{As long as we have found the intersection points, we can compute the value of 
 ${}_{a(y)}D_{x}^{\alpha}\varphi_l|_{(x_i, y_i)}$ and
${}_{x}D_{b(y)}^{\alpha}\varphi_k|_{(x_i, y_i)}$. 
Take ${}_{a(y)}D_{x}^{\alpha}\varphi_l|_{(x_i, y_i)}$ as an example.
Suppose the segment $y=y_i, a(y_i) \le x \le x_i$ intersects
with edges of all triangles at $n_i$ points, and arrange them orderly like
$x_i^0 < x_i^1 <x_i^2< \dots<x_i^{n_i}$, as show in Fig.~\ref{fig:mesh}.}
The value of  ${}_{a(y)}D_{x}^{\alpha}\varphi_l|_{(x_i, y_i)}$ is
\begin{equation}
  \begin{aligned}
    {}_{a(y)}D_{x}^{\alpha}\varphi_l|_{(x_i, y_i)}
    & = {}_{a(y_i)}D_{x}^{\alpha}\varphi_l(x,y_i)|_{x = x_i} \\
    & = {\Big(\frac{1}{\Gamma(1-\alpha)}\frac{d}{dx}
    \int_{a(y_i)}^{x}{(x-t)}^{-\alpha}\varphi_l(t, y_i)\,dt\Big)}_{x=x_i} \\
    & = \sum^{n_i}_{j=1} {\Big(\frac{1}{\Gamma(1-\alpha)}\frac{d}{dx}\int_{x_i^{j-1}}^{x_i^j}
    {(x-t)}^{-\alpha}\varphi_l(t, y_i) \, dt\Big)}_{x=x_i} \\
    & = \mTwo{\sum^{n_i}_{j=1} S_j\big(x;(x_i,y_i)\big)\big|_{x = x_i}}.
    \end{aligned}
\end{equation}
where 
\begin{equation}
   S_j(x;(x_i,y_i)) = \frac{1}{\Gamma(1-\alpha)}\frac{d}{dx}\int_{x_i^{j-1}}^{x_i^j}
    {(x-t)}^{-\alpha}\varphi_l(t, y_i) \, dt.
\end{equation}
\mMy{In the following, we demonstrate  how to calculate $S_j(x;(x_i,y_i))$}.  
Integrating by parts, we notice the fact that
\begin{equation}\label{eq:sa}
  \begin{aligned}
    \frac{d}{dx}\int_{a}^{b}(x-t)^{-\alpha}f(t)\,dt =&\ - f(t)(x-t)^{-\alpha}\big|_{t=a}^{t=b} \\
    &+ \mMy{\frac{1}{1-\alpha}}
    \frac{d}{dx}\int_{a}^{b}(x-t)^{1-\alpha}f'(t)\,dt, \quad x\notin [a,b],
  \end{aligned}
\end{equation}
and
\begin{equation}\label{eq:sb}
    \frac{d}{dx}\int_{a}^{x}(x-t)^{-\alpha}f(t)\,dt = f(a)(x-a)^{-\alpha} 
    + \mMy{\frac{1}{1-\alpha}}\frac{d}{dx}\int_{a}^{x}(x-t)^{1-\alpha}f'(t)\,dt.
\end{equation}
when  $f(x) \in C^1[a,b]$ and $\alpha < 1$.
\mTwo{It is obvious that basis function $\varphi_l(x, y_i)$ is infinitely differentiable in
  $[x_i^{j-1},x_i^j]$ for fixed $y_i$. Define $f(x)=\varphi_l(x, y_i)$, then
  $f(x) \in C^{\infty}[x_i^{j-1},x_i^j]$.}  Using formulas~\eqref{eq:sa} and
  \eqref{eq:sb}, we have
\begin{equation} \label{eq:pfd}
  \begin{aligned}
    S_j(x;(x_i,y_i)) &= \frac{1}{\Gamma(1-\alpha)}\frac{d}{dx}\int_{x_i^{j-1}}^{x_i^j}(x-t)^{-\alpha}f(t)\,dt \\
    &= \frac{-1}{\Gamma(1-\alpha)}f(t)(x-t)^{-\alpha}|_{t=x_i^{j-1}}^{t=x_i^j} 
      + \frac{-1}{\Gamma(2-\alpha)}f'(t)(x-t)^{1-\alpha}|_{t=x_i^{j-1}}^{t=x_i^j} \\
    &+ \frac{1}{\Gamma(3-\alpha)}\frac{d}{dx}\int_{x_i^{j-1}}^{x_i^j}f''(t)(x-t)^{2-\alpha}\,dt
       \qquad (x_i^j \ne x, x\notin[x_i^{j-1},x_i^j]),\\
  \end{aligned}
\end{equation}
and
\begin{equation} \label{eq:pfd2}
  \begin{aligned}
    S_j(x;(x_i,y_i)) &= \frac{1}{\Gamma(1-\alpha)}\frac{d}{dx}\int_{x_i^{j-1}}^{x}(x - t)^{\alpha}f(t)\,dt \\
    &= \frac{1}{\Gamma(1-\alpha)}f(x_i^{j-1})(x-x_i^{j-1})^{-\alpha} 
      + \frac{1}{\Gamma(2-\alpha)}f'(x_i^{j-1})(x-x_i^{j-1})^{1-\alpha} \\
    &+ \frac{1}{\Gamma(3-\alpha)}\frac{d}{dx}\int_{x_i^{j-1}}^{x}f''(t)(x-t)^{2-\alpha}\,dt,
       \qquad (x_i^j = x).\\
  \end{aligned}
\end{equation}
If we use linear triangular element, i.e.\,$f(x)$ is linear function, then
there are two terms in~\eqref{eq:pfd} without last line. So we can get
the derivative by adding the value in all intervals.

\section{Stability and convergence}\label{sec:stability}
In this section we \mOne{analyze} the stability and convergence of BEGM.
In the following part, let $\lambda = \max \{\alpha, \beta\}$ and 
the constant $C$ may have different value in different context.  

According to the bilinear form 
$a(u, v)$, we define seminorm $|\cdot|_{(\alpha, \beta)}$ and norm 
$\|\cdot\|_{(\alpha, \beta)}$ as follows\cite{Zeng2014}:
\begin{equation}
  \begin{aligned}
    |u|_{(\alpha, \beta)} &= 
    {(K_x \|{}_{a(y)}D_{x}^{\alpha}u\|^2 + K_y \|{}_{c(x)}D_{y}^{\beta}u\|^2)}^{1/2}, \\
    \|u\|_{(\alpha, \beta)} &=  {(\|u\|^2 + |u|_{(\alpha, \beta)}^2)}^{1/2}.
  \end{aligned}
\end{equation}
where \mTwo{$\|\cdot\|=\|\cdot\|_{L_2(\Omega)} = \big(\int_\Omega |\cdot|^2 dx\big)^{1/2}$}. 
Due to Lemma~\ref{lemma:equivalent}, seminorm
$|\cdot|_{(\alpha,\beta)}$ and norm $\|\cdot\|_{(\alpha,\beta)}$ are equivalent
if \mTwo{$u\in H_0^{\alpha}(\Omega) \cap H_0^{\beta}(\Omega)$}, which is
described as the following lemma.
\begin{lemma}\label{lemma:eqnorm}
  Suppose that $\Omega$ is convex domain, $u \in V$. Then there exists positive
  constants $C_1 < 1$ and $C_2$ independent of $u$, such that
  \begin{equation}
    C_1 \|u\|_{(\alpha,\beta)} \le |u|_{(\alpha,\beta)} \le \|u\|_{(\alpha,\beta)} 
        \le C_2 |u|_{H^{\lambda}(\Omega)}. 
  \end{equation}
\end{lemma}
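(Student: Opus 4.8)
The plan is to prove the three inequalities separately, since only the outer two require any work. The middle bound $|u|_{(\alpha,\beta)}\le\|u\|_{(\alpha,\beta)}$ is immediate from the definition $\|u\|_{(\alpha,\beta)}^2=\|u\|^2+|u|_{(\alpha,\beta)}^2$. For the left inequality I would invoke the fractional Poincar\'e--Friedrichs inequality of Lemma~\ref{lemma:equivalent}: since $u\in V\subset H_0^{\alpha}(\Omega)$, that lemma produces a constant $C$ with $\|u\|^2\le C^2\|{}_{a(y)}D_{x}^{\alpha}u\|^2\le (C^2/K_x)\,|u|_{(\alpha,\beta)}^2$, whence $\|u\|_{(\alpha,\beta)}^2\le(1+C^2/K_x)\,|u|_{(\alpha,\beta)}^2$. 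This gives the bound with $C_1=(1+C^2/K_x)^{-1/2}$, which is automatically strictly less than $1$ because $C^2/K_x>0$. (One could instead split the bound between the $x$- and $y$-terms, but the crude estimate already suffices.)

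For the right inequality $\|u\|_{(\alpha,\beta)}\le C_2|u|_{H^{\lambda}(\Omega)}$ I would first reduce the directional derivative norms to $H^{s}$-seminorms. By the space equivalence recorded before Lemma~\ref{lemma:relation}, $u\in H_0^{\alpha}\cap H_0^{\beta}$ lies in all of the $J_{L,0}$, $J_{R,0}$ spaces of orders $\alpha,\beta$, so Lemma~\ref{lemma:relation} applies with $\mu=\alpha$ and $\mu=\beta$ (both in $(1/2,1)$, hence $\ne n-1/2$); together with the trivial bounds $\|{}_{a(y)}D_{x}^{\alpha}u\|\le|u|_{J_L^{\alpha}(\Omega)}$ and $\|{}_{c(x)}D_{y}^{\beta}u\|\le|u|_{J_L^{\beta}(\Omega)}$ this yields $\|{}_{a(y)}D_{x}^{\alpha}u\|\le C|u|_{H^{\alpha}(\Omega)}$ and $\|{}_{c(x)}D_{y}^{\beta}u\|\le C|u|_{H^{\beta}(\Omega)}$. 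The two remaining ingredients I would establish are: (i) $\|u\|\le C|u|_{H^{\lambda}(\Omega)}$, obtained by applying Lemma~\ref{lemma:equivalent} with $\mu=\lambda$ (note $u\in H_0^{\lambda}(\Omega)$ since $\lambda=\max\{\alpha,\beta\}$) and then passing from the $J_L^{\lambda}$-seminorm to the $H^{\lambda}$-seminorm via Lemma~\ref{lemma:relation}; and (ii) the monotonicity $|u|_{H^{s}(\Omega)}\le C|u|_{H^{\lambda}(\Omega)}$ for $0<s\le\lambda$. For (ii) I would use the Fourier definition of $H^{\mu}$: writing $g=\mathcal{F}(\hat u)$ and splitting $\int_{\mathbb{R}^2}|\omega|^{2s}|g|^2\,d\omega$ over $\{|\omega|\le1\}$ and $\{|\omega|>1\}$, bound $|\omega|^{2s}\le1$ on the former (Plancherel then gives $\le\|\hat u\|_{L^2(\mathbb{R}^2)}^2=\|u\|_{L^2(\Omega)}^2$) and $|\omega|^{2s}\le|\omega|^{2\lambda}$ on the latter, obtaining $|u|_{H^{s}(\Omega)}^2\le\|u\|^2+|u|_{H^{\lambda}(\Omega)}^2$, and then absorb $\|u\|^2$ using (i). Combining everything, $\|u\|_{(\alpha,\beta)}^2=\|u\|^2+K_x\|{}_{a(y)}D_{x}^{\alpha}u\|^2+K_y\|{}_{c(x)}D_{y}^{\beta}u\|^2\le C\big(|u|_{H^{\lambda}(\Omega)}^2+|u|_{H^{\alpha}(\Omega)}^2+|u|_{H^{\beta}(\Omega)}^2\big)\le C_2^2\,|u|_{H^{\lambda}(\Omega)}^2$ by (ii).

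I expect step (ii) to be the main obstacle: all the other estimates are essentially direct appeals to Lemmas~\ref{lemma:relation} and~\ref{lemma:equivalent}, whereas the monotonicity of the fractional seminorm in its order needs the frequency-splitting argument and relies crucially on the zero-extension built into the definition of $H^{\mu}(\Omega)$ (so that Plancherel gives $\|\hat u\|_{L^2(\mathbb{R}^2)}=\|u\|_{L^2(\Omega)}$) together with the Poincar\'e inequality to dispose of the low-frequency contribution. Convexity of $\Omega$ is used only to ensure that the one-sided endpoints $a(y),b(y),c(x),d(x)$ are single-valued, so that the directional fractional derivative spaces and all the cited lemmas are legitimately available; no further geometric information about $\Omega$ is needed.
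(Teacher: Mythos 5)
Your proposal is correct, and for the first two inequalities it coincides with the paper's argument: the middle bound is read off the definition, and the left bound comes from the fractional Poincar\'e--Friedrichs inequality (Lemma~\ref{lemma:equivalent}) giving $\|u\|\le C|u|_{(\alpha,\beta)}$ and hence $C_1=(1+C^2)^{-1/2}<1$ (your version with the explicit $K_x$ is the same computation). Where you diverge is the rightmost bound. The paper stays on the ``directional'' side throughout: it invokes the second inequality in Lemma~\ref{lemma:equivalent}, namely $\|{}_{a(y)}D_{x}^{s}u\|\le C\|{}_{a(y)}D_{x}^{\mu}u\|$ for $s<\mu$, to raise the orders $\alpha,\beta$ to $\lambda$ directly inside the $J_L$-seminorm, obtaining $|u|_{(\alpha,\beta)}\le C|u|_{J_L^{\lambda}(\Omega)}$, and then applies Lemma~\ref{lemma:relation} a single time at order $\lambda$ to land on $|u|_{H^{\lambda}(\Omega)}$. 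You instead pass to the Fourier side at each order separately (Lemma~\ref{lemma:relation} at $\mu=\alpha$, $\mu=\beta$, and $\mu=\lambda$) and then prove the order-monotonicity $|u|_{H^{s}}\le C|u|_{H^{\lambda}}$ yourself by the low/high frequency splitting plus Plancherel and Poincar\'e to absorb the low-frequency $\|u\|^2$ term. Both routes are sound; the paper's is shorter because the monotonicity you identify as the ``main obstacle'' is already packaged in the second inequality of Lemma~\ref{lemma:equivalent}, so no frequency-splitting argument is needed, whereas your version makes that interpolation-type fact explicit and self-contained at the price of extra applications of the norm-equivalence lemma.
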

\begin{proof}
  From Lemma~\ref{lemma:equivalent}, we immediately have
  \begin{equation*}
    \|u\| \le C|u|_{(\alpha,\beta)},
  \end{equation*}
  where $C$ is independent of $u$. 
  Therefore, there exists positive constants $C_1 < 1$ independent of $u$, such that
  \begin{equation} \label{eq:localeq}
    C_1 \|u\|_{(\alpha,\beta)} \le |u|_{(\alpha,\beta)}, \quad C_1 = \frac{1}{\sqrt{C^2+1}}
  \end{equation}

  The inequality $|u|_{(\alpha,\beta)} \le \|u\|_{(\alpha,\beta)}$ is obvious
  from their definitions.
  Using Lemma~\ref{lemma:equivalent} again, we find
  \begin{equation}\label{eq:above}
    |u|_{(\alpha,\beta)} \le C|u|_{J_L^{\lambda}(\Omega)}.
  \end{equation}
  Combining inequalities \eqref{eq:localeq}, \eqref{eq:above} and Lemma~\ref{lemma:relation},
  we have
  \begin{equation}
    \|u\|_{(\alpha,\beta)} \le \frac{1}{C_1}|u|_{(\alpha,\beta)} 
    \le \frac{C}{C_1}|u|_{J_L^{\lambda}(\Omega)} \le C_2 |u|_{H^{\lambda}(\Omega)}.
  \end{equation}
  The proof is completed.
\end{proof}

By Lemma~\ref{lemma:eqnorm}, we can obtain the following properties of $a(u,v)$.
\mTwo{
\begin{theorem}
  The bilinear form $a(u, v)$ is continuous and coercive, i.e.\ 
\begin{equation}\label{eq:property}
  a(u, v) \le \mathcal{A} \|u\|_{(\alpha, \beta)}\|v\|_{(\alpha, \beta)}, \medspace
  a(u, u) \ge \mathcal{B} \|u\|_{(\alpha, \beta)}^{2}, \quad 
  \forall u\in H_0^{\alpha}(\Omega) \cap H_0^{\beta}(\Omega),
\end{equation}
where $\mathcal{A}$ and $\mathcal{B}$ are positive constants independent of $u$. 
\end{theorem}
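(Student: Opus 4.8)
The plan is to establish the two inequalities in~\eqref{eq:property} separately, in each case reducing the four-term bilinear form $a(u,v)$ to the quantities controlled by the seminorm $|\cdot|_{(\alpha,\beta)}$ and then invoking Lemma~\ref{lemma:eqnorm} to pass to the full norm $\|\cdot\|_{(\alpha,\beta)}$.

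For continuity, I would first recall that by Lemma~\ref{lemma:var} each inner product in~\eqref{eq:auv} of the mixed form $({}_{a(y)}D_x^\alpha u, {}_x D_{b(y)}^\alpha v)$ can also be written as an action of a $2\alpha$-derivative, but the cleaner route is to apply the Cauchy--Schwarz inequality directly to each of the four terms, e.g.
\begin{equation*}
  \bigl({}_{a(y)}D_{x}^{\alpha}u, {}_{x}D_{b(y)}^{\alpha}v\bigr)
  \le \|{}_{a(y)}D_{x}^{\alpha}u\|\,\|{}_{x}D_{b(y)}^{\alpha}v\|,
\end{equation*}
and then use Lemma~\ref{lemma:add} to replace the right-type norms $\|{}_{x}D_{b(y)}^{\alpha}v\|$ and $\|{}_{y}D_{d(x)}^{\beta}v\|$ by constant multiples of the left-type norms $\|{}_{a(y)}D_{x}^{\alpha}v\|$ and $\|{}_{c(x)}D_{y}^{\beta}v\|$. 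Bounding every factor $K_x c_\alpha$ and $K_y c_\beta$ by a common constant and recombining, one gets $a(u,v)\le C\,|u|_{(\alpha,\beta)}|v|_{(\alpha,\beta)} \le C\,\|u\|_{(\alpha,\beta)}\|v\|_{(\alpha,\beta)}$, which fixes $\mathcal{A}$. Care is needed with signs: since $1/2<\alpha,\beta<1$ we have $\cos(\alpha\pi)<0$, so $c_\alpha=\tfrac{1}{2\cos(\alpha\pi)}<0$; this does not affect continuity (absolute values), but it does affect coercivity and must be tracked.

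For coercivity, set $v=u$ in~\eqref{eq:auv}. The key is Lemma~\ref{lemma:23}, which gives
\begin{equation*}
  \bigl({}_{a(y)}D_{x}^{\alpha}u,{}_{x}D_{b(y)}^{\alpha}u\bigr)
  = \cos(\alpha\pi)\,\|{}_{-\infty}D_x^{\alpha}\hat u\|_{L^2(\mathbb{R}^2)}^2,
\end{equation*}
and the analogous identity in $y$; adding the symmetric partner just doubles each term. Multiplying by $K_x c_\alpha = \tfrac{K_x}{2\cos(\alpha\pi)}$ cancels the (negative) cosine and leaves the manifestly nonnegative quantity $K_x\|{}_{-\infty}D_x^{\alpha}\hat u\|^2$, and similarly for $y$; hence $a(u,u) = K_x\|{}_{-\infty}D_x^{\alpha}\hat u\|^2 + K_y\|{}_{-\infty}D_y^{\beta}\hat u\|^2 \ge 0$, and in fact this is comparable to $|u|_{H^\lambda(\Omega)}^2$ up to the Poincaré-type control of the lower-order derivative by the higher one (Lemma~\ref{lemma:equivalent} / Lemma~\ref{lemma:add}, together with the equivalence $\|{}_{-\infty}D_x^\alpha \hat u\| = \|{}_{a(y)}D_x^\alpha u\|$ coming from the zero extension). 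Then Lemma~\ref{lemma:eqnorm} converts $|u|_{H^\lambda(\Omega)}^2 \gtrsim \|u\|_{(\alpha,\beta)}^2$, giving the constant $\mathcal{B}$.

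The main obstacle is the bookkeeping around the negative constants $c_\alpha,c_\beta$ and making sure the $\alpha$-mismatch between $K_x$ and $K_y$ (the fact that $\alpha\neq\beta$ in general, so one works with $\lambda=\max\{\alpha,\beta\}$) is handled consistently: specifically, showing that $K_x\|{}_{a(y)}D_x^\alpha u\|^2 + K_y\|{}_{c(x)}D_y^\beta u\|^2 \ge C|u|_{H^\lambda(\Omega)}^2$ requires using Lemma~\ref{lemma:equivalent} to dominate the lower-order of the two fractional seminorms by its higher-order counterpart before appealing to Lemma~\ref{lemma:relation}. Everything else is a routine chain of Cauchy--Schwarz, Lemma~\ref{lemma:add}, and Lemma~\ref{lemma:eqnorm}; I would present continuity first (shorter) and then coercivity, and finally read off $\mathcal{A},\mathcal{B}$ from the accumulated constants.
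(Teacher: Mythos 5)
Your continuity argument is essentially the paper's own: Cauchy--Schwarz on each of the four terms, Lemma~\ref{lemma:add} to trade the right-sided norms for left-sided ones, and then $|u|_{(\alpha,\beta)}\le\|u\|_{(\alpha,\beta)}$; that part is fine.

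The gap is in your coercivity step. After Lemma~\ref{lemma:23} cancels the cosine you correctly reach $a(u,u)=K_x\|{}_{-\infty}D_{x}^{\alpha}\hat u\|^2_{L^2(\mathbb{R}^2)}+K_y\|{}_{-\infty}D_{y}^{\beta}\hat u\|^2_{L^2(\mathbb{R}^2)}$, but you then propose to bound this below by a multiple of $|u|^2_{H^{\lambda}(\Omega)}$, $\lambda=\max\{\alpha,\beta\}$, and finish with the right-hand inequality of Lemma~\ref{lemma:eqnorm}. That intermediate lower bound is not available: if, say, $\alpha<\beta=\lambda$, then $|u|_{H^{\lambda}(\Omega)}$ demands order-$\lambda$ control in the $x$-direction, whereas $a(u,u)$ only controls the order-$\alpha$ derivative ${}_{a(y)}D_{x}^{\alpha}u$; Lemma~\ref{lemma:equivalent} runs in the opposite direction (it bounds the lower-order derivative by the higher-order one, i.e.\ bounds what you already have by what you lack), and Lemma~\ref{lemma:relation} only compares seminorms of the same order $\mu$. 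A function highly oscillatory in $x$ alone makes $|u|_{H^{\lambda}}$ arbitrarily large compared with $\|{}_{a(y)}D_{x}^{\alpha}u\|+\|{}_{c(x)}D_{y}^{\beta}u\|$, so the inequality you need fails in general when $\alpha\ne\beta$. The repair is to bypass $H^{\lambda}$ altogether, as the paper does: restrict the $\mathbb{R}^2$-norms to $\Omega$ (on $\Omega$ the derivative of the zero extension coincides with ${}_{a(y)}D_{x}^{\alpha}u$; over all of $\mathbb{R}^2$ you only get ``$\ge$'', not the equality you asserted, but ``$\ge$'' is all that is needed), giving $a(u,u)\ge K_x\|{}_{a(y)}D_{x}^{\alpha}u\|^2+K_y\|{}_{c(x)}D_{y}^{\beta}u\|^2=|u|^2_{(\alpha,\beta)}$, and then apply the left-hand inequality of Lemma~\ref{lemma:eqnorm} (i.e.\ the fractional Poincar\'e inequality of Lemma~\ref{lemma:equivalent}), $C_1\|u\|_{(\alpha,\beta)}\le|u|_{(\alpha,\beta)}$, which yields $\mathcal{B}=C_1^2$ directly.
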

\begin{proof}
According to the definition of $a(u, v)$ and Cauchy-Schwarz inequality, we have
\begin{equation}
  \begin{aligned}
    a(u,v) 
  &\le K_x|c_\alpha| \|{}_{a(y)}D_{x}^{\alpha}u\|\|{}_{x}D_{b(y)}^{\alpha}v\| 
  + K_y|c_{\beta}| \|{}_{c(x)}D_{y}^{\beta}u\|\|{}_{y}D_{d(x)}^{\beta}v\| \\
  &\quad+ K_x|c_\alpha| \|{}_{x}D_{b(y)}^{\alpha}u\| \|{}_{a(y)}D_{x}^{\alpha}v\| 
    + K_y|c_{\beta}| \|{}_{y}D_{d(x)}^{\beta}u\|\| {}_{c(x)}D_{y}^{\beta}v\| \\
  \end{aligned}
\end{equation}
Then by inequality
\begin{equation*}
  a_1 b_1 + a_2b_2 \le \sqrt{a_1^2 + a_2^2}\sqrt{b_1^2+b_2^2}, \quad a_1, b_1, a_2, b_2 > 0,
\end{equation*}
we can obtain
\begin{equation}
  \begin{aligned}
    a(u,v) 
      & \le C|u|_{(\alpha,\beta)}
      \sqrt{K_x\|{}_{x}D_{b(y)}^{\alpha}v\|^2 + K_y\|{}_{y}D_{d(x)}^{\beta}v\|^2 }\\
   &\quad  + C|v|_{(\alpha,\beta)}
   \sqrt{K_x\|{}_{x}D_{b(y)}^{\alpha}u\|^2 + K_y\|{}_{y}D_{d(x)}^{\beta}u\|^2 } \\
  \end{aligned}
\end{equation}
where $C = \max\{|c_\alpha|,|c_\beta|\}$.
By Lemma~\ref{lemma:add}, we know
\begin{equation}
  \begin{aligned}
  \|{}_{x}D_{b(y)}^{\alpha}v\|^2 \le C_1\|{}_{a(y)}D_{x}^{\alpha}v\|^2  \\
  \|{}_{y}D_{d(x)}^{\beta}v\|^2 \le C_2\|{}_{c(x)}D_{y}^{\beta}v\|^2  \\
  \end{aligned}
\end{equation}
where $C_1, C_2$ are  non-negative constants. Then, by Lemma~\ref{lemma:eqnorm}, we have 
\begin{equation}
  a(u,v) \le \mathcal{A}|u|_{(\alpha,\beta)}|v|_{(\alpha,\beta)} 
  \le \mathcal{A}\|u\|_{(\alpha,\beta)}\|v\|_{(\alpha,\beta)},
\end{equation}
where $\mathcal{A} = 2\max\{|c_\alpha|, |c_\beta|\}\sqrt{\max\{C_1,C_2\}}$.

Let's move on to the second inequality. From Lemma~\ref{lemma:23} and Lemma~\ref{lemma:eqnorm},
we have 
\begin{equation}
  \begin{aligned}
  a(u,u) &= 
    2K_x c_{\alpha} ({}_{a(y)}D_{x}^{\alpha}u, {}_{x}D_{b(y)}^{\alpha}u) 
    + 2K_y c_{\beta}({}_{c(x)}D_{y}^{\beta}u, {}_{y}D_{d(x)}^{\beta}u)  \\
    &= K_x \|{}_{-\infty}D_{x}^{\mu}\hat u\|^2_{L^2(\mathbb{R}^2)} +
    K_y\|{}_{-\infty}D_{y}^{\mu}\hat u\|^2_{L^2(\mathbb{R}^2)}    \\
    &\ge {K_x \|{}_{a(y)}D_{x}^{\alpha}u\|^2 + K_y \|{}_{c(x)}D_{y}^{\beta}u\|^2}, \\
    &= |u|_{(\alpha,\beta)}^2 \ge \mathcal{B} \|u\|_{(\alpha,\beta)}^2.
  \end{aligned}
\end{equation}
Now we have proved the properties \eqref{eq:property} of $a(u,v)$.
\end{proof}}
Assume that $F(u) \in C^1(\Theta)$  
with $M_1 = \max\limits_{u\in \Theta}|F(u)|$, and
$M_2 = \max\limits_{u\in \Theta}|F'(u)|$. 
In Eq.~\eqref{eq:scheme1}, let $v_h = u_h^n$, then
\begin{equation}
  \begin{aligned}
    (u_h^n, u_h^n) &+ \tau a(u_h^n, u_h^n) - \tau(F'(u_h^{n-1})u_h^{n}, u_h^n) \\
    &= (u_h^{n-1}, u_h^n) + \tau(F(u_h^{n-1}) - F'(u_h^{n-1})u_h^{n-1}, u_h^n) + \tau(f^{n},u_h^n). 
  \end{aligned}
\end{equation}
Using the property \mTwo{$ a(u_h^n, u_h^n) \ge \mathcal{B} \|u_h^n\|^2_{(\alpha,\beta)} \ge 
\mathcal{B} \|u_h^n\|^2$} and Cauchy-Schwarz inequality yields
\begin{equation}
  \begin{aligned}
    \|u_h^n\|^2 &+ \tau (\mathcal{B} - M_2)\|u_h^n\|^2 \le \|u_h^n\|\|u_h^{n-1}\| \\
    &+ \tau M_1 S \|u_h^n\| + \tau M_2 \|u_h^n\|\|u_h^{n-1}\| + \tau \|f^n\| \|u_h^n\|,
  \end{aligned}
\end{equation}
where $S$ is the positive square root of the area of domain $\Omega$.
Then
\begin{equation} \label{eq:srel}
  \begin{aligned} 
    \|u_h^n\| + \tau (\mathcal{B}- M_2)\|u_h^n\| \le \|u_h^{n-1}\| 
    + \tau M_2 \|u_h^{n-1}\| + \tau \|f^n\| + \tau M_1 S.
  \end{aligned}
\end{equation}
Summing $n$ from $1$ to $k$ in~\eqref{eq:srel} 
\begin{equation}
  \begin{aligned}
    \|u_h^k\| &+ \tau (\mathcal{B} - M_2)\|u_h^k\| \le \|u_h^{0}\| + \tau M_2 \|u_h^{0}\| \\
    &+ \tau (2M_2-\mathcal{B}) \sum\limits_{n=1}^{k-1}\|u_h^{n}\|+ 
    \tau \sum\limits_{n=1}^{k}\|f^n\| + k\tau  M_1 S.
  \end{aligned}
\end{equation}
In case $\mathcal{B} \ge M_2$, we see that
\begin{equation} \label{eq:sdeduce}
  \begin{aligned} 
    \|u_h^k\| & \le C\|u_h^{0}\|+ \tau C \sum\limits_{n=1}^{k-1}\|u_h^{n}\|+ 
    \tau \sum\limits_{n=1}^{k}\|f^n\| + k\tau M_1 S.
  \end{aligned}
\end{equation}
where $C$ is a non-negative constant independent with $u_h$.  In case $\mathcal{B} < 
M_2$, let $\tau < \frac{1}{2(M_2-\mathcal{B})}$, then inequality~\eqref{eq:sdeduce} also holds. 

To conduct analysis on stability, we make use of the following Gr\"onwall inequality.
\begin{lemma}[\cite{Quarteroni2008}]\label{lemma:G}
  Assume that $k_n$ is a nonnegative sequence, $g_0 > 0$, and the non-negative
  sequence $\{ \varphi_n \}$ satisfies $\varphi_0 \le g_0$ and  
  \begin{equation}
    \varphi_n \le g_0 + \sum\limits_{j=0}^{n-1}k_j \varphi_j, \quad n \ge 1.
  \end{equation}
  Then
  \begin{equation}
    \varphi_n \le g_0\exp\Bigg(\sum\limits_{j=0}^{n-1}k_j\Bigg), \quad n \ge 1.
  \end{equation}
\end{lemma}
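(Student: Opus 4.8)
The plan is to prove the estimate by a short induction, after replacing the right-hand side by the auxiliary sequence
\[
\psi_n := g_0 + \sum_{j=0}^{n-1} k_j \varphi_j, \qquad n \ge 0,
\]
with the usual convention that the empty sum vanishes, so that $\psi_0 = g_0$. By the hypothesis, $\varphi_n \le \psi_n$ for every $n \ge 1$, and moreover $\varphi_0 \le g_0 = \psi_0$; hence it suffices to bound $\psi_n$, since $\varphi_n \le \psi_n$ will transfer the bound back to $\varphi_n$.

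First I would record the elementary inequality $1 + x \le e^{x}$ (valid for all real $x$, in particular for $x = k_j \ge 0$). Next I would write down the one-step recursion satisfied by $\psi$: for each $n \ge 0$,
\[
\psi_{n+1} - \psi_n = k_n \varphi_n \le k_n \psi_n,
\]
where the inequality uses $k_n \ge 0$ together with $\varphi_n \le \psi_n$ (which holds for $n \ge 1$ by assumption and for $n = 0$ because $\varphi_0 \le g_0 = \psi_0$). Consequently
\[
\psi_{n+1} \le (1 + k_n)\psi_n \le e^{k_n} \psi_n,
\]
and since each $\psi_n \ge g_0 > 0$ this multiplicative step is well behaved.

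Then I would iterate the last inequality starting from $\psi_0 = g_0$; a trivial induction on $n$ gives
\[
\psi_n \le g_0 \prod_{j=0}^{n-1} e^{k_j} = g_0 \exp\!\Big(\sum_{j=0}^{n-1} k_j\Big), \qquad n \ge 1.
\]
Combining this with $\varphi_n \le \psi_n$ yields the claimed bound. The same inequality will later be applied to \eqref{eq:sdeduce} with $\varphi_n = \|u_h^n\|$ to obtain the stability estimate.

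I do not anticipate any real obstacle: the argument is the classical discrete Gr\"onwall lemma and is insensitive to the dependence of $k_n$ on $n$. The only points demanding a little care are treating the index $n = 0$ correctly — this is exactly what the hypothesis $\varphi_0 \le g_0$ is for, since it allows the recursion for $\psi$ to be started — and keeping the empty-sum and empty-product conventions straight at $n = 0$.
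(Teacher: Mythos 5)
Your proof is correct: the auxiliary majorant $\psi_n = g_0 + \sum_{j=0}^{n-1}k_j\varphi_j$, the one-step bound $\psi_{n+1}\le(1+k_n)\psi_n\le e^{k_n}\psi_n$ via $1+x\le e^x$, and the iteration from $\psi_0=g_0$ constitute the standard, complete argument for the discrete Gr\"onwall inequality, with the $n=0$ case and the empty-sum convention handled properly. The paper itself gives no proof of this lemma --- it is quoted directly from the cited reference \cite{Quarteroni2008} --- so there is nothing internal to compare against; your argument is exactly the classical one that the citation points to.
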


By Lemma~\ref{lemma:G} and inequality \eqref{eq:sdeduce}, we have 
\begin{equation}
  \|u_h^{k}\| \le q_k \exp\left( \tau C(k-1) \right) \le q_{n_T} \exp (CT),
\end{equation}
where 
\begin{equation}\label{eq:gk1}
q_k = C \|u_h^{0}\|+\tau \sum\limits_{n=1}^{k}\|f^n\| + k\tau M_1 S.
\end{equation}
So the analysis of stability is completed. 
In conclusion, we have proved the following theorem.
\begin{theorem}[stability]
  Suppose that $u_h^n$ are solutions of \eqref{eq:scheme1}, and $F(x)$ satisfies
  {$F(x) \in C^1(\Theta)$, $|F(x)| \le M_1$, and $|F'(x)| \le M_2$}.
  Assume $\tau < \frac{1}{2(M_2 - \mathcal{B})}$ if $\mathcal{B} < M_2$. 
  Then
  \begin{equation}
    \|u_h^{k}\| \le q_k \exp\left( \tau C(k-1) \right) \le q_{n_T} \exp (CT)
  \end{equation}
  where $q_k$ is defined by \eqref{eq:gk1}. 
\end{theorem}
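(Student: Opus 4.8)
The plan is to run the standard energy argument for a backward Euler time stepping: test the fully discrete equation against the current iterate, use the coercivity of $a(\cdot,\cdot)$ to control the principal part, bound the linearized source contributions by Cauchy--Schwarz together with the pointwise bounds on $F$ and $F'$, derive a one-step recursion, sum it, and close the estimate with the discrete Gr\"onwall inequality of Lemma~\ref{lemma:G}.

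First I would set $v_h = u_h^n$ in \eqref{eq:scheme1}. Using the coercivity estimate $a(u_h^n,u_h^n) \ge \mathcal{B}\|u_h^n\|_{(\alpha,\beta)}^2 \ge \mathcal{B}\|u_h^n\|^2$ established above, the hypotheses $|F(u_h^{n-1})| \le M_1$ and $|F'(u_h^{n-1})| \le M_2$, Cauchy--Schwarz applied to each inner product, and $\int_\Omega |u_h^n|\,dx \le S\|u_h^n\|$ (with $S$ the square root of the area of $\Omega$) for the $(F(u_h^{n-1}),u_h^n)$ term, one obtains
\[
\|u_h^n\|^2 + \tau(\mathcal{B}-M_2)\|u_h^n\|^2 \le \|u_h^n\|\|u_h^{n-1}\| + \tau M_1 S\|u_h^n\| + \tau M_2\|u_h^n\|\|u_h^{n-1}\| + \tau\|f^n\|\|u_h^n\|.
\]
Cancelling one factor of $\|u_h^n\|$ (the inequality being trivial when $u_h^n=0$) yields the one-step inequality \eqref{eq:srel}.

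Next I would sum \eqref{eq:srel} over $n = 1,\dots,k$ and rearrange, using $\sum_{n=1}^k\|u_h^{n-1}\| = \|u_h^0\| + \sum_{n=1}^{k-1}\|u_h^n\|$, to reach an estimate of the form
\[
\bigl(1+\tau(\mathcal{B}-M_2)\bigr)\|u_h^k\| \le (1+\tau M_2)\|u_h^0\| + \tau(2M_2-\mathcal{B})\sum_{n=1}^{k-1}\|u_h^n\| + \tau\sum_{n=1}^k\|f^n\| + k\tau M_1 S.
\]
The delicate point, and the reason for the hypothesis on $\tau$, is the sign of the factor $1+\tau(\mathcal{B}-M_2)$ on the left: if $\mathcal{B}\ge M_2$ it is $\ge 1$ and may simply be dropped, whereas if $\mathcal{B} < M_2$ the assumption $\tau < \frac{1}{2(M_2-\mathcal{B})}$ forces $1+\tau(\mathcal{B}-M_2) > \tfrac12 > 0$, so dividing by it is legitimate and merely rescales the constants. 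In either case one arrives at \eqref{eq:sdeduce}, i.e.\ $\|u_h^k\| \le C\|u_h^0\| + \tau C\sum_{n=1}^{k-1}\|u_h^n\| + \tau\sum_{n=1}^k\|f^n\| + k\tau M_1 S$ with $C \ge 1$ independent of $u_h$ and $\tau$.

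Finally, since \eqref{eq:sdeduce} holds for every index up to $k$ and the quantity $q_k$ from \eqref{eq:gk1} is nondecreasing in $k$ and dominates all the non-$u_h$ terms on the right, I would apply Lemma~\ref{lemma:G} with $\varphi_n = \|u_h^n\|$, $g_0 = q_k$, and $k_j = \tau C$ for $j\ge 1$ (with $k_0 = 0$), noting $\varphi_0 = \|u_h^0\| \le C\|u_h^0\| \le q_k$. This gives $\|u_h^k\| \le q_k\exp\!\big(\sum_{j=0}^{k-1}k_j\big) = q_k\exp(\tau C(k-1))$. Bounding $q_k \le q_{n_T}$ and $\tau C(k-1) \le \tau C n_T = CT$ then yields the second inequality and completes the proof. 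Everything is routine Cauchy--Schwarz bookkeeping plus direct appeals to the coercivity theorem and Gr\"onwall; the only genuine obstacle is the case split on $\mathcal{B}$ versus $M_2$ and the accompanying step-size restriction.
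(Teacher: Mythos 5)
Your proposal is correct and follows essentially the same route as the paper: testing \eqref{eq:scheme1} with $v_h = u_h^n$, invoking coercivity and Cauchy--Schwarz with the bounds $M_1$, $M_2$ and the area factor $S$ to obtain \eqref{eq:srel}, summing to reach \eqref{eq:sdeduce} with the same case split on $\mathcal{B}$ versus $M_2$ under the step-size restriction, and closing with the Gr\"onwall inequality of Lemma~\ref{lemma:G}. Your explicit handling of the factor $1+\tau(\mathcal{B}-M_2)$ and of the Gr\"onwall data ($g_0 = q_k$, $k_j = \tau C$) merely spells out details the paper leaves implicit.
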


Next, consider the convergence of BEGM. 
First, define projection operator $P_h:V \rightarrow V_h$ with
following property:
\begin{equation} \label{eq:projection}
  a(u - P_h u, v_h) = 0, u \in V, \forall v_h \in V_h.
\end{equation}
In order to exploit the property of the projection operator $P_h$, let us
suppose that there is an interpolation 
$I_h:H^{s+1}(\Omega)\rightarrow V_h$ 
satisfied that
\begin{equation}
  \|u-I_h u\|_{H^{\gamma}(\Omega)} \le Ch^{\mu-\gamma} \|u\|_{H^{\mu}(\Omega)},
  \quad \forall u \in H^{\mu}(\Omega), 
  \quad 0 \le \gamma < \mu \le s+ 1.
\end{equation} 
Then we can deduce an approximation property of $P_h$.
\begin{lemma}\label{lemma:ph}
  If $u \in H^{\mu}(\Omega) \cap V$, $\lambda< \mu \le s + 1$, then the
  following estimate holds:
  \begin{equation}
    |u-P_h u|_{(\alpha,\beta)} \le C h^{\mu - \lambda} \|u\|_{H^{\mu}(\Omega)},
  \end{equation}
  where $C$ is independent of $h$ and $u$.
\end{lemma}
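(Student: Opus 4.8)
The plan is to run a C\'ea-type (quasi-optimality) argument in the energy (semi-)norm $|\cdot|_{(\alpha,\beta)}$, and then convert the resulting best-approximation bound into a power of $h$ by inserting the interpolant $I_h u$ and invoking the norm equivalences of Lemma~\ref{lemma:eqnorm} together with the assumed interpolation estimate. All the analytic substance — continuity and coercivity of $a(\cdot,\cdot)$ in \eqref{eq:property}, and the comparison $\|u\|_{(\alpha,\beta)} \le C_2 |u|_{H^\lambda(\Omega)}$ — has already been set up, so the proof is mostly assembly.

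First I would use the defining orthogonality \eqref{eq:projection} of $P_h$ together with coercivity. For any $v_h \in V_h$ we have $v_h - P_h u \in V_h$, so \eqref{eq:projection} gives $a(u - P_h u,\, v_h - P_h u) = 0$, whence
\[
\mathcal{B}\,\|u - P_h u\|_{(\alpha,\beta)}^2 \le a(u - P_h u,\, u - P_h u) = a(u - P_h u,\, u - v_h).
\]
Applying the continuity bound in \eqref{eq:property} to the right-hand side and dividing by $\|u - P_h u\|_{(\alpha,\beta)}$ (the estimate being trivial if $u = P_h u$) yields the quasi-optimality bound
\[
\|u - P_h u\|_{(\alpha,\beta)} \le \frac{\mathcal{A}}{\mathcal{B}}\,\|u - v_h\|_{(\alpha,\beta)}, \qquad \forall\, v_h \in V_h.
\]

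Next I would choose $v_h = I_h u$. Since $u \in V$ by hypothesis and $I_h$ is conforming, $I_h u \in V_h \subset V$, so $u - I_h u \in V$ and Lemma~\ref{lemma:eqnorm} applies to it, giving $\|u - I_h u\|_{(\alpha,\beta)} \le C_2 |u - I_h u|_{H^\lambda(\Omega)} \le C_2 \|u - I_h u\|_{H^\lambda(\Omega)}$. Because $0 \le \lambda < \mu \le s+1$, the assumed interpolation estimate with $\gamma = \lambda$ bounds this by $C h^{\mu-\lambda}\|u\|_{H^\mu(\Omega)}$. Chaining these with the quasi-optimality bound and the trivial inequality $|u - P_h u|_{(\alpha,\beta)} \le \|u - P_h u\|_{(\alpha,\beta)}$ gives the claim. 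The only points requiring care — and the closest thing to an obstacle here — are purely bookkeeping: one must check that $I_h u$ honors the homogeneous Dirichlet-type constraints built into $V_h$ so that $u - I_h u \in V$ and Lemma~\ref{lemma:eqnorm} is legitimately applicable, and that the interpolation estimate is invoked only in its admissible range $\lambda < \mu \le s+1$. There is no genuine analytic difficulty beyond what is already in \eqref{eq:property} and Lemma~\ref{lemma:eqnorm}.
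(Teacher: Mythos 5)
Your proposal is correct and is essentially the argument the paper defers to (it cites Lemma~4.4 of \cite{Zeng2014} and omits the details): Galerkin orthogonality of $P_h$ plus the coercivity and continuity in \eqref{eq:property} give quasi-optimality in $\|\cdot\|_{(\alpha,\beta)}$, and taking $v_h = I_h u$ with Lemma~\ref{lemma:eqnorm} and the interpolation estimate at $\gamma=\lambda$ yields the $h^{\mu-\lambda}$ rate. Your bookkeeping remarks (conformity of $I_h u$ with the boundary constraints, admissible range $\lambda<\mu\le s+1$) are the right points to check and pose no difficulty.
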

The proof is similar with Lemma~4.4 in \cite{Zeng2014}, so we omit here. 

Let $\theta^n = u_h^{n} - P_h u^n$, $\rho^n = P_h u^n - u^n$, and
$e^n = u_h^n - u^n= \theta^n + \rho^n$. The exact solution $u^n$ satisfies
\begin{equation} \label{eq:a}
  (\partial_t u^n, v_h) + a(u^n, v_h) = \big(F(u^n), v_h\big) + (f^n, v_h), 
  \forall v_h \in V_h, t \in (0, T].
\end{equation}
And $u_h^n$ is the numerical solution of 
\begin{equation} \label{eq:b}
  (\bar \partial_t u_h^n, v_h) + a(u_h^n, v_h) =\big(F(u_h^{n-1}),v_h\big) 
  + \big(F'(u^{n-1}_h)(u_h^n-u_h^{n-1}),v_h\big) + (f^n, v_h), 
  \forall v_h \in V_h.
\end{equation}
Subtracting~\eqref{eq:a} from~\eqref{eq:b}, we obtain
\begin{equation} \label{eq:error}
  \begin{aligned}
    (\bar \partial_t e^n, v_h) &+ a(e^n, v_h) = (u_t^n - \bar\partial_t u^n, v_h) \\
    &+ \big(F(u_h^{n-1}) - F(u^n),v_h\big) + \big(F'(u^{n-1}_h)(u_h^n-u_h^{n-1}),v_h\big). 
  \end{aligned}
\end{equation}
Firstly, we estimate $\|\theta^n\|_{(\alpha,\beta)}$. 
Since $a(\rho^n, v_h) = 0$, the Eq.~\eqref{eq:error} can be written as
\begin{equation} \label{eq:error2}
  \begin{aligned}
    (\bar \partial_t \theta^n, v_h) &+ a(\theta^n, v_h) = (u_t^n - \bar\partial_t u^n, v_h)
    -(\bar\partial_t\rho^n, v_h) \\
    &+\big(F(u_h^{n-1}) - F(u^n),v_h\big) + \big(F'(u^{n-1}_h)(u_h^n-u_h^{n-1}),v_h\big). 
  \end{aligned}
\end{equation}

Let $v_h = \bar \partial_t \theta^n$ in~\eqref{eq:error2}, then
\begin{equation} \label{eq:error3}
  \begin{aligned}
    (\bar \partial_t \theta^n, \bar\partial_t\theta^n) &+ a(\theta^n, \bar\partial_t\theta^n) 
    = (u_t^n - \bar\partial_t u^n, \bar\partial_t\theta^n) 
    -(\bar\partial_t\rho^n, \bar\partial_t\theta^n) \\
    &+\big(F(u_h^{n-1}) - F(u^n),\bar\partial_t\theta^n\big)
    + \big(F'(u^{n-1}_h)(u_h^n-u_h^{n-1}),\bar\partial_t\theta^n\big). 
  \end{aligned}
\end{equation}
Separate $\theta^n$ from the right hand of~\eqref{eq:error3}
by Cauchy-Schwarz inequality
\begin{equation} \label{eq:begin}
  (u_t^n - \bar\partial_t u^n, \bar\partial_t\theta^n) \le
  \frac{1}{2\varepsilon} \|u_t^n - \bar\partial_t u^n\|^2 
  + \frac{\varepsilon}{2} \|\bar\partial_t\theta^n\|^2,
\end{equation}
\begin{equation}
  (\bar\partial_t\rho^n, \bar\partial_t\theta^n) \le
  \frac{1}{2\varepsilon} \|\bar\partial_t\rho^n\|^2 
  + \frac{\varepsilon}{2} \|\bar\partial_t\theta^n\|^2,
\end{equation}
\begin{equation}
  \begin{aligned}
    (F(u_h^{n-1}) - F(u^n),\bar\partial_t\theta^n) 
    &\le \|F(u_h^{n-1}) - F(u^n)\| \|\bar\partial_t\theta^n\| \\
    &\le \|M_2 (u_h^{n-1} - u^n)\| \|\bar\partial_t\theta^n\| \\
    &\le \frac{M_2^2}{2\varepsilon}\|u_h^{n-1}- u^n\|^2
    + \frac{\varepsilon}{2}\|\bar\partial_t\theta^n\|^2, 
  \end{aligned}
\end{equation}
\begin{equation}\label{eq:end}
  (F'(u^{n-1}_h)(u_h^n-u_h^{n-1}),\bar\partial_t\theta^n)
  \le \frac{M_2^2}{2\varepsilon}\|u_h^{n-1}- u_h^n\|^2
  + \frac{\varepsilon}{2}\|\bar\partial_t\theta^n\|^2.
\end{equation}
Let $\varepsilon = 1/2$ in~\eqref{eq:begin}-\eqref{eq:end}, then we see
\begin{equation} \label{eq:estimate1}
  \begin{aligned}
    a(\theta^n, \theta^n) - a(\theta^{n-1}, \theta^{n-1})
    &\le 2\tau \|u_t^n - \bar\partial_t u^n\|^2 + 2\tau\|\bar\partial_t\rho^n\|^2 \\
    &+ 2\tau M_2^2\|u_h^{n-1}- u^n\|^2  + 2\tau M_2^2\|u_h^{n-1}- u_h^n\|^2,
  \end{aligned}
\end{equation}
where we have used the following equality,
\begin{equation} \label{eq:equal}
  a(\theta^n, \bar\partial_t\theta^n) = \frac{1}{2\tau}
  \left(
  a(\theta^n, \theta^n) + a(\theta^n - \theta^{n-1}, \theta^n - \theta^{n-1})
  - a(\theta^{n-1}, \theta^{n-1})\right).
\end{equation}
Also note that the last two terms in~\eqref{eq:estimate1} can be estimated as
\begin{equation} \label{eq:430}
  \left\{
  \begin{aligned}
  &\|u_h^{n-1}- u^n\|^2 \le 3(\|\rho^{n-1}\|^2 + \|\theta^{n-1}\|^2 
  + \|u^{n-1} - u^n\|^2),  \\
  &\|u_h^{n-1}- u_h^n\|^2 \le 5(\|\rho^{n-1}\|^2 + \|\theta^{n-1}\|^2 
  + \|\rho^{n}\|^2 + \|\theta^{n}\|^2 + \|u^{n-1} - u^n\|^2),
  \end{aligned}
  \right.
\end{equation}
here we have used the Minkowski inequality and  
\mTwo{\begin{equation}
  (a_1 + a_2 + \cdots + a_m)^2 \le m(a_1^2+a_2^2+\cdots+a_m^2),
\end{equation}}
where $a_i (i=1,\cdots, m)$ is non-negative real number.
\mMy{The inequalities~\eqref{eq:estimate1}},~\eqref{eq:430} and the fact 
$a(\theta^k, \theta^k) \ge \mathcal{B} \|\theta^{k}\|_{(\alpha,\beta)}^2$
yield
\begin{equation}
  \begin{aligned}
    \mathcal{B}\|\theta^k\|_{(\alpha,\beta)}^2 &\le a(\theta^{0}, \theta^{0})
    + 2\tau \sum\limits_{n=1}^{k}(\|u_t^n - \bar\partial_t u^n\|^2 + \|\bar\partial\rho^n\|^2) \\
    & + 2\tau M_2^2 \Big( 5\|\theta^{k}\|^2 + 13\sum\limits_{n=0}^{k} \|\rho^{n}\|^2 
    + 13\sum\limits_{n=0}^{k-1}\|\theta^{n}\|^2
    + 13\sum\limits_{n=1}^{k} \|u^{n-1} - u^n\|^2  \Big).
  \end{aligned}
\end{equation}
When $\tau \le \frac{\mathcal{B}}{20M_2^2}$, we have
\begin{equation} \label{eq:deduceC}
    \|\theta^{k}\|_{(\alpha,\beta)}^2 \le  C g_k + 
    \tau C'M_2^2\sum\limits_{n=0}^{k-1}\|\theta^{n}\|_{(\alpha,\beta)}^2
\end{equation}
where  \mTwo{$C = \frac{1}{\mathcal{B}-10\tau M_2^2}$,
$C'= \frac{26}{\mathcal{B}-10\tau M_2^2}$}, and $g_k$ is defined as 
\begin{equation}
  \begin{aligned}
    g_k =& a(\theta^0, \theta^0)
    + 2\tau \sum\limits_{n=1}^{k}(\|u_t^n - \bar\partial_t u^n\|^2 + \|\bar\partial\rho^n\|^2) \\
    &+ 26\tau M_2^2 \Big(\sum\limits_{n=0}^{k} \|\rho^{n}\|^2 
    + \sum\limits_{n=1}^{k} \|u^{n-1} - u^n\|^2 \Big).
  \end{aligned}
\end{equation}
By Lemma~\ref{lemma:G} and inequality~\eqref{eq:deduceC}, the following estimation holds
\begin{equation}
  \|\theta^{k}\|_{(\alpha,\beta)}^2 \le  C g_k\exp{(C'TM_2^2)}.
\end{equation}
Now, we need to estimate $g_k$. In order to make it clear, 
we denote $\|\cdot\|_{\mu} = \|\cdot\|_{H^{\mu}(\Omega)}$
and $\|\cdot\|_{0,\mu} = ( \int_0^T \|\cdot\|^2_{\mu}dt )^{1/2}.$
The continuity of $a(u,v)$ and Lemma~\ref{lemma:ph}
imply that 
\begin{equation} \label{eq:begin2}
  \begin{aligned}
    a(\theta^0, \theta^0) &\le C \|\theta^0\|_{(\alpha,\beta)}^2
    \le C \left(\|u^0_h - u(t_0)\|^2_{\lambda}+\|u(t_0)-\mTwo{P_h u(t_0)}\|^2_{(\alpha,\beta)}
    \right) \\
    &\le C \left( \|u^0_h - u(t_0)\|^2_{\lambda} + h^{2\mu-2\lambda}\|u(t_0)\|^2_{\mu} \right).
  \end{aligned}
\end{equation}
For the term with $\|u_t^n - \bar\partial_t u^n\|$, we use Taylor formula and
Cauchy-Schwarz inequality:
\begin{equation}
  \begin{aligned}
    u_t^n - \bar\partial_t u^n 
    &= \frac{1}{\tau}\int_{t_{n-1}}^{t_n}(t-t_{n-1})u_{tt}dt \\
    &\le \frac{1}{\tau}{\bigg(\int_{t_{n-1}}^{t_n}{(t-t_{n-1})}^2dt 
    \int_{t_{n-1}}^{t_n}u_{tt}^2dt\bigg)}^{\frac{1}{2}} \\
    &= {\left(\frac{\tau}{3}\right)}^{\frac{1}{2}} 
    {\bigg(\int_{t_{n-1}}^{t_n}u_{tt}^2dt\bigg)}^{\frac{1}{2}}.
  \end{aligned}
\end{equation}
Then we have
\begin{equation}
  2\tau \sum\limits_{n=1}^{k}\|u_t^n - \bar\partial_t u^n\|^2
  \le 2\tau\sum\limits_{n=1}^{k}
  \int_{\Omega}\bigg(\frac{\tau}{3}\int_{t_{n-1}}^{t_n}u_{tt}^2dt\bigg)dxdy
  = \frac{2\tau^2}{3}\|u_{tt}\|^2_{0,0}.
\end{equation}
Apply Cauchy-Schwarz inequality to the third term of $g_k$, then we obtain
\begin{equation}
  \begin{aligned}
    2\tau \sum\limits_{n=1}^{k}\|\bar\partial_t\rho^n\|^2
    &= 2\tau \sum\limits_{n=1}^{k}\|\frac{1}{\tau}\int_{t_{n-1}}^{t_n}\rho_t dt\|^2 \\
    &\le 2\tau \sum\limits_{n=1}^{k}\int_{\Omega} \bigg(\frac{1}{\tau^2}
    \int_{t_{n-1}}^{t_n}1^2dt \int_{t_{n-1}}^{t_n}\rho_t^2dt\bigg)dxdy \\
    &\le Ch^{2\mu-2\lambda}\|u_t\|_{0,\mu}^2.
  \end{aligned}
\end{equation}
By the definition of $\rho^n$ and the property of projection $P_h$, we have
\begin{equation}
  \begin{aligned}
    6\tau M_2^2 \sum\limits_{n=0}^{k} \|\rho^{n}\|^2 
    &\le 6\tau M_2^2 \sum\limits_{n=0}^{k} \|\rho^{n}\|_{(\alpha,\beta)}^2 \\
    &\le 6\tau M_2^2\sum\limits_{n=0}^{k} Ch^{2\mu-2\lambda}\|u^n\|_{\mu}^2  \\
    &\le CT M_2^2 h^{2\mu-2\lambda}\max\limits_{t_0\le t \le T} \|u(t)\|_{\mu}^2.
  \end{aligned}
\end{equation}
For the last term of $g_k$, we use Cauchy-Schwarz inequality again:
\begin{equation} \label{eq:end2}
  \begin{aligned}
    2\tau M_2^2 \sum\limits_{n=1}^{k} \|u^{n-1} - u^n\|^2 
    &= 2\tau M_2^2 \sum\limits_{n=1}^{k}\int_{\Omega}  
    {\bigg(\int_{t_{n-1}}^{t_n}u_t dt \bigg)}^2 dxdy \\
    &\le 2\tau^2 M_2^2 \int_{\Omega}\left(\int_{t_{0}}^{t_{n_T}}u_t^2 dt\right) dxdy \\
    &= 2\tau^2 M_2^2 \|u_t\|^2_{0,0}. 
  \end{aligned}
\end{equation}
Hence, $g_k$ can be estimated as
\begin{equation} \label{eq:gk}
  \begin{aligned}
    g_k \le& C \|u^0_h - u(t_0)\|^2_{\lambda}  + 
    C \tau^2 \big( \|u_{tt}\|^2_{0,0} + M_2^2 \|u_t\|^2_{0,0} \big) \\
    &+ C h^{2\mu-2\lambda} \big( \|u(t_0)\|^2_{\mu} + \|u_t\|_{0,\mu}^2 
    + T M_2^2 \max_{0 \le t \le T} \|u(t)\|_{\mu}^2  \big).
  \end{aligned}
\end{equation}
Since $u_h^k - u^k = \theta^k + \rho^k$, we have the result
\begin{equation}
  \begin{aligned}
    \|u_h^k - u^k\|_{(\alpha,\beta)}^2 &\le C(\|\theta^k\|_{(\alpha,\beta)}^2 
    + \|\rho^k\|_{(\alpha,\beta)}^2) \\
    &\le C g_k \exp(C'TM_2^2) + C h^{2\mu-2\lambda}\|u(t_k)\|^2_{\mu} \\
    &\le C\exp(C'TM_2^2) \Big( \|u^0_h - u(t_0)\|^2_{\lambda}  + 
    \tau^2 \left( \|u_{tt}\|^2_{0,0} + M_2^2 \|u_t\|^2_{0,0} \right)  \\
    & + h^{2\mu-2\lambda} \big( \|u(t_0)\|^2_{\mu} + \|u(t_k)\|^2_{\mu}+ \|u_t\|_{0,\mu}^2 
    + T M_2^2 \max_{0 \le t \le T} \|u(t)\|_{\mu}^2  \big) \Big)
  \end{aligned}
\end{equation}
When choosing the interpolation as initial value of $u$ at time $t_0$, 
i.e.\ $u_h^0 = I_h u(t_0)$, the following inequality holds
\begin{equation}
  \begin{aligned}
    \|u_h^k - u^k\|_{(\alpha,\beta)}^2 
    &\le C\exp(C'TM_2^2) \Big(  
    \tau^2 \left( \|u_{tt}\|^2_{0,0} + M_2^2 \|u_t\|^2_{0,0} \right)  \\
    & + h^{2\mu-2\lambda} \big( \|u(t_0)\|^2_{\mu} + \|u(t_k)\|^2_{\mu}+ \|u_t\|_{0,\mu}^2 
    + T M_2^2 \max_{0 \le t \le T} \|u(t)\|_{\mu}^2  \big) \Big).
  \end{aligned}
\end{equation}
We finish the analysis of convergence of BEGM. 
The convergence theorem is shown below as a conclusion.
\begin{theorem}[convergence]\label{the:convergence}
  Suppose that 
  \begin{enumerate}
    \renewcommand{\labelenumi}{(\theenumi)}
    \item $u_h^n$ are solutions of Eq.~\eqref{eq:scheme1},
    \item {$F(x) \in C^1(\Theta)$, $|F(x)| \le M_1$, and
      $|F'(x)| \le M_2$}. 
    \item The exact solution $u\in L^\infty(0,T;H_0^{\mu} (\Omega))$, 
      $u_{t} \in L^2(0, T;H_0^{\mu})$, and $u_{tt} \in L^2(0, T;L^2(\Omega))$, 
      where $\lambda < \mu \le s+1$.
  \end{enumerate}
  Also assume $\tau < \frac{\mathcal{B}}{20M_2^2}$. Then we have
  \begin{equation}
    \begin{aligned}
      \|u_h^k - &u^k\|_{(\alpha,\beta)}^2 \le C \tau^2 
      \left( \|u_{tt}\|^2_{0,0} + \|u_t\|^2_{0,0} \right) \\
      & + C h^{2\mu-2\lambda} 
      \big( \|u(t_0)\|^2_{\mu} + \|u(t_k)\|^2_{\mu}+ \|u_t\|_{0,\mu}^2 
        + T M_2^2 \max_{0 \le t \le T} \|u(t)\|_{\mu}^2  \big).
    \end{aligned}
  \end{equation}
\end{theorem}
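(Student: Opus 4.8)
The plan is to run the standard energy argument based on Galerkin orthogonality for the elliptic projection $P_h$ defined in \eqref{eq:projection}. Writing $e^n = u_h^n - u^n = \theta^n + \rho^n$ with $\theta^n = u_h^n - P_h u^n \in V_h$ and $\rho^n = P_h u^n - u^n$, the part $\rho^n$ is bounded immediately by the approximation property of $P_h$ in Lemma~\ref{lemma:ph}, so all the real work is the estimate of $\theta^n$ in the $\|\cdot\|_{(\alpha,\beta)}$-norm.

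First I would subtract the exact weak equation \eqref{eq:a} from the fully discrete scheme \eqref{eq:b} to obtain an error equation; because $a(\rho^n, v_h) = 0$, this reduces to \eqref{eq:error2} for $\theta^n$, whose right-hand side contains the time-consistency error $u_t^n - \bar\partial_t u^n$, the term $\bar\partial_t \rho^n$, and the two linearization residuals $F(u_h^{n-1}) - F(u^n)$ and $F'(u_h^{n-1})(u_h^n - u_h^{n-1})$. Choosing $v_h = \bar\partial_t\theta^n$ and invoking the telescoping identity \eqref{eq:equal} together with coercivity $a(\theta^n,\theta^n)\ge\mathcal B\|\theta^n\|_{(\alpha,\beta)}^2$ turns this into a recursion for $a(\theta^n,\theta^n)$. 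Each right-hand contribution is then controlled by Cauchy--Schwarz with the free parameter chosen as $1/2$, so that every $\|\bar\partial_t\theta^n\|^2$ piece is absorbed on the left; the two nonlinear residuals are handled via the global Lipschitz bound $|F'|\le M_2$, which reduces them to $\|u_h^{n-1}-u^n\|^2$ and $\|u_h^{n-1}-u_h^n\|^2$, and these are in turn split with the Minkowski inequality and the elementary bound $(a_1+\dots+a_m)^2\le m\sum a_i^2$ into the pieces $\|\rho\|^2$, $\|\theta\|^2$ and $\|u^{n-1}-u^n\|^2$ recorded in \eqref{eq:430}.

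The next step is to sum the recursion from $n=1$ to $k$. The delicate point is that the bound on $\|u_h^n - u_h^{n-1}\|^2$ feeds a term $\|\theta^k\|^2$ back into the right-hand side; to absorb it I would impose the step-size restriction $\tau\le\mathcal B/(20M_2^2)$, under which $\mathcal B - 10\tau M_2^2 > 0$, leaving an inequality of the form $\|\theta^k\|_{(\alpha,\beta)}^2 \le C g_k + \tau C'M_2^2\sum_{n=0}^{k-1}\|\theta^n\|_{(\alpha,\beta)}^2$. The discrete Gr\"onwall inequality, Lemma~\ref{lemma:G}, then yields $\|\theta^k\|_{(\alpha,\beta)}^2 \le C g_k\exp(C'TM_2^2)$ with a constant independent of $\tau$ and $h$.

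Finally I would estimate $g_k$ term by term. The term $a(\theta^0,\theta^0)$ is bounded by continuity of $a$ and Lemma~\ref{lemma:ph}; for the consistency term I would write $u_t^n - \bar\partial_t u^n = \tau^{-1}\int_{t_{n-1}}^{t_n}(t-t_{n-1})u_{tt}\,dt$ and use Cauchy--Schwarz to get an $O(\tau^2)\|u_{tt}\|_{0,0}^2$ bound; the $\bar\partial_t\rho^n$ term is treated by $\bar\partial_t\rho^n = \tau^{-1}\int_{t_{n-1}}^{t_n}\rho_t\,dt$, Cauchy--Schwarz and Lemma~\ref{lemma:ph}, producing $O(h^{2\mu-2\lambda})\|u_t\|_{0,\mu}^2$; the $\sum\|\rho^n\|^2$ term follows directly from Lemma~\ref{lemma:ph}; and $\sum\|u^{n-1}-u^n\|^2$ from $u^n - u^{n-1} = \int_{t_{n-1}}^{t_n} u_t\,dt$, giving $O(\tau^2)\|u_t\|_{0,0}^2$. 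Adding the $\rho^k$ estimate back in and choosing $u_h^0 = I_h u(t_0)$ to kill the initial-data term gives exactly the claimed bound. I expect the main obstacle to be the nonlinearity-generated $\|\theta^k\|^2$ term on the right --- which is what forces the CFL-type condition on $\tau$ --- and, more tediously, tracking the constants carefully so that the Gr\"onwall exponent stays independent of the discretization parameters.
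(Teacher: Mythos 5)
Your proposal is correct and follows essentially the same route as the paper's own argument: the elliptic-projection splitting $e^n=\theta^n+\rho^n$, testing with $v_h=\bar\partial_t\theta^n$, absorbing the linearization residuals via $|F'|\le M_2$ and the splitting \eqref{eq:430} under the restriction $\tau\le\mathcal{B}/(20M_2^2)$, discrete Gr\"onwall, and then the term-by-term estimate of $g_k$ with $u_h^0=I_hu(t_0)$. No substantive differences to report.
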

\mOne{
\begin{remark}
  In Theorem~\ref{the:convergence}, $\mu$ is related with smoothness of the exact solution
  $u$ and with the element type used in simulation. If we use linear
  triangular element in numerical examples, the degree of polynomials in space 
  $V_h$ is at most $1$, i.e.\ $s=1$, then $\lambda < \mu \le 2$. 
  When the exact solution $u$ is good enough, we can get $\mu = 2$,
  then the space convergence order is $2 - \lambda$.

\end{remark}}

\section{Numerical examples}\label{sec:examples}
In this section, we consider some numerical examples to demonstrate the
effectiveness of our theoretical analysis.  Here, linear triangular element is
used.
\begin{example}\label{example:rect}
  Consider the following fractional problem
  \begin{equation}\label{eq:example1}
    \left\{
    \begin{aligned}
      & \frac{\partial u}{\partial t} = 
         K_x\frac{\partial^{2\alpha}u}{\partial|x|^{2\alpha}} + 
         K_y\frac{\partial^{2\beta}u}{\partial|y|^{2\beta}} + F(u) + f(x, y, t),\\
      & u(x, y, 0) = \varphi(x, y), \quad (x,y) \in \Omega, \\
    & u(x, y, t) = 0, \quad (x,y,t) \in \partial\Omega \times (0, T],
    \end{aligned}
    \right.
  \end{equation}
  where $\Omega=(0,1)\times(0,1)$, $F(u) = -u^2$, 
  $\varphi(x, y) = 10x^2{(1-x)}^2y^2{(1-y)}^2$, and
  \begin{equation}
    \begin{aligned}
      f(x,y,t) =& -10e^{-t}x^2{(1-x)}^2y^2{(1-y)}^2 + 100e^{-2t}x^4{(1-x)}^4y^4{(1-y)}^4 \\
      &+ 10K_xc_\alpha e^{-t}y^2(1-y)^2\big(g(x, \alpha) + g(1-x, \alpha)\big) \\
      &+ 10K_yc_\beta e^{-t}x^2(1-x)^2\big(g(y, \beta) + g(1-y, \beta)\big), \\
    \end{aligned}
  \end{equation}
  \begin{equation}
    g(x,\alpha) = \frac{\Gamma(5)}{\Gamma(5-2\alpha)}x^{4-2\alpha} 
        - \frac{2\Gamma(4)}{\Gamma(4-2\alpha)}x^{3-2\alpha} 
        + \frac{\Gamma(3)}{\Gamma(3-2\alpha)}x^{2-2\alpha}.
  \end{equation}
  The exact solution of Eq.~\eqref{eq:example1} is
  $$u(x,y,t)=10e^{-t}x^2{(1-x)}^2y^2{(1-y)}^2.$$

  In this example, we take $K_x=K_y=1$, $T = 1$ and compute the numerical results
  by different $\alpha$ and $\beta$.  Two meshes used in computation are shown in
  Fig.~\ref{fig:rectanglemesh}. The results are given in
  Table~\ref{table:rect}.  As we use linear triangular element, the spatial
  convergence order should be $2-\max\{\alpha,\beta\}$. By \mMy{examining} the rates of
  convergence shown in Table~\ref{table:rect}, we notice that the spatial
  convergence orders fit those proved in Theorem~\ref{the:convergence}.  
  \mOne{The
  temporal convergence order for different norms are given in
  Table~\ref{table:rect1}.  As we can see, the convergence order of norm
  $||\cdot||_{(\alpha, \beta)}$ is close to 1.  
  These results agree with the result of theoretical analysis. }
  Besides, we plot numerical and exact solutions at $T=1$ with $h  \approx 1/40$
  in Fig.~\ref{fig:rectangleEN}, which indicates that the numerical result is a good 
  approximation of exact solution.

  \begin{figure}
    \centering
    \includegraphics[width=0.4\textwidth]{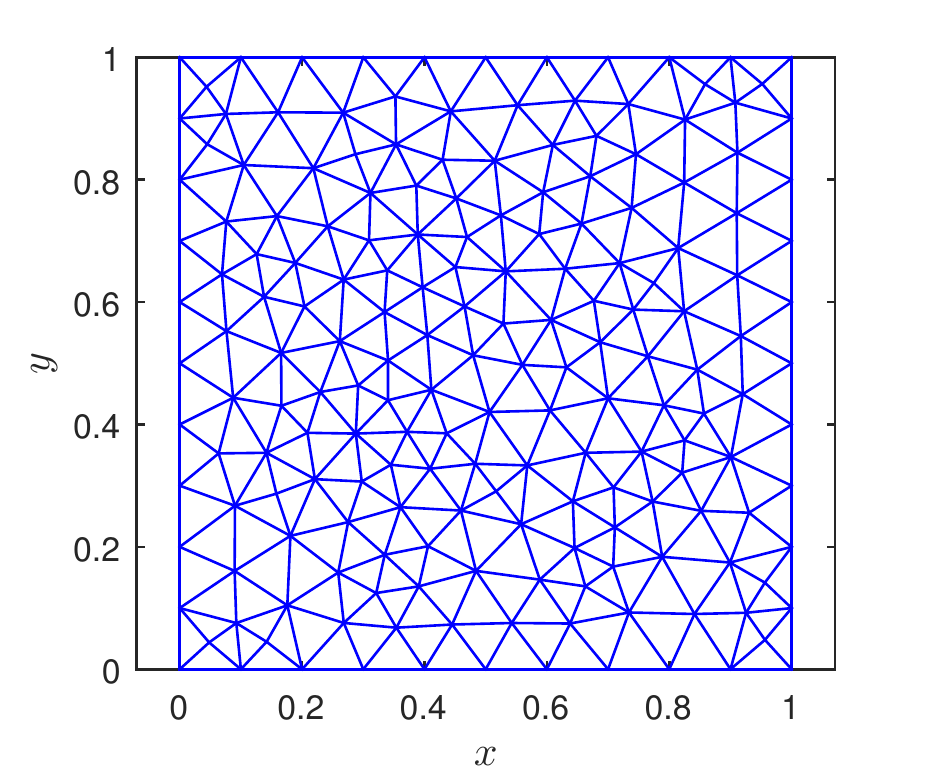} \hskip15pt
    \includegraphics[width=0.4\textwidth]{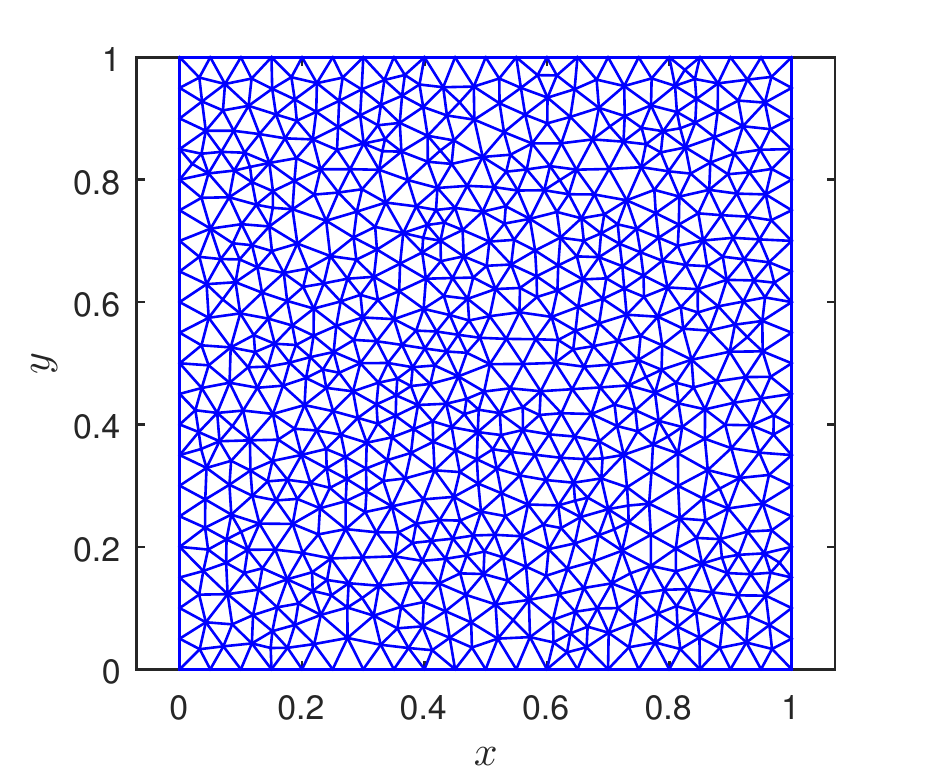}
    \caption{Meshes on rectangle domain with $h \approx 1/10$ 
      and $h\approx1/20$.}\label{fig:rectanglemesh}
  \end{figure}

  \begin{table}
    \caption{Errors and space convergence orders of BEGM for
      Example~\ref{example:rect} ($\tau = h^2$).}\label{table:rect}
    \footnotesize \centering
    \begin{tabular}{cccccccc}
      \toprule
    &   h &     $L^2$ error &     Order &   $L^\infty$ error &   Order &
        $L^{(\alpha,\beta)}$ error & Order \\
      \midrule \multirow{5}{4em}{$\alpha=0.8$\\$\beta=0.8$}
    &  1/5  &     5.01e-04 &         &     6.19e-04 &         &     1.18e-03 &         \\
    & 1/10  &     1.43e-04 &    1.81 &     1.49e-04 &    2.06 &     4.86e-04 &    1.29 \\
    & 1/20  &     3.91e-05 &    1.87 &     6.25e-05 &    1.25 &     2.20e-04 &    1.14 \\
    & 1/40  &     1.04e-05 &    1.91 &     1.95e-05 &    1.68 &     1.05e-04 &    1.06 \\
      \midrule \multirow{5}{4em}{$\alpha=0.95$\\$\beta=0.95$}
    &  1/5   &     5.50e-04 &         &     8.57e-04 &         &     2.01e-03 &         \\
    & 1/10   &     1.59e-04 &    1.79 &     1.86e-04 &    2.21 &     9.02e-04 &    1.16 \\
    & 1/20   &     4.33e-05 &    1.87 &     6.97e-05 &    1.41 &     4.28e-04 &    1.08 \\
    & 1/40   &     1.10e-05 &    1.98 &     2.07e-05 &    1.75 &     1.90e-04 &    1.17 \\
      \midrule \multirow{5}{4em}{$\alpha=0.8$\\$\beta=0.75$}
    &  1/5  &     4.97e-04 &         &     6.02e-04 &         &     1.08e-03 &         \\
    & 1/10  &     1.42e-04 &    1.81 &     1.47e-04 &    2.03 &     4.50e-04 &    1.26 \\
    & 1/20  &     3.94e-05 &    1.85 &     7.46e-05 &    0.98 &     2.10e-04 &    1.10 \\
    & 1/40  &     1.06e-05 &    1.89 &     2.06e-05 &    1.86 &     1.03e-04 &    1.03 \\
      \bottomrule
    \end{tabular} 
  \end{table}
  \begin{table}
    \caption{Errors and temporal convergence orders of BEGM for Example~\ref{example:rect} 
      with $\alpha = 0.85$, $\beta = 0.85$ ($h=\tau$).}\label{table:rect1}
    \footnotesize \centering
    \mOne{
      \begin{tabular}{ccccccc}
  \toprule
       $\tau$ &     $L^2$ error &     Order &   $L^\infty$ error &   Order & 
       $L^{(\alpha,\beta)}$ error & Order \\
  \midrule
   1/5  &4.72e-04 &         &5.60e-04 &         &1.31e-03 &         \\
  1/10  &1.38e-04 &    1.77 &1.69e-04 &    1.73 &5.84e-04 &    1.17 \\
  1/20  &3.76e-05 &    1.88 &6.91e-05 &    1.29 &2.70e-04 &    1.11 \\
  1/40  &9.76e-06 &    1.94 &2.09e-05 &    1.73 &1.22e-04 &    1.15 \\
  \bottomrule
\end{tabular}
    }
  \end{table}
  \begin{figure}
    \centering
    \includegraphics[width=0.4\textwidth]{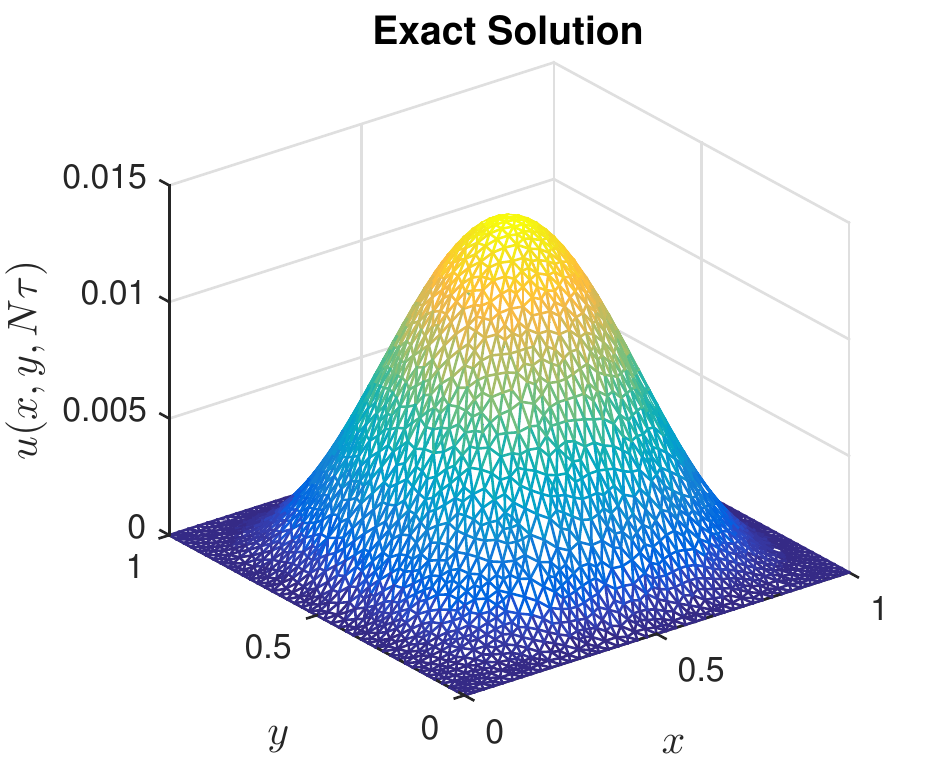} \hskip15pt
    \includegraphics[width=0.4\textwidth]{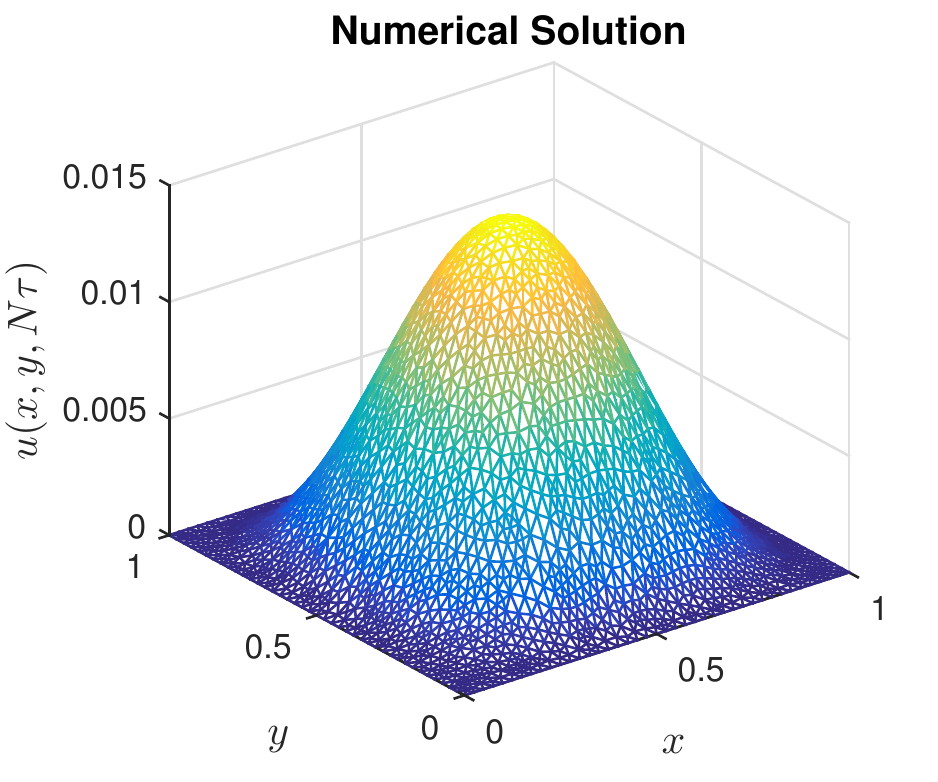}
    \caption{The exact solution and numerical approximation when $T=1$,
      $h\approx1/40$ on rectangle domain.}\label{fig:rectangleEN}
  \end{figure}
\end{example}

\begin{example}\label{example:poly}
  Consider the following fractional problem
  \begin{equation}\label{eq:poly}
    \left\{
    \begin{aligned}
      & \frac{\partial u}{\partial t} = 
         K_x\frac{\partial^{2\alpha}u}{\partial|x|^{2\alpha}} + 
         K_y\frac{\partial^{2\beta}u}{\partial|y|^{2\beta}} + F(u) + f(x, y, t),\\
      & u(x, y, 0) = \varphi(x, y), \quad (x,y) \in \Omega, \\
    & u(x, y, t) = 0, \quad (x,y,t) \in \partial\Omega \times (0, T],
    \end{aligned}
    \right.
  \end{equation}
  where the domain $\Omega$ is shown in Fig.~\ref{fig:poly}, $F(u) = -u^2$, 
  $\varphi(x, y) = 1000x^2{(1-x)}^2(x+y-1.5)^2y^2{(1-y)}^2$, and
  \begin{equation}
    \begin{aligned}
      f(x,y,t) =& -1000e^{-t}x^2{(1-x)}^2(x+y-1.5)^2y^2{(1-y)}^2 \\
      &+ 10^6e^{-2t}x^4{(1-x)}^4(x+y-1.5)^4y^4{(1-y)}^4 \\
      &+ 1000e^{-t}K_xc_\alpha y^2(1-y)^2g(x,y, \alpha) \\
      &+1000e^{-t}K_yc_\beta x^2(1-y)^2g(y,x, \beta),
    \end{aligned}
  \end{equation}
  \begin{equation}
    g(x,y, \alpha) = \left\{
    \begin{aligned}
      &g_0(x, 1.5-y, \alpha) + g_1(1-x, 1.5-y, \alpha), && y \le 0.5, \\
      &g_0(x, 1.5-y, \alpha) + g_2(1.5-y-x, 1.5-y, \alpha), && y > 0.5, 
    \end{aligned}\right.
  \end{equation}
  \begin{equation}
    \begin{aligned}
    g_0(x, y, \alpha) =&\ \frac{2 y^2 x^{2-2 \alpha }}{\Gamma (3-2 \alpha )} 
    -\frac{12 \left(y^2-y\right) x^{3-2 \alpha }}{\Gamma (4-2 \alpha )} 
    +\frac{24 \left(y^2+4 y+1\right) x^{4-2 \alpha }}{\Gamma (5-2 \alpha )}  \\
    &-\frac{240 (y+1) x^{5-2 \alpha }}{\Gamma (6-2 \alpha )}
    +\frac{720 x^{6-2 \alpha }}{\Gamma (7-2 \alpha )},
    \end{aligned}
  \end{equation}
  \begin{equation}
    \begin{aligned}
      g_1(x,y,\alpha) = &\ 
      \frac{2 \left(y^2-2 y+1\right) x^{2-2 \alpha }}{\Gamma (3-2 \alpha )}
      -\frac{12 \left(y^2-3 y+2\right) x^{3-2 \alpha }}{\Gamma (4-2 \alpha )} \\
      &+\frac{24 \left(y^2-6 y+6\right) x^{4-2 \alpha }}{\Gamma (5-2 \alpha )} 
      +\frac{240 (y-2) x^{5-2 \alpha }}{\Gamma (6-2 \alpha )} 
      + \frac{720 x^{6-2 \alpha }}{\Gamma (7-2 \alpha )},
    \end{aligned}
  \end{equation}
  \begin{equation}
    \begin{aligned}
      g_2(x,y,\alpha) = &\ 
      \frac{2 \left(y^4-2 y^3+y^2\right) x^{2-2 \alpha }}{\Gamma (3-2 \alpha )} 
      - \frac{12 \left(2 y^3-3 y^2+ y\right) x^{3-2 \alpha }}{\Gamma (4-2 \alpha )} \\
      &+\frac{24 \left(6 y^2-6 y+1\right) x^{4-2 \alpha }}{\Gamma (5-2 \alpha )}
      +\frac{240 (1-2 y) x^{5-2 \alpha }}{\Gamma (6-2 \alpha )}
      +\frac{720 x^{6-2 \alpha }}{\Gamma (7-2 \alpha )},
    \end{aligned}
  \end{equation}
  The exact solution of Eq.~\eqref{eq:poly} is
  $$u(x,y,t)=1000e^{-t}x^2{(1-x)}^2(x+y-1.5)^2y^2{(1-y)}^2.$$

  \begin{figure}
    \centering
    \includegraphics[width=0.4\textwidth]{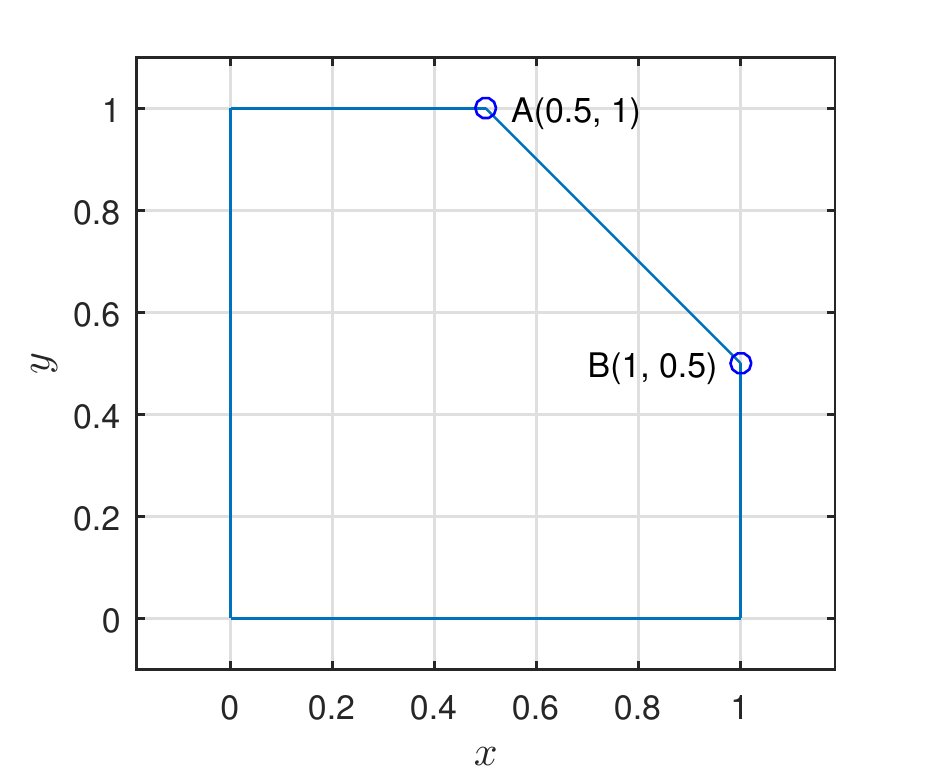} \hskip15pt
    \includegraphics[width=0.4\textwidth]{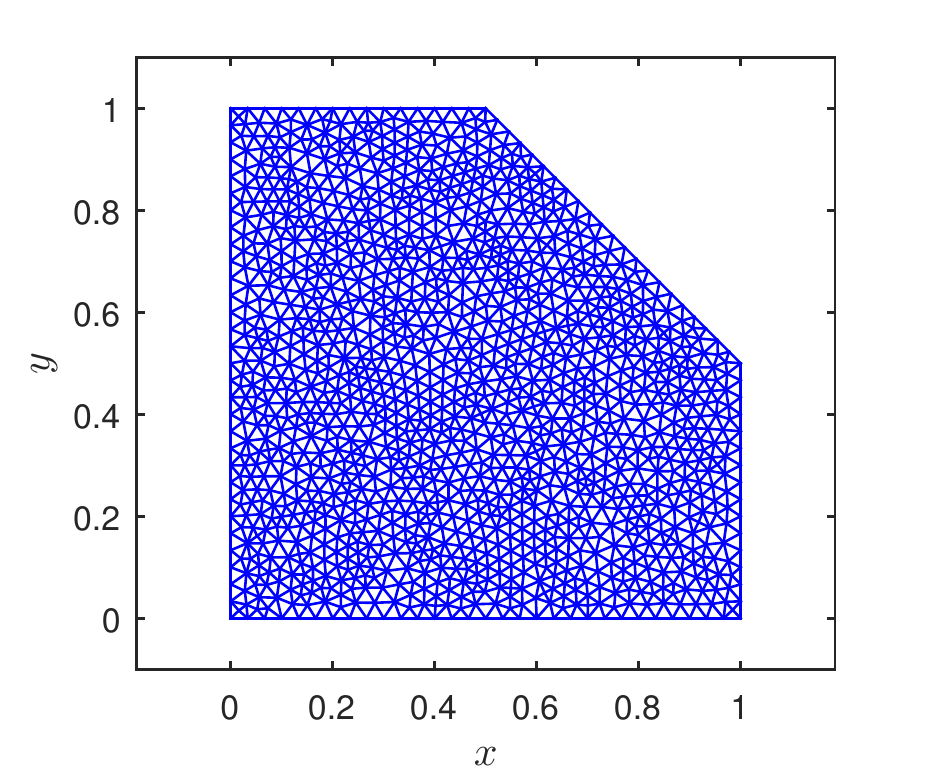}
    \caption{Poly domain and mesh with $h \approx 1/30$.}\label{fig:poly}
  \end{figure}
  \begin{table}
    \caption{Errors and spatial convergence orders of BEGM for
      Example~\ref{example:poly}($\tau=h^2$).}\label{table:poly1}
    \footnotesize \centering
    \mOne{
    \begin{tabular}{cccccccc}
      \toprule
      &   h &     $L^2$ error &     Order &   $L^\infty$ error &   Order & 
      $L^{(\alpha,\beta)}$ error & Order \\
      \midrule \multirow{5}{4em}{$\alpha=0.8$\\ $\beta = 0.8$}
        &  1/5     &     2.26e-02 &         &     2.66e-02 &         &     6.77e-02 &         \\
        & 1/10     &     7.76e-03 &    1.54 &     1.30e-02 &    1.04 &     3.50e-02 &    0.95 \\
        & 1/20     &     2.24e-03 &    1.79 &     5.37e-03 &    1.27 &     1.54e-02 &    1.19 \\
        & 1/30     &     9.47e-04 &    2.13 &     2.84e-03 &    1.57 &     9.32e-03 &    1.23 \\
      \midrule \multirow{5}{4em}{$\alpha=0.95$\\ $\beta = 0.95$}
     &  1/5     &     2.51e-02 &         &     3.73e-02 &         &     1.20e-01 &         \\
     & 1/10     &     8.41e-03 &    1.58 &     1.41e-02 &    1.40 &     6.45e-02 &    0.90 \\
     & 1/20     &     2.46e-03 &    1.77 &     6.50e-03 &    1.12 &     3.05e-02 &    1.08 \\
     & 1/30     &     1.03e-03 &    2.14 &     3.55e-03 &    1.49 &     1.89e-02 &    1.17 \\
       \midrule \multirow{5}{4em}{$\alpha=0.8$\\ $\beta = 0.7$}
     &  1/5     &     2.24e-02 &         &     2.59e-02 &         &     5.32e-02 &         \\
     & 1/10     &     7.67e-03 &    1.55 &     1.12e-02 &    1.21 &     2.81e-02 &    0.92 \\
     & 1/20     &     2.27e-03 &    1.76 &     4.50e-03 &    1.31 &     1.25e-02 &    1.17 \\
     & 1/30     &     9.67e-04 &    2.10 &     2.30e-03 &    1.65 &     7.62e-03 &    1.23 \\
      \bottomrule
    \end{tabular}}
  \end{table}
  \begin{table}
    \caption{Errors and temporal convergence orders of BEGM for
      Example~\ref{example:poly} with $\alpha = 0.8$, $\beta = 0.9$
      ($h=\tau$).}\label{table:poly2}
    \footnotesize \centering
    \mOne{
    \begin{tabular}{ccccccc}
    \toprule
    $\tau$ &     $L^2$ error &     Order &   $L^\infty$ error &   Order & 
     $L^{(\alpha,\beta)}$ error & Order \\
    \midrule
      1/10 &7.98e-03 &         &1.49e-02 &         &4.92e-02 &         \\
      1/20 &2.25e-03 &    1.83 &6.75e-03 &    1.14 &2.28e-02 &    1.11 \\
      1/30 &9.64e-04 &    2.09 &3.85e-03 &    1.38 &1.44e-02 &    1.14 \\
      1/40 &5.71e-04 &    1.82 &2.48e-03 &    1.54 &1.05e-02 &    1.08 \\
    \bottomrule
  \end{tabular}}
\end{table}
  In this test, we take $K_x=1, K_y=2$, $T = 1$.
   We compute the $L^2(\Omega)$ errors, the $L^\infty(\Omega)$ errors,
  the $||\cdot||_{(\alpha, \beta)}$ errors, and the spatial convergence orders.
  Fig.~\ref{fig:poly} shows the mesh used in computation with $h\approx 1/16$.
  The results are given in Table~\ref{table:poly1}. 
  As shown in the table, 
  the numerical results agree well with Theorem~\ref{the:convergence}.
  We also give the temporal convergence order in Table~\ref{table:poly2}. 
  As we can see from the table, the numerical orders of $L^{(\alpha,\beta)}$ error agree well with 
  the analysis result.
  Also, we plot numerical and exact solutions at $T=1$ with $h  \approx 1/30$
  in Fig.~\ref{fig:poly1}, which appears that the numerical result is a good 
  approximation of the exact solution.

\begin{figure}
  \centering
  \includegraphics[width=0.4\textwidth]{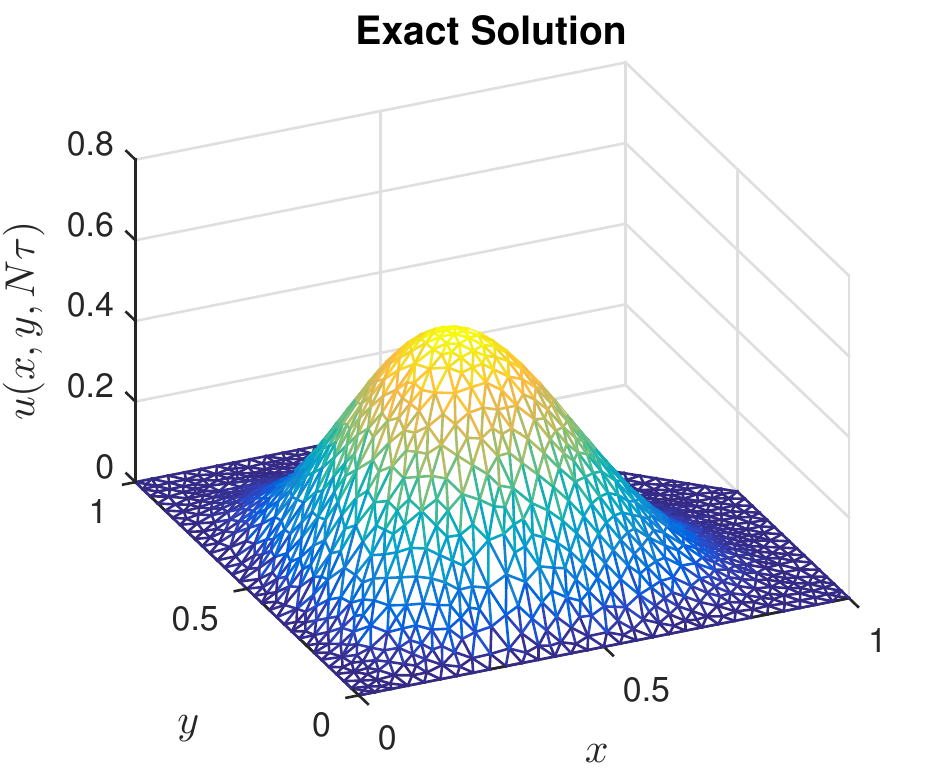} \hskip15pt
  \includegraphics[width=0.4\textwidth]{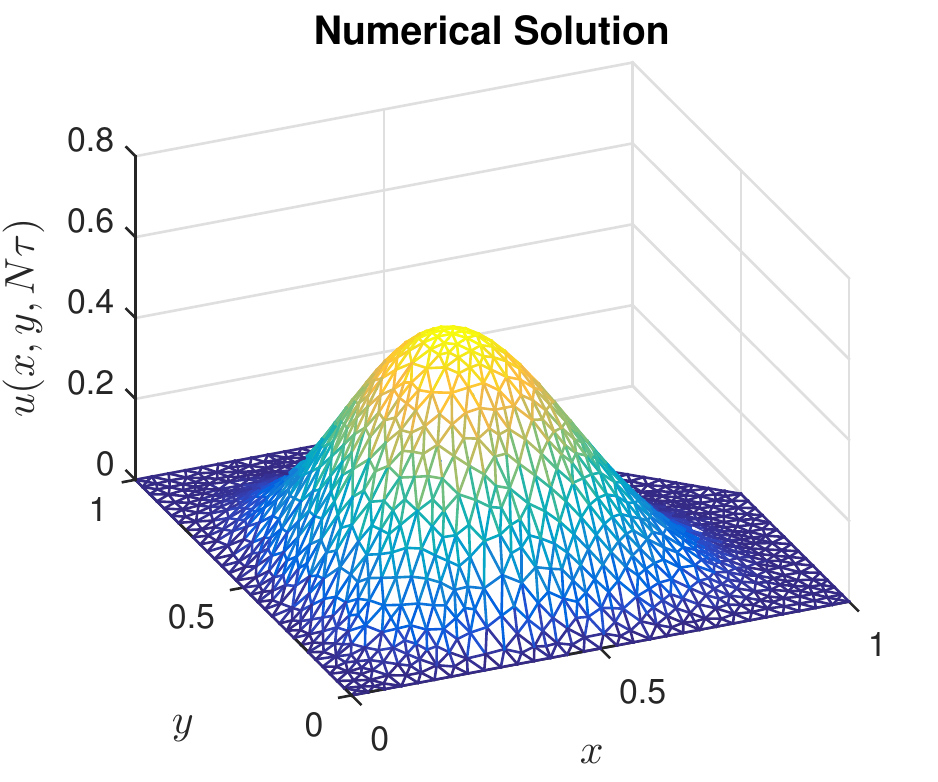}
  \caption{The exact solution and numerical approximation when $T=1$, 
  $h\approx1/30$ on poly domain.}\label{fig:poly1}
\end{figure}

\end{example}

\begin{example}\label{example:ell}
  Consider the following fractional model
  \begin{equation}\label{eq:ell}
    \left\{
    \begin{aligned}
      & \frac{\partial u}{\partial t} = 
         K_x\frac{\partial^{2\alpha}u}{\partial|x|^{2\alpha}} + 
         K_y\frac{\partial^{2\beta}u}{\partial|y|^{2\beta}} + F(u) + f(x, y, t),\\
      & u(x, y, 0) = \varphi(x, y), \quad (x,y) \in \Omega, \\
    & u(x, y, t) = 0, \quad (x,y,t) \in \partial\Omega \times (0, T],
    \end{aligned}
    \right.
  \end{equation}
  where $\Omega=\{(x,y):\frac{x^2}{a^2}+\frac{y^2}{b^2}<1\}$, $a>0$, $b>0$, $F(u) = -u^2$, 
  $\varphi(x, y) = 100\left(b^2 x^2+a^2 y^2-a^2 b^2\right)^2$, and
  \begin{equation}
    \begin{aligned}
      f(x,y,t) =& -100e^{-t}{(x^2+y^2-r^2)}^2 + 10^4e^{-2t}{(x^2+y^2-r^2)}^4 \\
        &+100K_xc_\alpha e^{-t}h(x+a\sqrt{1-{y^2}/{b^2}},      -\sqrt{1-{y^2}/{b^2}}) \\ 
        &+ 100K_xc_\alpha e^{-t}h(a\sqrt{1-{y^2}/{b^2}}-x, \sqrt{1-{y^2}/{b^2}})  \\
        &+100K_yc_\beta e^{-t} g(-\sqrt{1-{x^2}/{a^2}}, y+b\sqrt{1-{x^2}/{a^2}})  \\
        &+ 100K_yc_\beta e^{-t} g(\sqrt{1-{x^2}/{a^2}},   b\sqrt{1-{x^2}/{a^2}}-y)  \\
    \end{aligned}
  \end{equation}
  \begin{equation}
    h(x,y) = \frac{8 a^2 b^4 y^2 x^{2-2 \alpha }}{\Gamma (3-2 \alpha )}+\frac{24 a b^4
   y x^{3-2 \alpha }}{\Gamma (4-2 \alpha )}+\frac{24 b^4 x^{4-2 \alpha
   }}{\Gamma (5-2 \alpha )}
  \end{equation}
  \begin{equation}
    g(x,y) = \frac{8 a^4 b^2 x^2 y^{2-2 \beta }}{\Gamma (3-2 \beta )}+\frac{24 a^4 b
   x y^{3-2 \beta }}{\Gamma (4-2 \beta )}+\frac{24 a^4 y^{4-2 \beta
   }}{\Gamma (5-2 \beta )}
  \end{equation}
  The exact solution to Eq.~\eqref{eq:ell} is
  $$u(x,y,t)=100e^{-t}{(b^2 x^2+a^2 y^2-a^2 b^2)}^2.$$ 
  
  In this example, set $a = 1/2$, $b = 3/4$, then the domain is an ellipse. 
  Choose parameters $K_x=2$, $K_y=2$, $T=1$.  
  Table~\ref{table:ell} shows the spatial convergence orders. 
  As $\alpha$, $\beta$ increase, the convergence order of $||\cdot||_{(\alpha, \beta)}$ 
  errors decrease and the orders are close to $2-\max\{\alpha, \beta\}$. 
  These results agree well with what we have proved in Theorem~\ref{the:convergence}.
  For the temporal direction, we give the results in Table~\ref{table:ell1}.
  Fig.~\ref{fig:ellmesh} presents the computational domain
  and the mesh used in this example with $h\approx 1/30$
  and Fig.~\ref{fig:ell1} gives a comparison between exact solution and numerical solution.
  These results shows that our algorithm also works well in elliptical domain.
\begin{figure}
  \centering
   \includegraphics[width=0.4\textwidth]{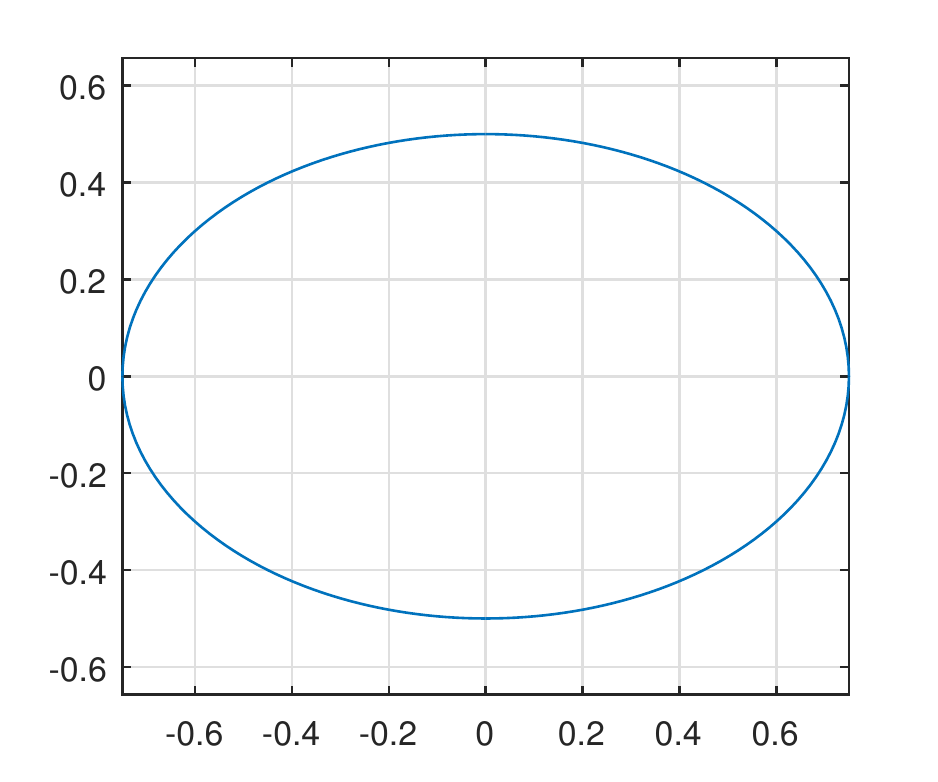}
    \hskip15pt
    \includegraphics[width=0.4\textwidth]{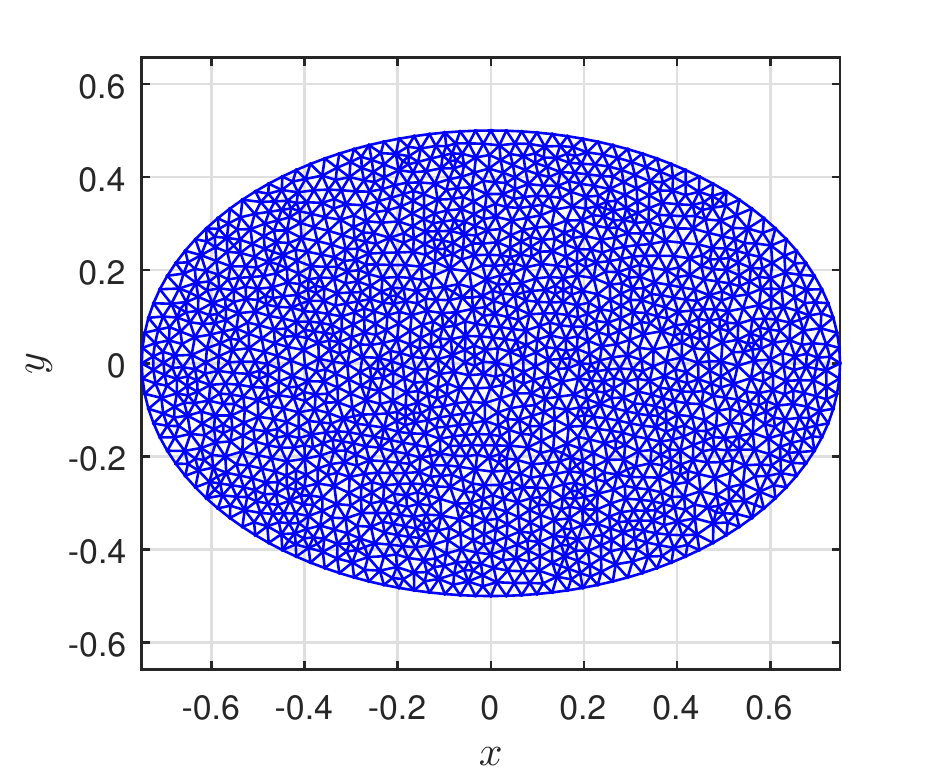}
   \caption{Elliptical  domain and mesh with  $h\approx1/30$.}\label{fig:ellmesh}
\end{figure}
\begin{figure}
  \centering
    \includegraphics[width=0.4\textwidth]{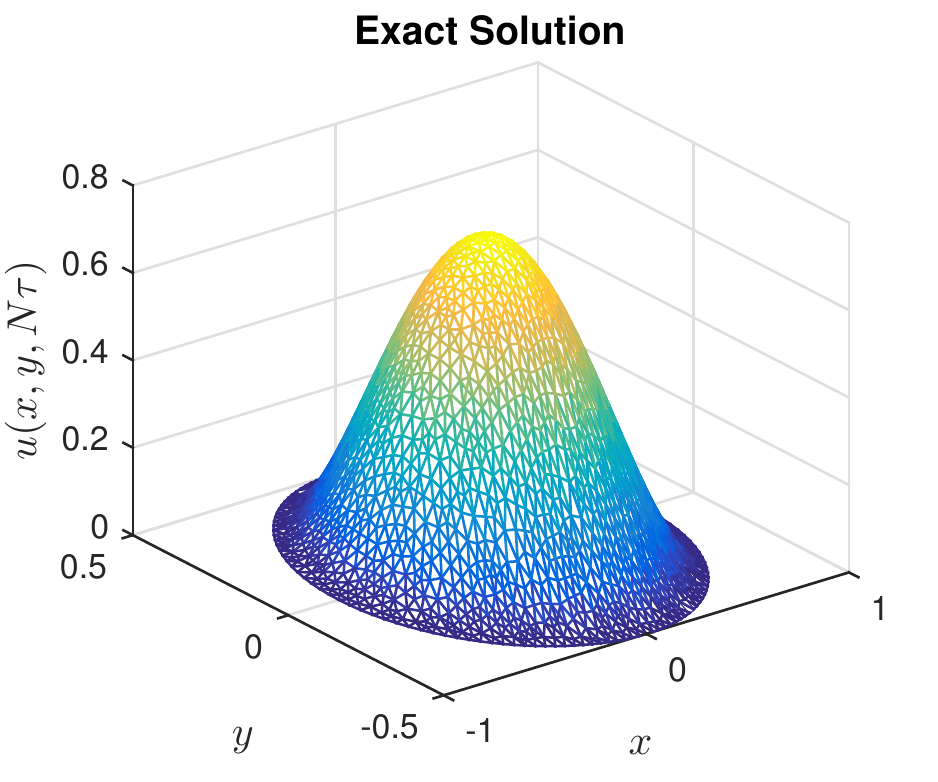}
    \hskip15pt
    \includegraphics[width=0.4\textwidth]{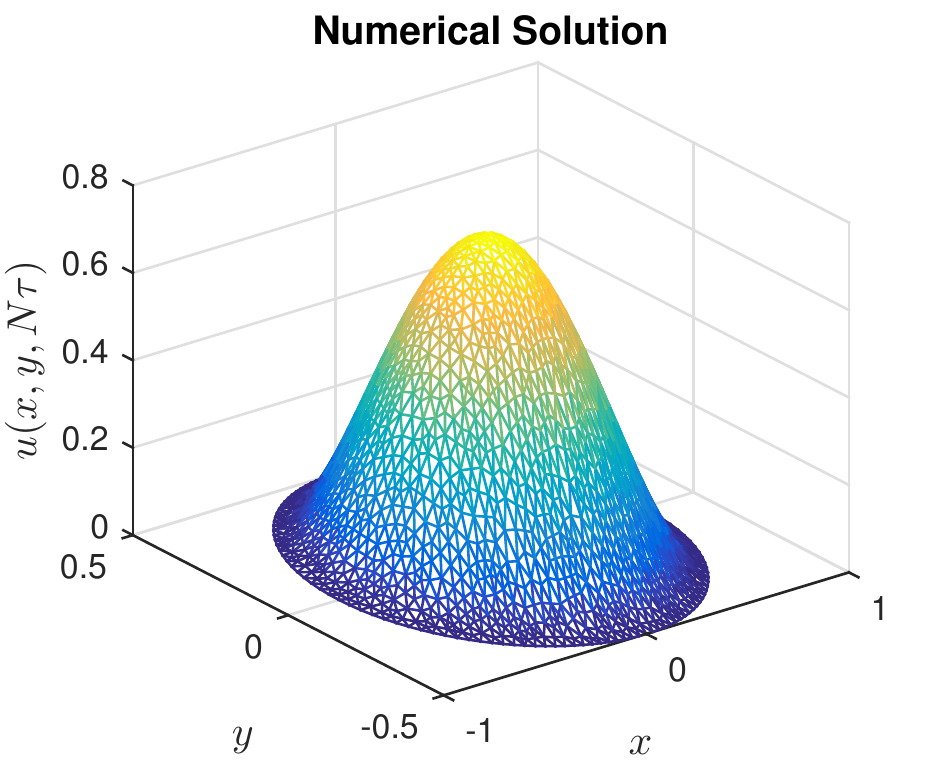}
  \caption{The exact solution and numerical approximation when $T=1$, 
  $h\approx1/30$ on elliptical domain.}\label{fig:ell1}
\end{figure}
  \begin{table}
  \caption{Errors and space convergence orders of BEGM for Example~\ref{example:ell}
  ($\tau = h^2$).}\label{table:ell}
  \footnotesize
  \centering
  \begin{tabular}{cccccccc}
    \toprule
      &   h &     $L^2$ error &     Order &   $L^\infty$ error &   Order & $L^{(\alpha,\beta)}$ error & Order \\
    \midrule \multirow{5}{4em}{$\alpha=0.85$\\$\beta=0.85$}
  &       1/5    &     2.80e-02 &         &     2.45e-02 &         &     9.63e-02 &         \\
  &      1/10    &     7.73e-03 &    1.85 &     8.93e-03 &    1.46 &     4.38e-02 &    1.14 \\
  &      1/20    &     2.02e-03 &    1.93 &     3.14e-03 &    1.51 &     1.89e-02 &    1.21 \\
  &      1/30    &     9.15e-04 &    1.96 &     1.55e-03 &    1.75 &     1.21e-02 &    1.11 \\
    \midrule
    \multirow{5}{4em}{$\alpha=0.95$\\$\beta=0.95$}
  &       1/5 &     3.06e-02 &         &     3.08e-02 &         &     1.40e-01 &         \\
  &      1/10 &     8.32e-03 &    1.88 &     9.52e-03 &    1.70 &     6.68e-02 &    1.07 \\
  &      1/20 &     2.18e-03 &    1.93 &     3.43e-03 &    1.47 &     2.99e-02 &    1.16 \\
  &      1/30 &     9.79e-04 &    1.97 &     1.70e-03 &    1.74 &     1.91e-02 &    1.10 \\
    \midrule
    \multirow{5}{4em}{$\alpha=0.8$\\$\beta=0.75$}
   &       1/5 &     2.74e-02 &         &     2.26e-02 &         &     7.97e-02 &         \\
   &      1/10 &     7.69e-03 &    1.83 &     8.87e-03 &    1.35 &     3.36e-02 &    1.25 \\
   &      1/20 &     2.03e-03 &    1.92 &     3.11e-03 &    1.51 &     1.48e-02 &    1.18 \\
   &      1/30 &     9.37e-04 &    1.91 &     1.71e-03 &    1.48 &     1.01e-02 &    0.94 \\
    \bottomrule
  \end{tabular} 
\end{table}

\begin{table}
\caption{Errors and temporal convergence orders of BEGM for Example~\ref{example:ell} with $\alpha = 0.85$, $\beta = 0.85$ ($h=\tau$).}\label{table:ell1}
\footnotesize
\centering
\mOne{
  \begin{tabular}{ccccccc}
  \toprule
     $\tau$ &     $L^2$ error &     Order &   $L^\infty$ error &   Order & $L^{(\alpha,\beta)}$ error & Order \\
  \midrule
   1/5 & 1.04e-02 &          & 8.70e-03 &          & 3.63e-02 &          \\
  1/10 & 2.95e-03 &     1.82 & 3.65e-03 &     1.25 & 1.73e-02 &     1.07 \\
  1/20 & 7.69e-04 &     1.94 & 1.29e-03 &     1.51 & 7.56e-03 &     1.19 \\
  1/30 & 3.40e-04 &     2.01 & 6.33e-04 &     1.75 & 4.83e-03 &     1.10 \\
  \bottomrule
\end{tabular}
}
\end{table}
\end{example}

\begin{example}
  Consider the fractional FitzHugh-Nagumo problem
  \begin{equation}\label{eq:exampleFN}
    \left\{
    \begin{aligned}
      & \frac{\partial u}{\partial t} = 
         K_x\frac{\partial^{2\alpha}u}{\partial|x|^{2\alpha}} + 
         K_y\frac{\partial^{2\beta}u}{\partial|y|^{2\beta}} + u(1-u)(u-\mu) - w,\\
      &\frac{\partial w}{\partial t} = \epsilon(\lambda u - \gamma w - \delta),\quad (x,y,t) \in \partial\Omega \times T, \\
      & \Omega=\{(x,y): (x-r)^2+(y-r)^2<r^2\},
    \end{aligned}
    \right.
  \end{equation}
  where $r=1.25$, $\mu = 0.1$, $\epsilon = 0.01$,
  $\lambda = 0.5$, $  \gamma = 0.1$, $\delta = 0$. The initial conditions are taken as
  \begin{equation}
    u(x, y, 0) = \left\{
      \begin{aligned}
        1, &\quad x < r, y < r \\
        0, &\quad elsewhere
      \end{aligned}\right.,
    \quad
    w(x, y, 0) = \left\{
      \begin{aligned}
        0.1, &\quad y\ge r \\
        0,   &\quad elsewhere
      \end{aligned}\right.,
  \end{equation}
  \mOne{and the boundary conditions are homogeneous.}
  For this coupled differential equation, we first solve the fractional Riesz space 
  nonlinear equation by given $w$ and $u$, then solve the ordinary differential equation
  with $w$ and new $u$ at each time step. 
  
  The simulation results with $K_x=K_y=0.0001$ and $K_x = 4K_y=0.00025$ at
  $T=1000$ are show in Fig.~\ref{fig:fhn_1} and Fig.~\ref{fig:fhn_2},
  respectively.  From Fig.~\ref{fig:fhn_1}, we notice that as fractional
  orders $\alpha,\beta$ decrease, the wave travels more slowly.
  Fig.~\ref{fig:fhn_2} shows anisotropic diffusion with different
  coefficients in spatial dimensions. In this situation, the wave behave
  different \mMy{velocities} in spatial directions. These results reported
  in \cite{Liu2015} are consistent with our results.  In addition, Zeng
  et al.\ \cite{Zeng2014} solved this problem in rectangle domain, and their
results are similar with our results.

\begin{figure}
  \centering
  \subfloat[$\alpha=0.75,\beta=0.75$]{
    \begin{minipage}[t]{0.3\textwidth}
      \includegraphics[width=0.9\textwidth]{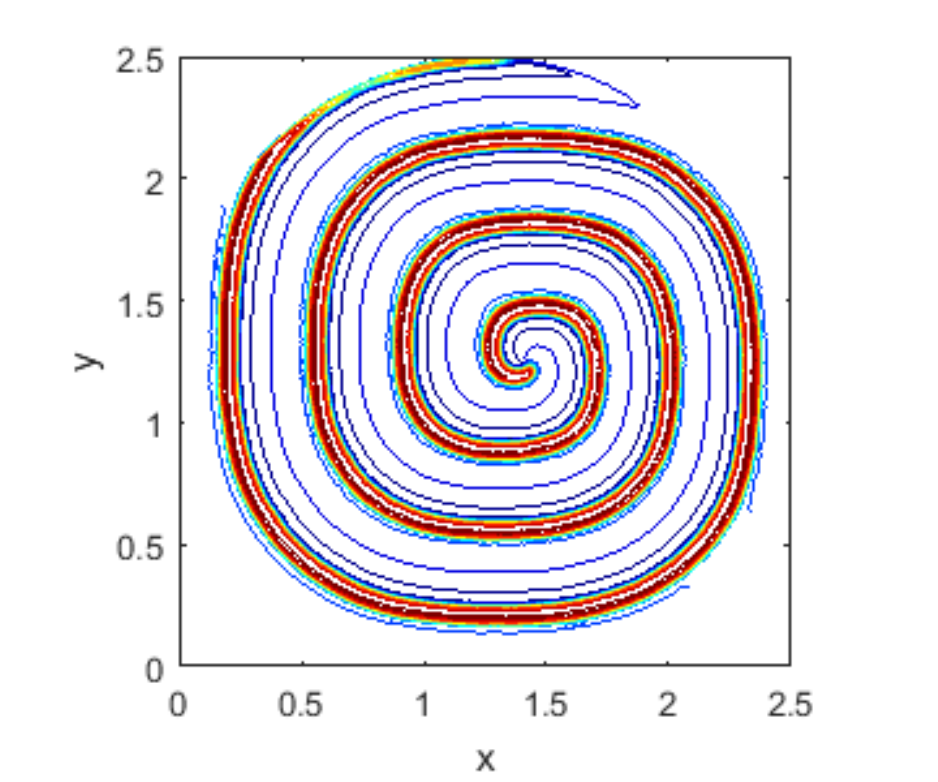} \\
      \includegraphics[width=0.9\textwidth]{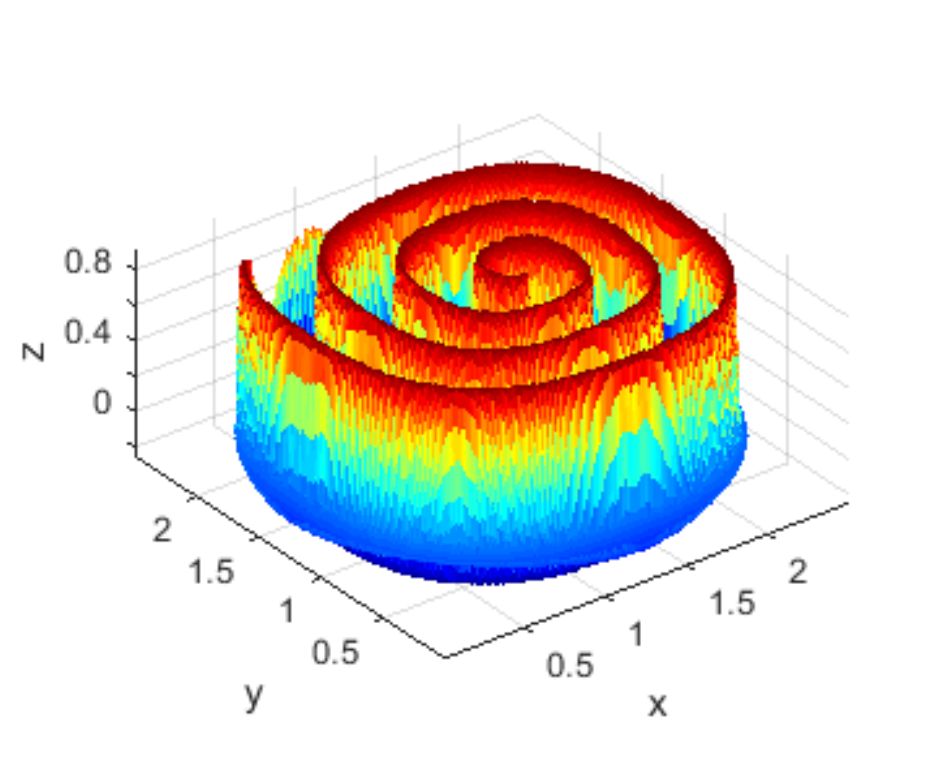}
    \end{minipage}}
  \subfloat[$\alpha=0.85,\beta=0.85$]{
    \begin{minipage}[t]{0.3\textwidth}
      \includegraphics[width=0.9\textwidth]{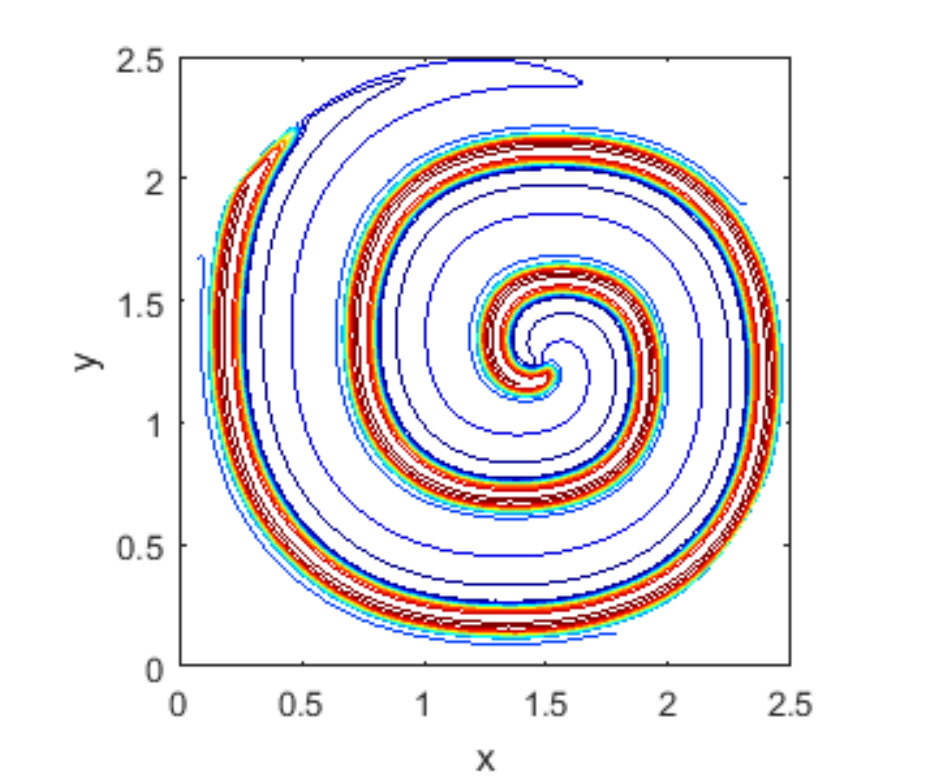} \\
      \includegraphics[width=0.9\textwidth]{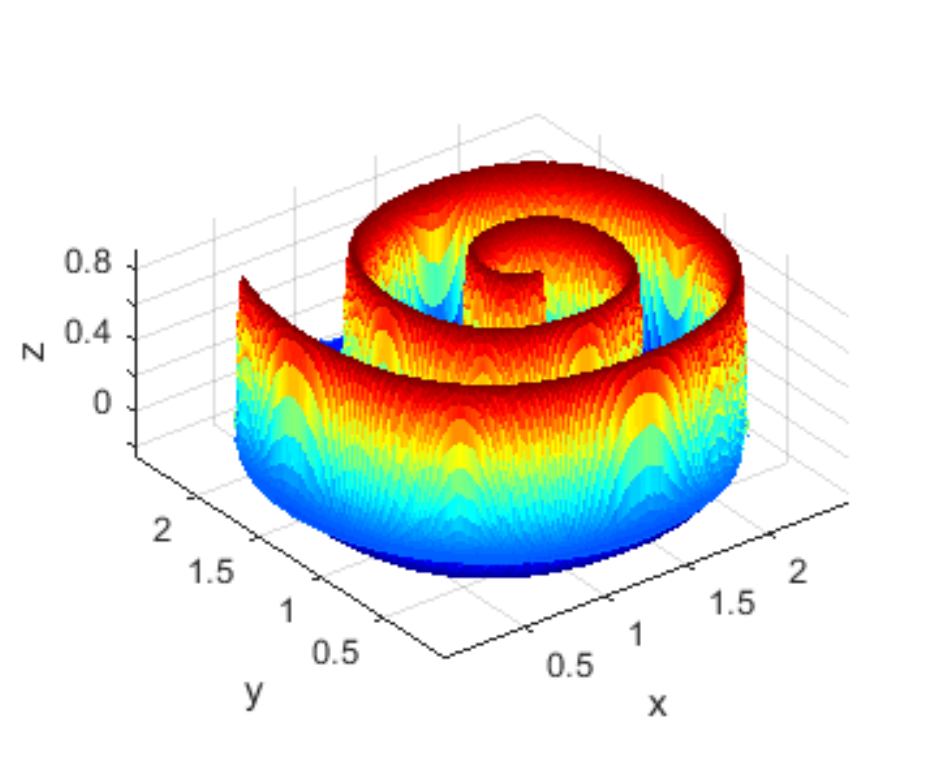}
    \end{minipage}}
  \subfloat[$\alpha=1,\beta=1$]{
    \begin{minipage}[t]{0.3\textwidth}
      \includegraphics[width=0.9\textwidth]{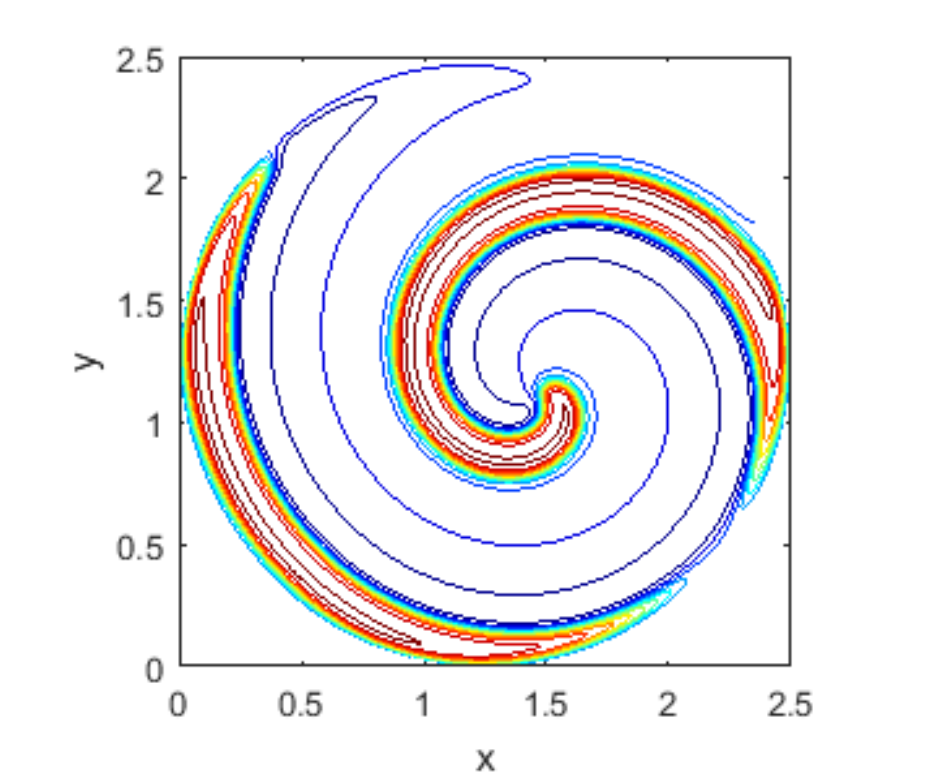} \\
      \includegraphics[width=0.9\textwidth]{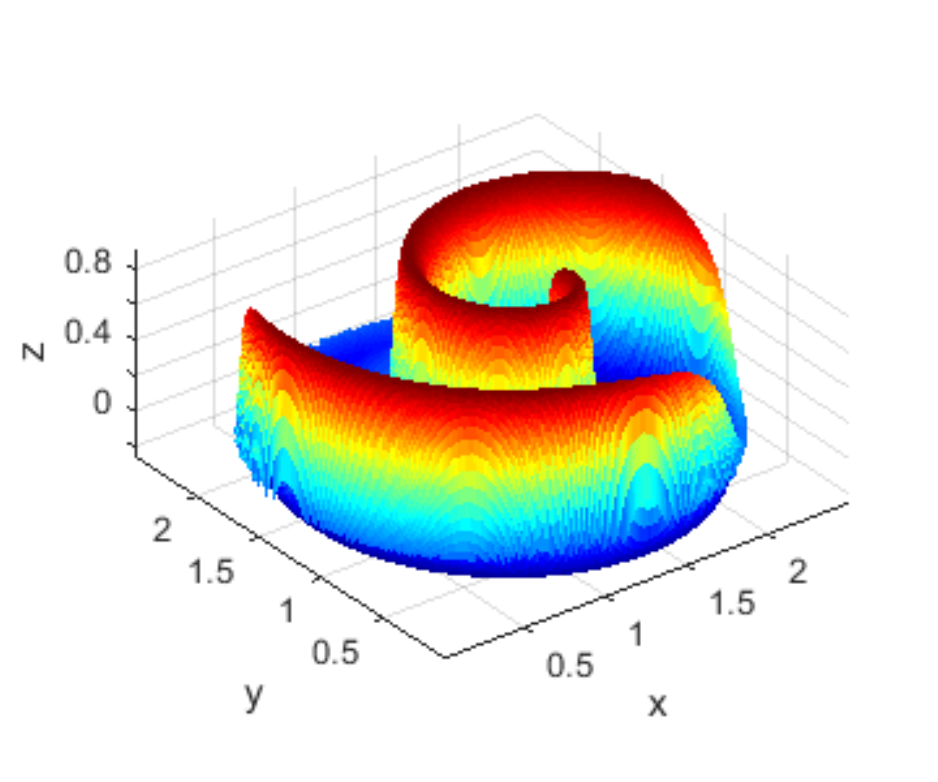}
    \end{minipage}}
  \caption{The simulation results of FitzHugh-Nagumo model when $T=1000$ 
    with $K_x=0.0001, K_y=0.0001$.}\label{fig:fhn_1}
\end{figure}
\begin{figure}
  \centering
  \subfloat[$\alpha=0.75,\beta=0.75$]{
    \begin{minipage}[t]{0.3\textwidth}
      \includegraphics[width=0.9\textwidth]{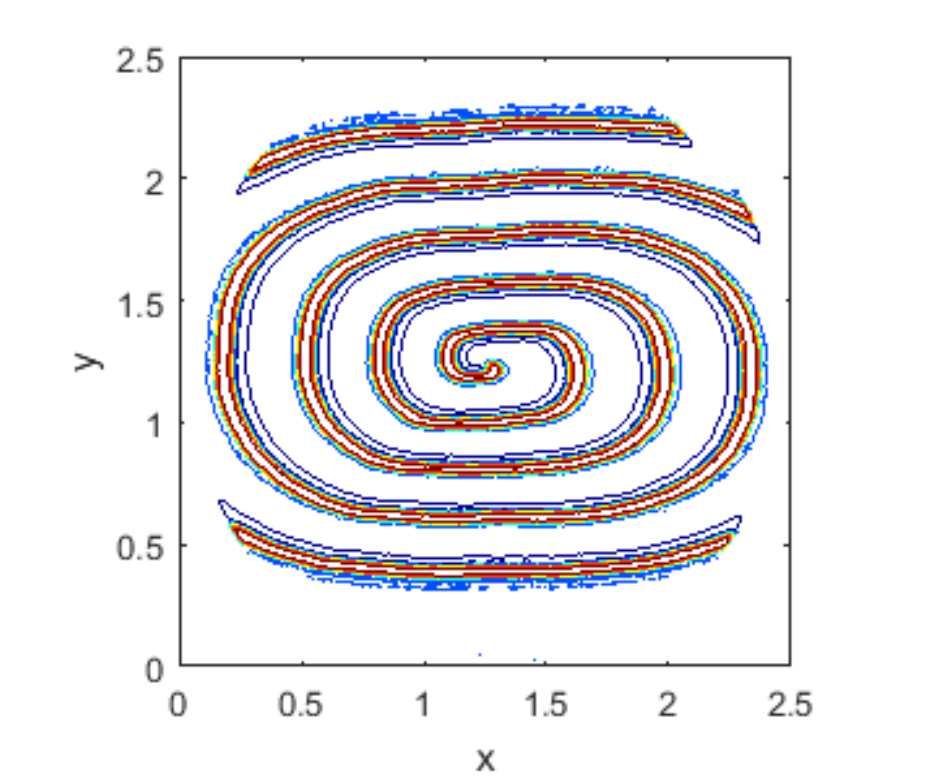} \\
      \includegraphics[width=0.9\textwidth]{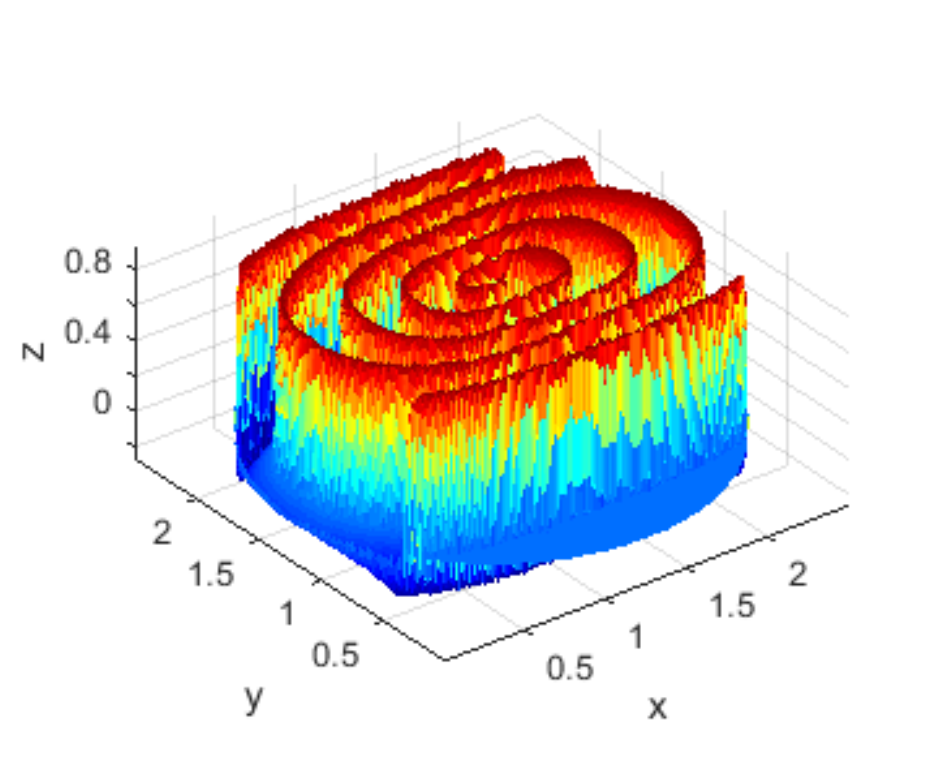}
    \end{minipage}}
  \subfloat[$\alpha=0.85,\beta=0.85$]{
    \begin{minipage}[t]{0.3\textwidth}
      \includegraphics[width=0.9\textwidth]{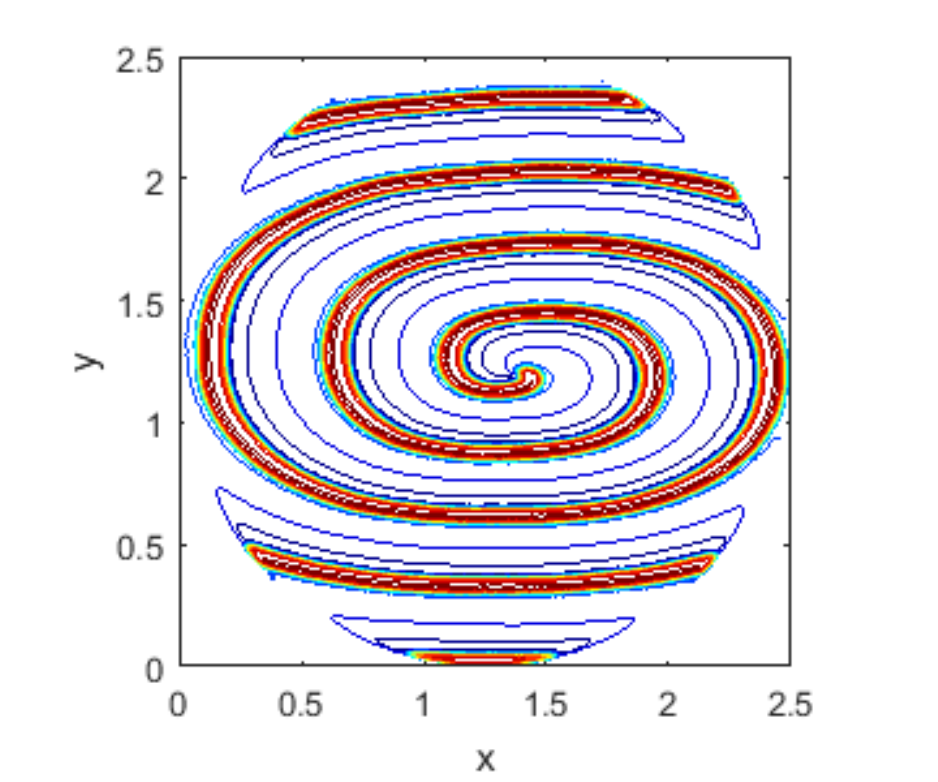} \\
      \includegraphics[width=0.9\textwidth]{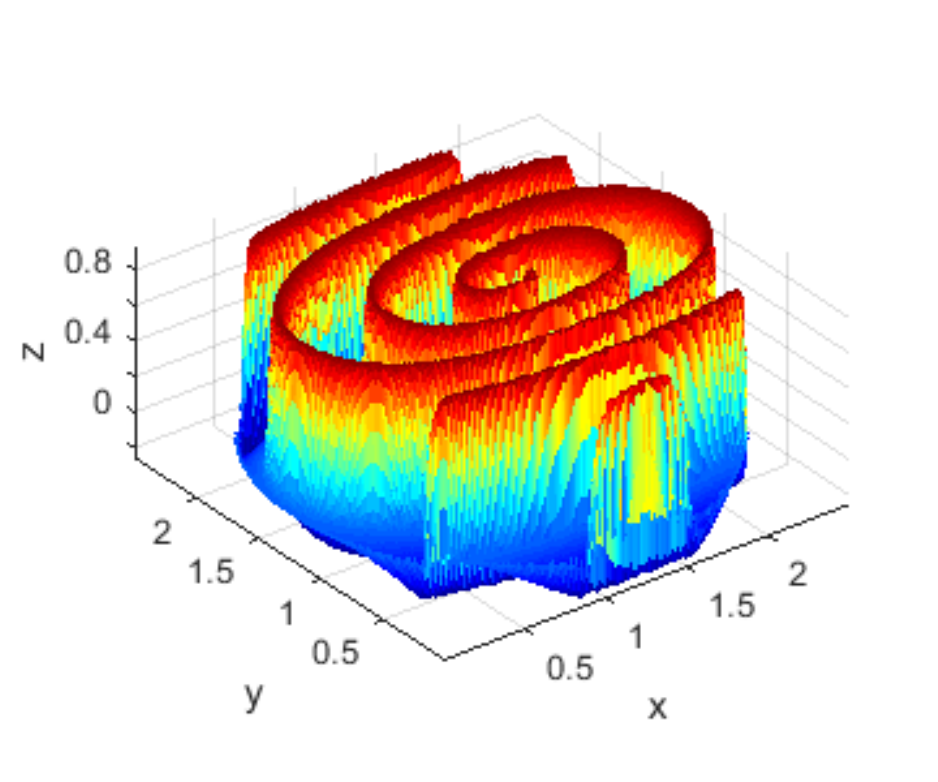}
    \end{minipage}}
  \subfloat[$\alpha=1,\beta=1$]{
    \begin{minipage}[t]{0.3\textwidth}
      \includegraphics[width=0.9\textwidth]{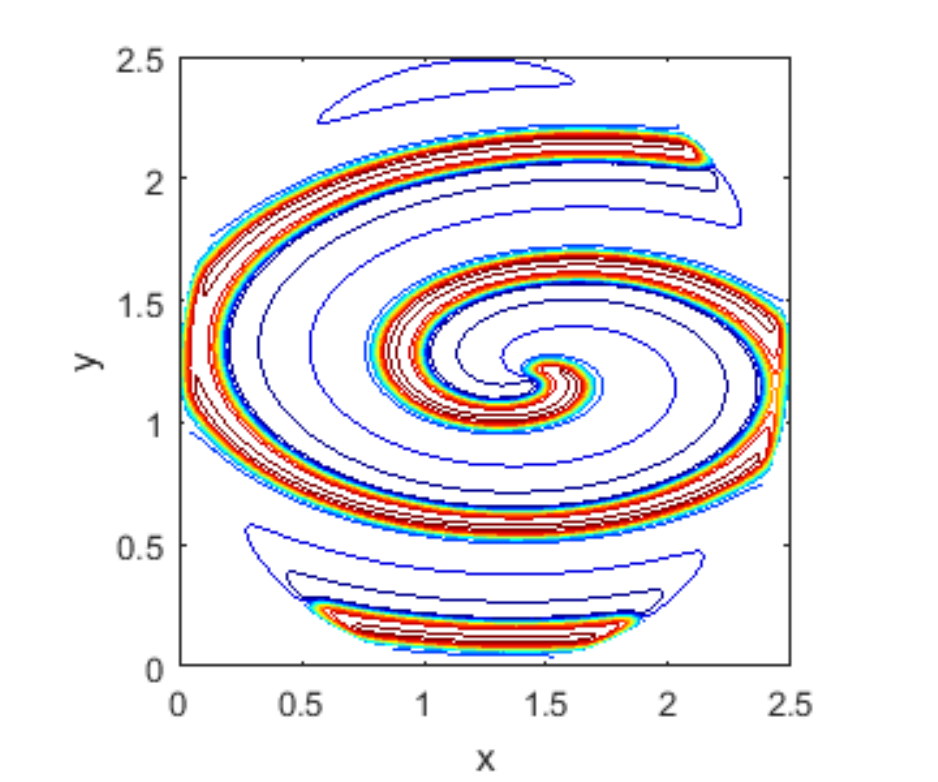} \\
      \includegraphics[width=0.9\textwidth]{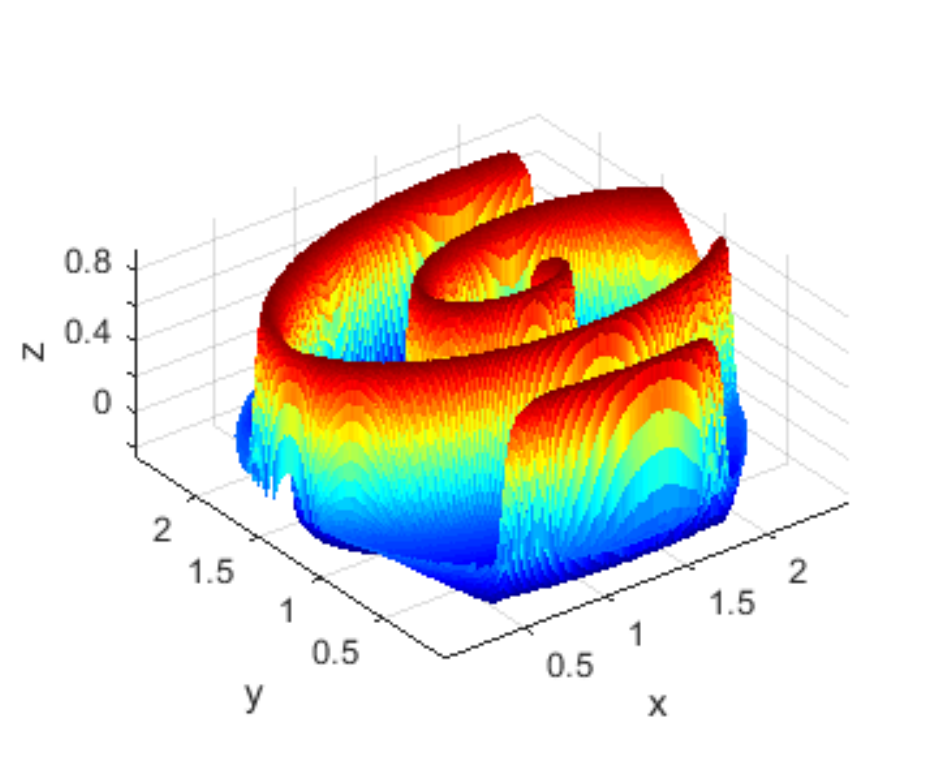}
    \end{minipage}}
  \caption{The simulation results of FitzHugh-Nagumo model when $T=1000$ 
    with $K_x=0.0001, K_y=2.5e-05$}\label{fig:fhn_2}
\end{figure}
\end{example}

\section{Conclusion}\label{sec:conclusion}
In this paper, we used Galerkin method to approach the nonlinear Riesz space
fractional diffusion equations on convex domain by approximate nonlinear term
with Taylor formula.  \mOne{This method has some advantages compared with the existing methods.}
It can be used to solve those problems on convex domain with
unstructured meshes, which is seldom solved before. 
Though it is introduced on convex domain, 
the implementation of our method can also be expanded to solve problems on non-convex domain.
And, from numerical tests, we find the linearization method is a very useful 
approach to approximate nonlinear term. \mOne{However, in the numerical tests, we have 
found computational cost of the Algorithm~\ref{algorithm1} increases nonlinearly as the
increase of elements. A simple way to speedup is using parallel algorithm  
because finding the integral paths of the Gaussian points in different
elements are independent. Other speedup methods to assembling fractional
stiffness matrix are still under investigation.}

Here, we just considered the homogeneous Dirichlet boundary conditions. 
In the following work, we will consider other boundary conditions, 
including non-homogeneous boundary conditions, Neumann boundary conditions. 
Furthermore, we will consider time-space fractional differential equations.
\section*{Acknowledgements}
This research was supported by the National Natural Science Foundation of China
(Grant No.11471262) and Project of
Scientific Research of Shaanxi (Grant No. 2015GY032).
\mMy{The authors thank the referees for their useful suggestions to improve this paper.}

\section*{Reference}
\bibliographystyle{elsarticle-num}
\bibliography{Reference.bib}

\begin{thebibliography}{10}
\expandafter\ifx\csname url\endcsname\relax
  \def\url#1{\texttt{#1}}\fi
\expandafter\ifx\csname urlprefix\endcsname\relax\def\urlprefix{URL }\fi
\expandafter\ifx\csname href\endcsname\relax
  \def\href#1#2{#2} \def\path#1{#1}\fi

\bibitem{Gorenflo2001}
R.~Gorenflo, F.~Mainardi, E.~Scalas, M.~Raberto, Mathematical Finance: Workshop
  of the Mathematical Finance Research Project, {Konstanz}, {Germany},
  {October} 5--7, 2000, Birkh{\"a}user Basel, Basel, 2001, Ch. Fractional
  Calculus and Continuous-Time Finance III : the Diffusion Limit, pp. 171--180.
\newblock \href {http://dx.doi.org/10.1007/978-3-0348-8291-0_17}
  {\path{doi:10.1007/978-3-0348-8291-0_17}}.

\bibitem{Zaslavsky2002}
G.~M. {Zaslavsky}, Chaos, fractional kinetics, and anomalous transport, Phys.
  Rep. 371 (2002) 461--580.
\newblock \href {http://dx.doi.org/10.1016/S0370-1573(02)00331-9}
  {\path{doi:10.1016/S0370-1573(02)00331-9}}.

\bibitem{Podlubny1998}
I.~Podlubny, Fractional Differential Equations, 1st Edition, Vol. 198 of
  Mathematics in Science and Engineering, Academic Press, New York, 1998.

\bibitem{Metzler2000}
R.~Metzler, J.~Klafter, The random walk's guide to anomalous diffusion: a
  fractional dynamics approach, Phys. Rep. 339~(1) (2000) 1--77.
\newblock \href {http://dx.doi.org/10.1016/S0370-1573(00)00070-3}
  {\path{doi:10.1016/S0370-1573(00)00070-3}}.

\bibitem{Kilbas2006}
A.~Kilbas, H.~M. Srivastava, J.~Trujillo, Theory and Applications of Fractional
  Differential Equations, Elsevier Science, 2006.

\bibitem{Benson2000}
D.~A. Benson, S.~W. Wheatcraft, M.~M. Meerschaert, Application of a fractional
  advection-dispersion equation, Water Resour. Res. 36 (2000) 1403--1412.
\newblock \href {http://dx.doi.org/10.1029/2000WR900031}
  {\path{doi:10.1029/2000WR900031}}.

\bibitem{Liu2013}
F.~Liu, S.~Chen, I.~Turner, K.~Burrage, V.~Anh, Numerical simulation for
  two-dimensional {R}iesz space fractional diffusion equations with a nonlinear
  reaction term, Cent. Eur. J. Phys. 11~(10) (2013) 1221--1232.
\newblock \href {http://dx.doi.org/10.2478/s11534-013-0296-z}
  {\path{doi:10.2478/s11534-013-0296-z}}.

\bibitem{Zhuang2008}
P.~Zhuang, F.~Liu, V.~Anh, I.~Turner, New solution and analytical techniques of
  the implicit numerical method for the anomalous subdiffusion equation, SIAM
  J. Numer. Anal. 46~(2) (2008) 1079--1095.
\newblock \href {http://dx.doi.org/10.1137/060673114}
  {\path{doi:10.1137/060673114}}.

\bibitem{Meerschaert2004}
M.~M. Meerschaert, C.~Tadjeran, Finite difference approximations for fractional
  advection-dispersion flow equations, J. Comput. Appl. Math. 172~(1) (2004)
  65--77.
\newblock \href {http://dx.doi.org/10.1016/j.cam.2004.01.033}
  {\path{doi:10.1016/j.cam.2004.01.033}}.

\bibitem{Wang2012}
H.~Wang, T.~S. Basu, A fast finite difference method for two-dimensional
  space-fractional diffusion equations, SIAM J. Sci. Comput. 34~(5) (2012)
  A2444--A2458.
\newblock \href {http://dx.doi.org/10.1137/12086491X}
  {\path{doi:10.1137/12086491X}}.

\bibitem{Roop2006}
J.~P. Roop, Computational aspects of {FEM} approximation of fractional
  advection dispersion equations on bounded domains in {$\mathbb{R}^2$}, J.
  Comput. Appl. Math. 193~(1) (2006) 243--268.
\newblock \href {http://dx.doi.org/10.1016/j.cam.2005.06.005}
  {\path{doi:10.1016/j.cam.2005.06.005}}.

\bibitem{Ervin2006}
V.~J. Ervin, J.~P. Roop, Variational formulation for the stationary fractional
  advection dispersion equation, Numer. Methods Partial Differential Equations
  22~(3) (2006) 558--576.
\newblock \href {http://dx.doi.org/10.1002/num.20112}
  {\path{doi:10.1002/num.20112}}.

\bibitem{Ervin2007}
V.~J. Ervin, J.~P. Roop, Variational solution of fractional advection
  dispersion equations on bounded domains in $\mathbb{R}^d$, Numer. Methods
  Partial Differential Equations 23~(2) (2007) 256--281.
\newblock \href {http://dx.doi.org/10.1002/num.20169}
  {\path{doi:10.1002/num.20169}}.

\bibitem{Jin2015}
B.~Jin, R.~Lazarov, J.~Pasciak, W.~Rundell, Variational formulation of problems
  involving fractional order differential operators, Math. Comput. 84 (2015)
  2665--2700.
\newblock \href {http://dx.doi.org/10.1090/mcom/2960}
  {\path{doi:10.1090/mcom/2960}}.

\bibitem{Chen2014a}
M.~Chen, W.~Deng, Fourth order difference approximations for space
  {Riemann-Liouville} derivatives based on weighted and shifted {Lubich}
  difference operators, Comm. Comput. Phys. 16 (2014) 516--540.
\newblock \href {http://dx.doi.org/10.4208/cicp.120713.280214a}
  {\path{doi:10.4208/cicp.120713.280214a}}.

\bibitem{Li2009}
X.~Li, C.~Xu, A space-time spectral method for the time fractional diffusion
  equation, SIAM J. Numer. Anal. 47~(3) (2009) 2108--2131.
\newblock \href {http://dx.doi.org/10.1137/080718942}
  {\path{doi:10.1137/080718942}}.

\bibitem{Li2012}
C.~Li, F.~Zeng, F.~Liu, Spectral approximations to the fractional integral and
  derivative, Fract. Calc. Appl. Anal. 15~(3) (2012) 383--406.
\newblock \href {http://dx.doi.org/10.2478/s13540-012-0028-x}
  {\path{doi:10.2478/s13540-012-0028-x}}.

\bibitem{Bueno-Orovio2014}
A.~Bueno-Orovio, D.~Kay, K.~Burrage, {Fourier} spectral methods for
  fractional-in-space reaction-diffusion equations, BIT Numer. Math. 54~(4)
  (2014) 937--954.
\newblock \href {http://dx.doi.org/10.1007/s10543-014-0484-2}
  {\path{doi:10.1007/s10543-014-0484-2}}.

\bibitem{Deng2009}
W.~Deng, \href{http://dx.doi.org/10.1137/080714130}{Finite element method for
  the space and time fractional {Fokker-Planck} equation}, SIAM J. Numer. Anal.
  47~(1) (2009) 204--226.
\newblock \href {http://arxiv.org/abs/http://dx.doi.org/10.1137/080714130}
  {\path{arXiv:http://dx.doi.org/10.1137/080714130}}, \href
  {http://dx.doi.org/10.1137/080714130} {\path{doi:10.1137/080714130}}.
\newline\urlprefix\url{http://dx.doi.org/10.1137/080714130}

\bibitem{Zhang2010}
H.~Zhang, F.~Liu, V.~Anh, {G}alerkin finite element approximation of symmetric
  space-fractional partial differential equations, Appl. Math. Comput. 217~(6)
  (2010) 2534--2545.
\newblock \href {http://dx.doi.org/http://dx.doi.org/10.1016/j.amc.2010.07.066}
  {\path{doi:http://dx.doi.org/10.1016/j.amc.2010.07.066}}.

\bibitem{Zhang2012a}
N.~Zhang, W.~Deng, Y.~Wu, \href{http://dx.doi.org/10.4208/aamm.10-m1210}{Finite
  difference/element method for a two-dimensional modified fractional diffusion
  equation}, Adv. Appl. Math. Mech. 4~(04) (2012) 496--518.
\newblock \href {http://dx.doi.org/10.4208/aamm.10-m1210}
  {\path{doi:10.4208/aamm.10-m1210}}.
\newline\urlprefix\url{http://dx.doi.org/10.4208/aamm.10-m1210}

\bibitem{Choi2012}
Y.~J. Choi, S.~K. Chung, \href{http://dx.doi.org/10.1155/2012/596184}{Finite
  element solutions for the space fractional diffusion equation with a
  nonlinear source term}, Abstr. Appl. Anal. 2012 (2012) 1--25.
\newblock \href {http://dx.doi.org/10.1155/2012/596184}
  {\path{doi:10.1155/2012/596184}}.
\newline\urlprefix\url{http://dx.doi.org/10.1155/2012/596184}

\bibitem{Deng2013b}
W.~Deng, J.~Hesthaven, \href{http://dx.doi.org/10.1051/m2an/2013091}{Local
  discontinuous {G}alerkin methods for fractional diffusion equations}, ESAIM:
  M2NA 47~(6) (2013) 1845--1864.
\newblock \href {http://dx.doi.org/10.1051/m2an/2013091}
  {\path{doi:10.1051/m2an/2013091}}.
\newline\urlprefix\url{http://dx.doi.org/10.1051/m2an/2013091}

\bibitem{Wang2013}
H.~Wang, D.~Yang, Wellposedness of variable-coefficient conservative fractional
  elliptic differential equations, SIAM J. Numer. Anal. 51~(2) (2013)
  1088--1107.
\newblock \href {http://dx.doi.org/10.1137/120892295}
  {\path{doi:10.1137/120892295}}.

\bibitem{Bu2014}
W.~Bu, Y.~Tang, J.~Yang, {G}alerkin finite element method for two-dimensional
  {R}iesz space fractional diffusion equations, J. Comput. Phys. 276 (2014)
  26--38.
\newblock \href {http://dx.doi.org/10.1016/j.jcp.2014.07.023}
  {\path{doi:10.1016/j.jcp.2014.07.023}}.

\bibitem{Bu2015}
W.~Bu, Y.~Tang, Y.~Wu, J.~Yang, {Crank--Nicolson} {ADI} {Galerkin} finite
  element method for two-dimensional fractional {FitzHugh--Nagumo} monodomain
  model, Appl. Math. Comput. 257 (2015) 355--364.

\bibitem{Bu2015a}
W.~Bu, Y.~Tang, Y.~Wu, J.~Yang, Finite difference/finite element method for
  two-dimensional space and time fractional {Bloch--Torrey} equations, J.
  Comput. Phys. 293 (2015) 264--279.

\bibitem{Qiu2015}
L.~Qiu, W.~Deng, J.~S. Hesthaven,
  \href{http://dx.doi.org/10.1016/j.jcp.2015.06.022}{Nodal discontinuous
  {Galerkin} methods for fractional diffusion equations on {2D} domain with
  triangular meshes}, J. Comput. Phys. 298 (2015) 678--694.
\newblock \href {http://dx.doi.org/10.1016/j.jcp.2015.06.022}
  {\path{doi:10.1016/j.jcp.2015.06.022}}.
\newline\urlprefix\url{http://dx.doi.org/10.1016/j.jcp.2015.06.022}

\bibitem{Du2015}
N.~Du, H.~Wang, \href{http://dx.doi.org/10.1137/15M1007458}{A fast finite
  element method for space-fractional dispersion equations on bounded domains
  in $\mathbb{R}^2$}, SIAM J. Sci. Comput. 37~(3) (2015) A1614--A1635.
\newblock \href {http://dx.doi.org/10.1137/15m1007458}
  {\path{doi:10.1137/15m1007458}}.
\newline\urlprefix\url{http://dx.doi.org/10.1137/15M1007458}

\bibitem{Roop2004}
J.~P. Roop, Variational solution of the fractional advection dispersion
  equation, Ph.D. thesis, Clemson University (2004).

\bibitem{Zeng2014}
F.~Zeng, F.~Liu, C.~Li, K.~Burrage, I.~Turner, V.~Anh, A {C}rank--{N}icolson
  {ADI} spectral method for a two-dimensional {R}iesz space fractional
  nonlinear reaction-diffusion equation, SIAM J. Numer. Anal. 52~(6) (2014)
  2599--2622.
\newblock \href {http://dx.doi.org/10.1137/130934192}
  {\path{doi:10.1137/130934192}}.

\bibitem{Quarteroni2008}
A.~Quarteroni, A.~Valli, Numerical approximation of partial differential
  equations, Vol.~23, Springer Science \& Business Media, 2008.

\bibitem{Liu2015}
F.~Liu, P.~Zhuang, I.~Turner, V.~Anh, K.~Burrage, A semi-alternating direction
  method for a {2-D} fractional {FitzHugh--Nagumo} monodomain model on an
  approximate irregular domain, J. Comput. Phys. 293 (2015) 252--263.

\end{thebibliography}
\end{document}